\newtheorem{assumption}{Assumption}
\newtheorem{definition}{Definition}
\newcommand{\e}{{\varepsilon}}
\newcommand{\DD}{{\mathbb{D}}}
\newcommand{\dd}{{\delta}}
\newcommand\old[1]{}
\title{Competitive Erosion is Conformally Invariant}
\author{Shirshendu Ganguly and Yuval Peres}
\address{Department of Statistics \\ University of California\\ Berkeley, USA.}
\email{sganguly@berkeley.edu}
\address{ Microsoft Research\\ WA, USA.}
\email{peres@microsoft.com}
\def\R{\mathbb{R}}
\def\Z{\mathbb{Z}}
\def\div{\mathop{\mathrm{div}}}
\begin{document}
\maketitle

%
%
%
%

\begin{abstract}
We study a graph-theoretic model of interface dynamics 
called {\bf competitive erosion}.
Each vertex of the graph is occupied by a particle, which can be either red or blue.  New red and blue particles are emitted alternately from their respective  bases and perform random walk. On encountering a particle of the opposite color they remove it and occupy its position. We consider competitive erosion on discretizations of `smooth', planar, simply connected, domains. The main result of this article shows that at stationarity, with high probability, the blue and the red regions are separated by the level curves of the Green function, with Neumann boundary conditions, which are orthogonal circular arcs on the disc and hyperbolic geodesics on a general, simply connected domain. This establishes \emph{conformal invariance} of the model.   
\end{abstract}

\begin{figure}[h] 
\centering
\begin{tabular}{ccc}
 \includegraphics[width=.33\textwidth]{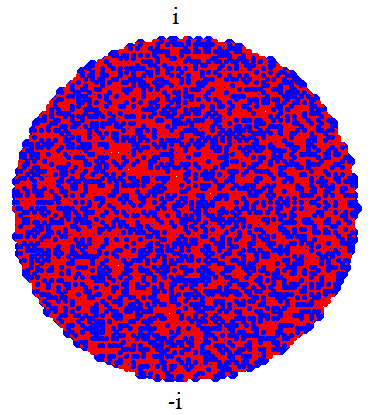}&
 \includegraphics[width=.33\textwidth]{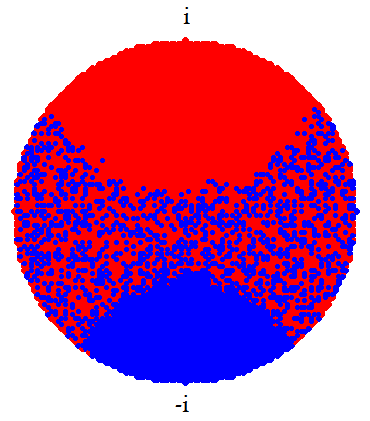} &
\includegraphics[width=.33\textwidth]{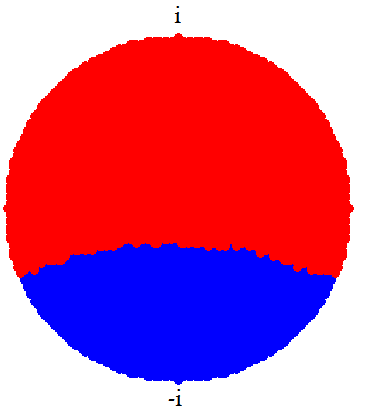} \\
$t=0$ & $t=1666$ & $t=5000$
\end{tabular}
\caption{ Competitive erosion on the unit disc, spontaneously forms an interface. $i$ and $-i$ form the red source and blue source respectively.
In the initial state each vertex of a mesh of size $\frac{1}{50}$ is independently colored blue with probability $1/3$ and red otherwise. After $5000$ time steps of competitive erosion, the red and blue regions have separated with the interface being an orthogonal circular arc.
}
\label{formation23}
\end{figure}
\tableofcontents

\section{Interface dynamics}\label{model}

In 2003, Propp introduced \emph{competitive erosion}: a graph-theoretic model of interface dynamics maintained in equilibrium by equal and opposing forces.
The model has the following underlying data:
	\begin{itemize}
	\item A finite connected graph $G=(V,E)$ with vertex set $V$ and edge set $E$.
	\item Probability measures $\mu_1$ and $\mu_2$ on $V$.  
	\item An integer $0 \leq k \leq |V|-1$.
	\end{itemize}
\emph{Competitive erosion} is a discrete-time Markov chain $\{{\rm{CE}}(t)\}_{t \geq 0}$ on the space $\left\{S\subset V:|S|=k \right\}.$
One time step is defined by
\begin{equation}\label{dynamics}
	{\rm{CE}}(t+1) = \{{\rm{CE}}(t) \cup \{X_t\}\} \setminus \{Y_t\}, 
	\end{equation}
where $X_t$ is the first site in $V\setminus{\rm{CE}}(t)$ visited by a simple random walk, whose starting point has distribution $\mu_1$; and $Y_t$ is the first site in ${\rm{CE}}(t) \cup \{X_t\}$ visited by an independent simple random walk, whose starting point has distribution $\mu_2$. We will think of every vertex in $V$ as colored either `blue' or `red' and ${\rm{CE}}(t)$ will denote the set of blue vertices, and accordingly we rename the starting measures $\mu_1$ and $\mu_2$ as $\mu_{\rm{B}}$ and $\mu_{\rm{R}}$ respectively.
If the blue and red sources $\mu_{\rm{B}}$ and $\mu_{\rm{R}}$ have well-separated supports, one expects that the dynamics separates the graph into coherent red and blue territories. 

Note that \emph{Competitive erosion} can be thought of as a competing  version of Internal Diffusion Limited Aggregation (IDLA), which is a fundamental growth model; first proposed by Meakin and Deutch in 1986,  as a model of industrial chemical processes such as electropolishing, corrosion and etching.  Competing particle systems, modeling co-existence of various species etc, have been the subject of intense study in physical sciences as well as mathematics:  competing versions of the Richardson model were considered in \cite{cr2,cr1}, processes modeling annihilation between competing species were studied in a series of works, see for e.g., \cite{bra1,bra2}. 
The study of \emph{competitive erosion} was initiated in \cite {cyl} by the authors along with Lionel Levine and James Propp, where the reader can find a detailed introduction of the process. The underlying graph considered in \cite{cyl} was the cylinder (the product of a path and a cycle). However, the original question asked by Propp (\cite{pcpropp}) was in the setting where the underlying graph is a discrete approximation to a general smooth simply connected planar domain. The process was predicted to exhibit \emph{conformal invariance}. Confirming this is the goal of the article. More on comparison with IDLA is presented in Section \ref{idla1}.

\subsection{Conformal Invariance}As mentioned above,
we consider the case, when the underlying graph is a discretization of a smooth, bounded, planar, simply connected domain $\U,$ i.e., $V=\U_n:=\frac{1}{n}\mathbb{Z}^2\cap \U$ and $k=\lfloor{\alpha |\U_n|\rfloor} $ for some fixed $\alpha \in (0,1/2]$. When the domain is the unit disc $\DD,$ thought of as a subset of the complex plane $\C,$ simulations (Figure ~\ref{formation23}) show that, if the measures $\mu_{\rm{B}}$ and $\mu_{\rm{R}}$ are delta masses  on $-i,i,$ then after running the process for some time, the blue and red regions are separated  and the blue region seems to converge to a subset of $\D$ having $\alpha$ fraction of the total area and bounded by an orthogonal circular arc. Let us call the latter region $\DD_{(\alpha)}$. It is well known that the boundary of   $\DD_{(\alpha)}$ is a level set of the Green function with Neumann boundary conditions on $\DD$ with source and sink at $-i$ and $i$ respectively. For a general domain $\U,$ with two points $x_{\rm{B}}$ and $x_{\rm{R}}$ on the boundary, one  can obtain the corresponding $\U_{(\alpha)}$ by looking at the level set enclosing $\alpha$ fraction of the area, of the corresponding Green function on $\U$ with source and sink at $x_{\rm{B}}$ and $x_{\rm{R}}$ respectively. Using conformal invariance of the Green function,  $\U_{(\alpha)}$ is related to $\DD_{(\alpha)}$ via a suitable conformal map, (see Figure \ref{conftrans}). Precise definitions are provided in the next subsection.  It was predicted by Propp in 2003, that for any reasonably regular domain $\U,$  and the competitive erosion chain on $\U_n$ with $k=\lfloor{\alpha |\U_n|\rfloor},$ and $\mu_{\rm{B}}, \mu_{\rm{R}}$  being delta masses at $x_{\rm{B}}$ and $x_{\rm{R}}$ respectively, the blue region should ``converge" to $\U_{(\alpha)},$ as $n$ grows large.

Our main result confirms a version of Propp's prediction. More formally, for technical reasons, we need $\mu_{\rm{B}}$ and $\mu_{\rm{R}}$ to be spread out and supported in the interior of $\U,$ instead of being point masses. In this article, we take them to be uniform measures on all lattice points inside small `blobs' of radii $\frac{\dd}{4}$  lying inside $\U$,and also at distance $\dd$ from the points $x_{\rm{B}}$ and $x_{\rm{R}}$ respectively (see Figure \ref{blob1} i.)
The main theorem then involves choosing $\dd $  arbitrarily small  which can be thought of as approximating  the delta masses at $x_{\rm{B}},x_{\rm{R}}.$ Proving Propp's prediction when the source measures are indeed point masses on the boundary is left as an interesting technical challenge. (For a more elaborate discussion on the associated difficulties see Section \ref{concl1}.)     
\subsection{Formal definitions and setup}\label{para}
In this section we collect all the necessary definitions and notations required for the statement of the main result.
For any two points $x,y \in \C,$ (also thought of as $\R^2$) we use $d(x,y)$ to denote the Euclidean distance between them.  For sets $A, B \subset \C,$ 
let $$d(A,B)=\inf_{x\in A, y \in B}d(x,y).$$
 $\DD$ will be used to denote the unit disc centered at the origin in the complex plane. $B(x,r )$ denotes the open euclidean ball of radius $r$ with center $x$.
For  a bounded, simply connected, planar domain, $\U\subset \C$ we will say ``$\U $ is smooth",  to mean that  the boundary of $\U$ i.e., $\overline \U \cap \overline{(\U^c)},$ where $\overline \U$ is the closure of $\U$, is an analytic curve (equivalently, the conformal map from $\U$ to $\DD$ has a conformal extension across the boundary, see \cite[Prop 3.1]{pom}).

\emph{Throughout the rest of the  article, all our domains will be smooth and hence we will not recall all these properties and simply speak of ``a domain".}

Henceforth for any domain $\U\subset \C,$  we use $\partial \U$  to denote the boundary of $\U,$\footnote{We also need the notion of boundaries of  subsets of vertices in finite of graphs. This will be defined later in \eqref{bdry1}.}
Also for a domain $\U$ and points $x_{\rm{B}},x_{\rm{R}} \in \partial \U,$ let,  
\begin{align}
\label{confmap1}
\phi&:\DD \rightarrow \U,\\
\nonumber
\psi&: \U  \rightarrow \DD,
\end{align}
be conformal maps, such that $\phi \circ \psi,$ $\psi \circ\phi$ are the identity maps on the respective domains and
$\psi(x_{\rm{B}}) = -i,\, \psi(x_{\rm{R}}) =  i.$ 
The existence of such maps is guaranteed by the Riemann Mapping Theorem, (see for  e.g., : \cite[Chapter 6]{ahl1}). In fact, there exists  more than one pair  $(\phi,\psi),$ since a conformal map between domains has three degrees of freedom and here we have fixed the value at only two points.  However, we choose a particular pair $(\phi,\psi),$ assumed to be fixed throughout the rest of the article.
For any $\beta \in \mathbb{R},$
define,
 \begin{equation}\label{area12}
 \U_{\ge\beta}:=\Bigl\{z\in  \U:\frac{64}{\pi}\log{\Bigl|\frac{\psi(z)-i}{\psi(z)+i}\Bigr|}\ge \beta\Bigr\}.
 \end{equation} 
Also $\U_{<\beta}$ is defined in the obvious way.  
The constant $\frac{64}{\pi}$ is not important. It falls out of some natural integrals involving the Brownian motion heat kernel appearing later in the article. 
Thus for the disc, $\DD_{\beta}$ is a region enclosed by an orthogonal circular arc (geodesic with respect to the hyperbolic metric) symmetric with respect to $i$ and $-i$ and is the level set of the Green function, as mentioned before. For a general domain $\U$, we transfer the regions via conformal maps (using conformal invariance of the Green function) and the boundaries still remain  geodesics as they are conformally invariant, (see Figure  \ref{conftrans}).
\begin{figure}[hbt]
\centering
\includegraphics[scale=.6]{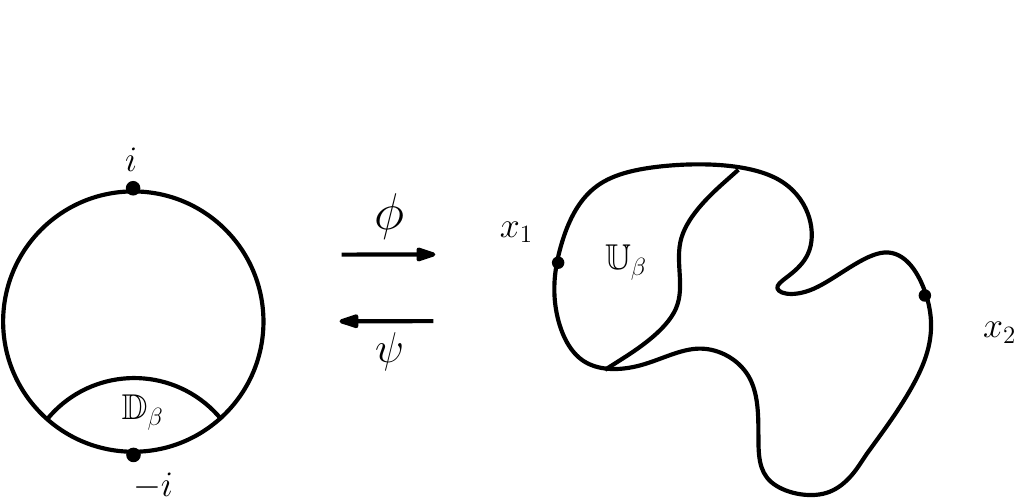}
\caption{Hyperbolic geodesics are circular arcs on the disc. They are invariant under conformal maps. However, since conformal maps are not area preserving $\DD_{(\alpha)}$ can get mapped to $\U_{(\alpha')}$ for some $\alpha'\neq \alpha $ .} 
\label{conftrans}
\end{figure}
However, for our purposes, we need an area parametrization of the regions $\U_{\ge\beta}.$ Given $\alpha\in (0,1)$, let $\beta=\beta(\alpha)$ be such that, 
\begin{equation}\label{areaparam}
{\rm{area}}(\U_{\ge\beta})=\alpha\,\, \rm{ area}(\U).
\end{equation} 
 Let 
 \begin{equation}\label{area13}
 \U_{(\alpha)}:= \U_{\ge\beta(\alpha)}.
 \end{equation}
 We now  fix $\alpha \in (0,1/2],$  throughout the rest of the article. 
Given $\U,$ we take $\U_{n} = \U \cap \frac1n \Z^2,$ as our vertex set. As the edges of our graph, we take the usual nearest-neighbor edges of $\U_n$ thought of as a subset of $\frac1n \Z^2$. However, we delete every such edge which intersects $\U^c$. Since $\U$ is assumed to be smooth, $\U_n$ will be connected for large enough $n$. See Remark \ref{graphcon123} below.

Fix $k=\lfloor{\alpha |\U_n|}\rfloor,$ and $x_{\rm{B}},x_{\rm{R}} \in \partial \U$.
The following defines the blobs, as discussed in the previous section. 
 For small enough $\dd>0,$ let $y_{\rm{B}}, y_{\rm{R}} \in \U,$ be such that for ${\rm{C}}\in \{{\rm{B}},\rm{R}\},$
 \begin{eqnarray*}
 d(x_{\rm{C}},y_{\rm{C}})&=& \dd \\
 d(y_{\rm{C}},\partial \U) &> & \frac{\dd}{2}.
  \end{eqnarray*}   
 Let $\U_{\rm{B}}=\U_{{\rm{B}},\delta}:=B(y_{\rm{B}},\frac{\dd}{4})$ and similarly let $\U_{\rm{R}}=\U_{{\rm{R}},\delta}:=B(y_{\rm{R}},\frac{\dd}{4}).$  
As discrete approximations of $\U_{\rm{B}},$  we take $$\U_{{\rm{B}},n}=B(z_{{\rm{B}},n}, \frac{\dd}{4})\cap \U_{n},$$ where $z_{{\rm{B}},n}\in \frac{1}{n}\Z^2$ is the closest lattice point to $y_{\rm{B}}$.  
Define $\mu_{\rm{B}}=\mu_{{\rm{B}},\delta,n}$ to be the uniform measure on $\U_{{\rm{B}},n}$.
We similarly define $\U_{{\rm{R}},n}$ by choosing $z_{2,n},$ and correspondingly define $\mu_{\rm{R}}$ as the uniform measure on $\U_{{\rm{R}},n}.$
\begin{figure}[hbt]
\centering
\includegraphics[scale=.8]{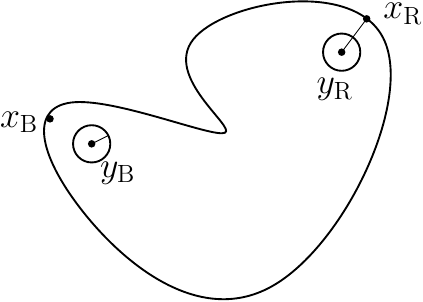}
\caption{$y_{\rm{B}}$'s and $y_{\rm{R}}$ are points at distance $\dd$ from $x_{\rm{B}}$ and $x_{\rm{R}}$ respectively. They are at distance at least $\frac{\dd}{2}$ from $\partial \U$. The blobs are discs of radius $\frac{\dd}{4}$ centered at $y_{\rm{B}}$'s and $y_{\rm{R}}$.}
\label{blobdef234}
\end{figure}
Thus the blobs are the sets $\U_{\rm{B}}$ and $\U_{\rm{R}}$ and $\mu_{\rm{B}}$, and $\mu_{\rm{R}}$ are uniform measures on their discrete approximations $\U_{{\rm{B}},n}$ and $\U_{{\rm{R}},n}$ respectively.  Note that the points $y_{\rm{B}}$ and $y_{\rm{R}}$  were just required to satisfy certain  properties, and other than that were completely arbitrary. 
\begin{remark}\label{graphcon123}
The smoothness assumption on $\U$ allows us to choose $y_{\rm{B}}$ and $y_{\rm{R}}$.  This is formally proved in Corollary \ref{ias1} (see Figure  \ref{blobdef234}).
The connectedness of $\U_n$ for large enough $n,$ follows since near the boundary $\U$ looks like a half plane locally, see \eqref{localhalf}.  
\end{remark}

For the formal statement of the theorem let 
\begin{equation} \label{goodset11} \cG_{\e}=\cG_{\e,n} := \{  S \subset \U_n \,:|S|=\lfloor{\alpha |\U_n|\rfloor},\,\, \U_{(\alpha-\e)} \cap \U_n \subset S \subset \U_{(\alpha+\e)}\cap \U_n \}. \end{equation}
Note that, because the conformal maps  $\phi$ and $\psi$ are Lipschitz (see Section \ref{commen1} i.), and $\alpha$ is fixed, for all small enough $\e,$ the boundaries of $\U_{(\alpha)}$ and $\U_{(\alpha \pm \e)}$ are $\Theta(\e)$ away from each other where the constant in $\Theta(\cdot),$ depends only on $\alpha, \U.$  

Thus $\cG_\e$ denotes the set of all configurations where each vertex in $\U_{(\alpha-\e)}$ is colored blue and each vertex in $\U \setminus \U_{(\alpha-\e)}$ is colored red. Our main result informally states that for any $\e$ as $n$ goes to infinity (the mesh size goes to zero) the equilibrium measure of the competitive erosion chain concentrates on $\cG_{\e}$ provided that the blob size $\delta$ is small enough.

 \begin{thm}\label{mainresult}(Main Result) For a domain $\U\subset\C,$ consider the competitive erosion chain on $\U_n$ with blob radii $\frac{\dd}{4}$ and $k=\lfloor{\alpha |\U_n|}\rfloor$. Then, given $\e>0,$ for $\dd< \dd_0(\e),$ there exists a positive constant $D=D(\e,\dd,\U),$ such that  for all large enough $n=2^m,$    
\begin{equation}\label{weak}
\pi(\cG_{\e})\ge 1-e^{-Dn},
\end{equation} 
where $\pi=\pi_{\dd,n}$, is the stationary measure of the chain ${\rm{CE}}(t)$.
\end{thm}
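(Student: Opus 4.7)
The plan is to establish the result first for the unit disc $\DD$ with sources in small blobs near $\pm i$, and then transfer it to a general smooth $\U$ via the conformal map $\phi:\U\to\DD$. Since $\partial\U$ is analytic, $\phi$ extends conformally across $\partial\U$, so $\phi$ is bi-Lipschitz on $\overline\U$ and pulls back the lattice $\DD_n$ comparably to $\U_n$ on every mesoscopic scale. Crucially, this transfer preserves harmonic measure and exit distributions of random walks up to uniformly bounded multiplicative errors, so conformal invariance acts as an honest tool rather than a heuristic. The candidate equilibrium $\U_{(\alpha)}$, bounded by the hyperbolic geodesic between $x_1$ and $x_2$, is exactly the pushforward under $\phi^{-1}$ of $\DD_{(\alpha)}$, which in turn is bounded by a circular arc orthogonal to $\partial\DD$.

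For the disc, the heuristic equilibrium is that the discrete harmonic measures from the two blobs are balanced along $\partial \DD_{(\alpha),n}$. To turn this into a rigorous stability statement I would build a Lyapunov functional $\Phi(S)$ of the weighted symmetric difference $S \triangle (\DD_{(\alpha)}\cap\DD_n)$, with vertices deep inside the ``wrong'' side carrying larger weight (for instance, a weight derived from the Green function of $\DD_n$ or from blob-hitting probabilities). The central analytic step is to prove a drift inequality
\begin{equation*}
\mathbb{E}\bigl[\Phi(S(t+1)) - \Phi(S(t)) \,\big|\, S(t) = S\bigr] \;\leq\; - \frac{c\,\Phi(S)}{n^{\kappa}}
\end{equation*}
for some $\kappa>0$ and $c>0$, whenever $S\notin\cG_{\e,n}$. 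The mechanism: a protrusion of blue vertices past $\partial \DD_{(\alpha+\e)}$ is close to the red source in harmonic measure (by Beurling-type estimates together with reflection symmetry at the orthogonal arc) and far from the blue source, so the $\mu_2$-walk reaches and removes it before a $\mu_1$-walk can reinforce it; in expectation the chain erodes the excess. Combining this drift with Azuma--Hoeffding applied to the Doob martingale of the one-step increment (whose jumps are bounded by $O(1)$) converts the drift into exponential concentration of the stationary measure $\pi_{\dd,n}$ on $\cG_{\e,n}$ of the type stated.

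The main technical obstacle is obtaining a sharp discrete-to-continuous comparison of harmonic measure from the blobs, uniformly over the rough configurations $S$ reachable by the chain. This is delicate near $\partial\U_n$, where the naive Brownian-to-random-walk coupling up to the hitting time breaks down, and one must deploy Beurling-type estimates together with the analyticity of $\partial\U$ to control where a walk started at the blob first meets $\partial S$. A secondary obstacle is that the drift argument is local: it presupposes that the interface of $S$ is already within a macroscopic neighbourhood of $\partial\U_{(\alpha)}$. To cover every $S\in\cG_{\e,n}^{c}$ one must bootstrap from a coarse bound, plausibly inherited from the cylinder analysis of \cite{cyl}, via a multi-scale renormalization. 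The specific rate $e^{-dn^{1/3}}$ suggests that the dominant contribution to the tail comes from boundary fluctuations at an intermediate scale of order $n^{-1/3}$, produced by the interplay between the mixing of the random walk and the drift of the chain; making this multi-scale tradeoff precise is where I expect most of the work to lie.
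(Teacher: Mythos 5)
Your overall architecture — a Lyapunov functional built from Green functions, plus a drift inequality converted to stationary-measure concentration by Azuma-type estimates — is the right flavor and does match the paper's general strategy. But there are two concrete gaps that would sink the proof as proposed.

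First, a Lyapunov/drift argument, however sharp, only controls the \emph{density} of misplaced vertices, i.e. gives an estimate for the event $\cA_{\e}$ defined by $\#\{x\in\U_{(\alpha)}:\sigma(x)=2\}\le\e n^2$ and its mirror (this is the content of Theorem \ref{quantmainresult}, with rate $e^{-Dn^2}$). It cannot by itself upgrade to the set $\cG_{\e,n}$, which demands that \emph{every} vertex outside the $\e$-band have the correct color: a configuration with, say, a single lone red vertex deep inside $\U_{(\alpha-\e)}$ still has $w(\sigma)$ within $o(n^2)$ of its maximum, so no weight-based drift can rule it out. The paper closes this gap by a second, genuinely different mechanism: robust IDLA estimates on $\U_n$ started from a random configuration in $\cA_\e$ (Theorem \ref{IDL1}, Lemmas \ref{linehit2} and \ref{main1}), showing that the dust gets wiped out in $O(n^2)$ steps with probability $1-e^{-cn^{1/3}}$. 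This is also where the $n^{1/3}$ actually comes from — it is the IDLA fluctuation rate in the style of Asselah--Gaudillière / Jerison--Levine--Sheffield, not a consequence of ``boundary fluctuations at scale $n^{-1/3}$ from the mixing/drift interplay.'' Your proposal has no analogue of this IDLA step, and ``multi-scale renormalization from the cylinder case'' is a placeholder, not an argument.

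Second, your opening move — prove it for $\DD_n$ and transfer to $\U_n$ via $\phi$ — does not straightforwardly work, because $\phi$ does not carry the lattice $\DD_n$ to the lattice $\U_n$, and discrete random walk on one lattice is not comparable to discrete random walk on the other up to ``uniformly bounded multiplicative errors.'' Conformal invariance is exact only for the continuum limit. The paper never compares $\DD_n$ to $\U_n$; instead it defines the discrete Green function $G_n$ directly on $\U_n$ and proves (Theorem \ref{convergence}, via local CLT for random walk converging to reflected Brownian motion) that $G_n$ converges uniformly to the conformally invariant limit $\frac{64}{\pi}\log\bigl|\frac{\psi(z)-i}{\psi(z)+i}\bigr|$. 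Relatedly, your heuristic for the drift (``the $\mu_2$-walk reaches the protrusion before a $\mu_1$-walk can reinforce it'') is essentially the level-set heuristic of Section \ref{heuristic}; making it rigorous is precisely the hard part, and the paper does it by an electrical-network/energy argument (Lemma \ref{energyint}, Thomson's principle, Rayleigh monotonicity via the quantitative gluing Lemma \ref{gain1}) that establishes a \emph{uniform} additive drift $\ge a>0$ outside $\Omega_{(\e)}$, rather than the multiplicative $-c\Phi(S)/n^\kappa$ form you propose (which is neither needed nor clearly achievable). You would need to supply something equivalent to that energy argument, plus the IDLA machinery, for the proof to go through.
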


See Figures \ref{formation23} and \ref{blob1} ii .
Note that for brevity, we suppress the centers of the blobs in the notation $\pi_{\dd,n}$. 
Informally, the above theorem says the following: consider the competitive erosion chain on the discretization of any smooth domain $\U$ with points $x_{\rm{B}},x_{\rm{R}}$ on the boundary. Suppose that $\alpha$ fraction of the vertices are blue. If the blobs from which the blue and red random walks start are small enough and close enough to $x_{\rm{B}}$ and $x_{\rm{R}},$ then as the mesh size goes to zero, at stationarity, the blue region looks like the set $\U_{(\alpha)},$  in the sense that on the  `$\U_{(\alpha)}$ side' of  a small band around the boundary of $\U_{(\alpha)}$, the vertices are all blue and similarly on the `other side',  all vertices are red. As discussed above, the sets $\U_{(\alpha)}$ for various domains $\U$ can be obtained from each other via conformal maps. Thus Theorem \ref{mainresult} establishes that competitive erosion is \emph{conformally invariant}.
Note that the mesh size goes to $0$ at dyadic scales. This is for technical convenience, and the reasons are elaborated in Section \ref{commen1} iii. Implicit in the statement of Theorem~\ref{mainresult} is the claim that competitive erosion has a unique stationary distribution.  A version of this was proved for the cylinder graph in \cite[Section 2.3]{cyl}, and the same proof carries over here, owing to the smoothness assumption on $\U.$ We omit the details. 

\begin{figure}[hbt]
\centering
\includegraphics[scale=.18]{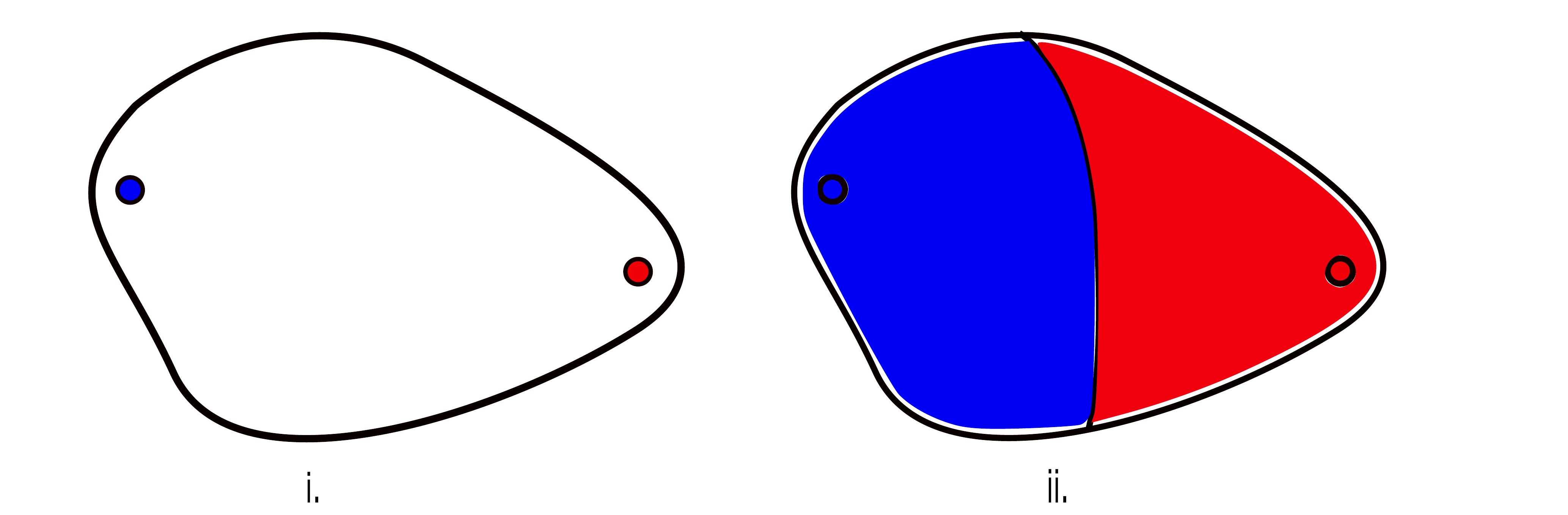}
\caption{On the left: uniform measure on the two blobs close to the points $x_{\rm{B}},x_{\rm{R}}$ act as ``smooth'' approximations of the corresponding delta masses. On the right: The red and blue territories have separated out after running the competitive erosion chain with the blue region being ``close" to $\U_{(\alpha)}$ if the blobs are small enough. }
\label{blob1}
\end{figure}

\subsubsection{Statistical version of Theorem \ref{mainresult}}\label{quant1}
Notice that one can think of Theorem \ref{mainresult} as a convergence result in terms of the Hausdorff metric.  Our proof strategy at a very high level, consists of broadly two steps: 
\begin{enumerate}
\item Prove a version of the main result for the weaker $L_{1}$ metric (see Theorem \ref{quantmainresult} below), which we call a statistical version.
\item Bootstrap to get Theorem  \ref{mainresult}, by using certain robust estimates about IDLA on $\U_n.$
\end{enumerate}
We now prepare to state the statistical version of Theorem \ref{mainresult}. 
Following our convention of coloring the vertices with blue ($\rm{B}$) and red (${\rm{R}}$) color, define, 
\begin{align}\label{statespace}
\Omega:=\Omega_n &:=\{\sigma\in \{
{\rm{B}},{\rm{R}}\}^{\U_n}, 
|\{x \in \U_n : \sigma(x)={\rm{B}}\}|=\lfloor{\alpha|\U_n|}\rfloor\}\\
\nonumber
\Omega':=\Omega'_n &:=\{\sigma\in \{{\rm{B}},{\rm{R}}\}^{\U_n}, 
|\{x \in \U_n : \sigma(x)={\rm{B}}\}|=\lfloor{\alpha|\U_n|}\rfloor+1\}.
\end{align}
Thus  the competitive erosion chain $\{{\rm{CE}}(t)\}$, can also be thought of as a Markov chain $\{\varsigma_{t}\}_{t\ge0}$, on $\Omega$ in a natural way.
Now, given $\e>0$, define $\cA_{\e}:=\cA_{\e,n}$ to be the set of all configurations $\sigma\in \Omega_n$ such that, 
\begin{eqnarray}\label{optimalconf}
|\{x\in \U_{(\alpha),n}:  \sigma(x)={\rm{R}}\}|&\le & \e n^2,\\
\nonumber
 |\{x\in \U_n \setminus \U_{(\alpha),n}:  \sigma(x)={\rm{B}}\}|& \le & \e n^2,
\end{eqnarray}
where $\U_{(\alpha),n}=\U_n \cap \U_{(\alpha)}$ ($\U_{(\alpha)}$ was defined in \eqref{area13}).
Thus  if $\e$ is small, $\cA_{\e}$ is the set of all configurations where the amount of ``dust" particles of unexpected color has small density, i.e., the set of all configurations, for which the union of all squares whose centers are blue sites, is close to $\U_{(\alpha)}$ in the $L_1$ metric, (see Figure  \ref{f.dust1}).
\begin{figure}[hbt]
\centering
\includegraphics[scale=.34]{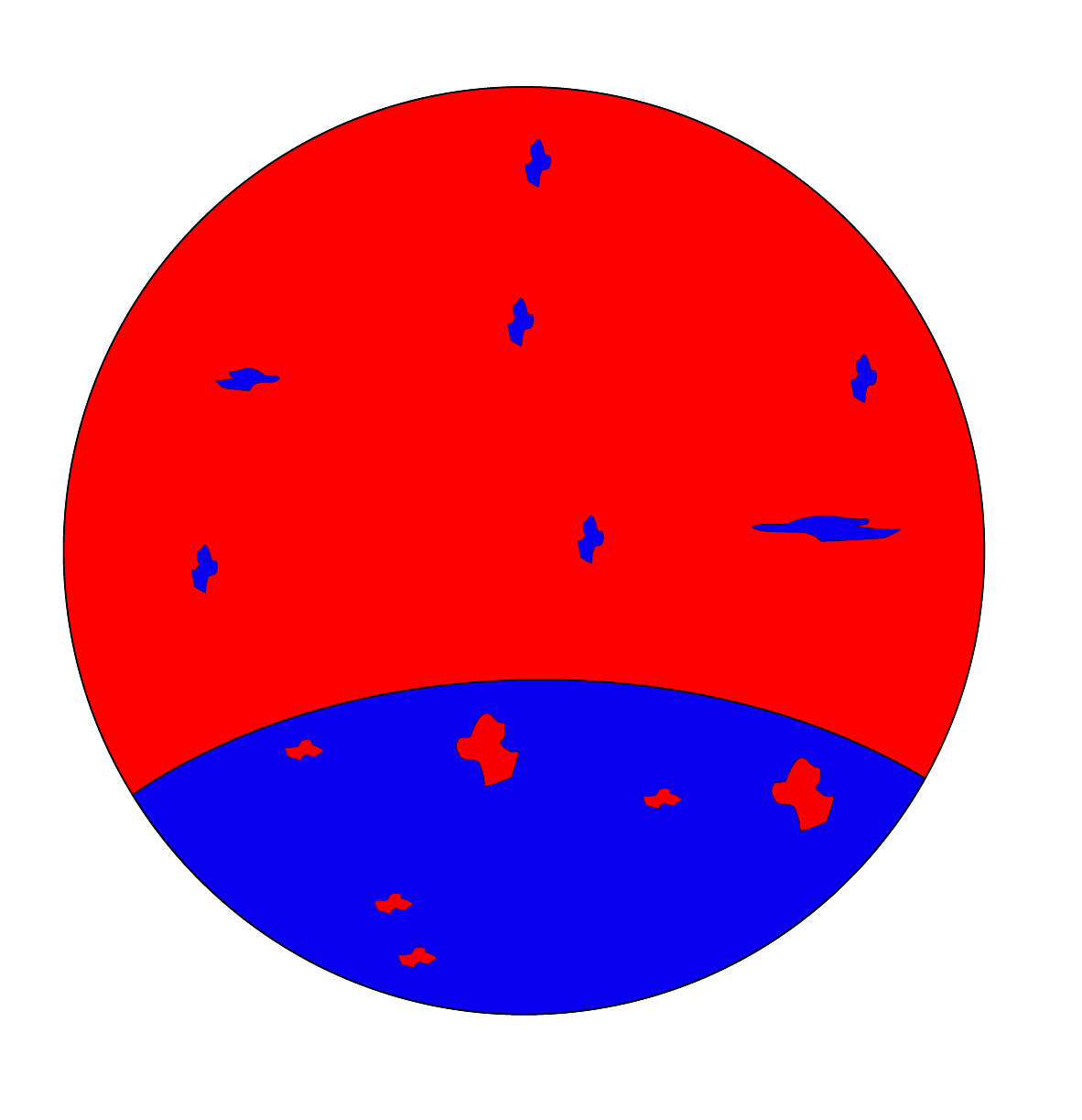}
\caption{ A typical configuration in $\cA_{\e}$ for small $\e,$ with small number of particles of unexpected color. By Theorem \ref{quantmainresult} the stationary measure of the competitive erosion chain concentrates on such configurations.}
\label{f.dust1}
\end{figure}
The next result shows that the stationary measure of $\cA_{\e}$ is large.
\begin{thm}\label{quantmainresult} Consider the setting of Theorem \ref{mainresult} and let $\e>0$. Then  there exists $\delta_0=\dd_0(\e)$ such that for all $\dd\le \dd_0$ and  $n=2^{m}>N(\dd)$, $$\pi(\cA_{\e})\ge 1-e^{{-Dn}^2},$$ where $D=D(\e,\dd, \U)>0$. 
\end{thm}

\subsection{Comparison with IDLA}\label{idla1}
Internal diffusion limited aggregation (IDLA) is a fundamental model of a 
random interface moving in a \emph{monotone} (outward) fashion. 
IDLA on a graph involves only one species with an ever-growing territory $I(t)$
where $ I(t+1)\setminus I(t)$ is the first site outside $I(t),$ visited by a simple random walk whose starting point has  a certain specified distribution.
Competitive erosion can be viewed as a symmetrized version of IDLA: whereas $I(t)$ and its complement  play asymmetric roles, ${\rm{CE}}(t)$ and its complement play symmetric roles in \eqref{dynamics}.

IDLA on a finite graph is only defined up to the finite time $t$ when $I(t)$ is the entire vertex set. For this reason,  IDLA is usually studied on an infinite graph and the theorems about IDLA are limit theorems: asymptotic shape \cite{lbg}, order of the fluctuations \cite{ag,ag2,jls}, and distributional limit of the fluctuations \cite{gff}. Scaling limit of IDLA  with multiple sources was proved in \cite{levinescaling}.  In contrast, competitive erosion on a finite graph is defined for all times, so it is natural to ask about its stationary distribution.   
To appreciate the difference in character between IDLA and competitive erosion, note that
the stationary distribution of the latter assigns tiny but positive probability to configurations that look very different to the final figure in Figure  \ref{formation23}; thus competitive erosion will occasionally form these exceptional configurations. We end this discussion by mentioning that in this article, we will  crucially use estimates for IDLA  on the graph $\U_n$ in the proof of Theorem \ref{mainresult}. 

\subsection{Remarks about Theorems \ref{mainresult}, \ref{quantmainresult} and Section \ref{para}}\label{commen1}
\begin{itemize}

\item [i.] Since $\U$ is smooth, using Schwarz reflection,  $\phi$ and hence $\psi$ can be extended conformally  across the boundary onto some neighborhoods of $\overline \DD$ and $\overline \U.$ In particular this implies that $|\phi'|$ and $|\psi'|$ are bounded away from $0$ and $\infty$ on $\U$ and $\DD$ respectively. See \cite[Prop 3.1]{pom}. This bi-Lipschitz nature of the maps will be used in several distortion estimates throughout the rest of the article.

\item [ii.] Note that the blob sources $\U_{\rm{B}}$ and $\U_{\rm{R}},$ lie entirely in the interior of $\U$ and the measures $\mu_{\rm{B}}$ and $\mu_{\rm{R}}$  should be thought of as `regular' approximations to  point masses at points $x_{\rm{B}},x_{\rm{R}}$ as $\dd \rightarrow 0.$ Also, by choice $ |\U_{{\rm{B}},n}|=|\U_{{\rm{R}},n}|.$  This will be technically convenient.

\item [iii.] Lastly, we discuss the choice of the dyadic mesh sizes in the statements of Theorems \ref{mainresult} and \ref{quantmainresult}. The technical core of the proofs of the theorems rely on a convergence theory of the discrete Green function for random walk to that of Reflected Brownian motion developed by the authors in \cite{tc}. This depends on the convergence of the random walk on $\U_n$ under proper scaling of time to Reflected Brownian motion on $\overline\U$. The proof of the latter, appears in \cite{BC} and subsequent local CLT estimates have been obtained in \cite{wtf}. However, both the above papers assume dyadic discretization in their proofs.  
Since the proof of our results rely heavily on the aforementioned convergence results, we assume the dyadic discretization in our statements as well.   We end by mentioning that it has been pointed out (via personal communication) by the authors of \cite{BC} and \cite{wtf}, that the convergence holds true even when the mesh size at the $n^{th}$ step is $\frac{1}{n}$ instead of $\frac{1}{2^{n}}$ and the latter was chosen for technical convenience.   
\end{itemize}

\section{Additional remarks and future directions}\label{concl1}

Note that Theorem \ref{mainresult} proves closeness in the Hausdorff distance of the set of blue particles to the set  $\U_{(\alpha)}$ and the set of red vertices to the set $\U \setminus \U_{(\alpha)}$, with high probability under equilibrium. This is done by first proving closeness in the weaker $L_1$ topology of the indicator of the blue set to the indicator of $\U_{(\alpha)}$ in Theorem \ref{quantmainresult}. Then we strengthen it by comparing the competitive erosion chain to IDLA.  

Moreover, it is not too hard to see that  the probability bound of $e^{-cn^2}$ in Theorem \ref{quantmainresult} is tight up to constants in the exponential scale. For the purpose of illustration assume that the domain is a square and that the red and blue blobs are located near the middle points of the bottom side and the top side respectively. Now starting from a configuration in $\cG_{\e},$ with probability at least $e^{-cn^2},$  in the next $\Theta(n^2)$ rounds of the competitive erosion chain, the blue random walks will hit the left half of the interface, while the red random walks hits the right half of the interface. Thus there would be a blue cluster growing into the red region on the left half of the domain whereas there would be a red cluster growing into the blue region on the right hand side of the domain.
Since the process runs for $\Theta(n^2)$ rounds, the number of blue particles encroaching the `red region' is $\Theta(n^2)$ and hence by definition the configuration reaches a state outside $\cA_{\e}$ for all small enough $\e.$ This implies that such configurations will have probability at least $e^{-cn^2}.$

Notice that for  Theorem \ref{mainresult} the bound is only exponential in $n$ and not $n^2$. 
A lower bound of exponential in $n^2$ for $\pi(\cG^c_{\e})$ follows from the above discussion. Another possible approach to proving  a lower bound is by considering the probability of forming thin long  red tentacles in the blue region. 
Note that in each of  the next $n$ steps of the chain the blue random walk can hit a particular point on the interface with probability  $O(\frac{1}{n^n}),$ and then build a tentacle of length $O(n)$ growing out of it, with probability  $\frac{1}{4^{n^2}}.$  Thus, this strategy which yields optimal bounds for fluctuations of IDLA on $\Z^2$ (see \cite{jls}),  does not help improve the bound from the preceding paragraph.

An important ingredient in the proof of the  bound in Theorem \ref{mainresult} is a bound on the rate of growth of the IDLA cluster on $\U_n$ (Theorem \ref{IDL1}) in a non-standard setting where some of the initial sites are occupied.  Thus, a possible approach to reducing the gap between the upper and lower bounds of $\pi(\cG^c_{\e})$ could be to use the sharp techniques to bound the growth of IDLA clusters appearing in \cite{jls} based on harmonic functions.

The next remark concerns the mixing time of the competitive erosion chain. The proof of Theorem \ref{quantmainresult} in fact says:  the chain reaches the set $\cA_{\e}$ starting from any configuration in $O(n^2)$ time (see Lemma \ref{hitting}). Clearly this is optimal since if one starts from a configuration where a significant fraction of the vertices in $\U_{(\alpha),n}$ are red, then it will indeed take at least $\Theta(n^2)$ rounds of the chain to make all of those vertices blue. Thus in some `macroscopic' sense the mixing time of the chain is $\Theta(n^2).$ Finding the order of the actual mixing time of the  chain is an interesting question and is left as an open problem.

Another natural technical challenge would be to prove a version of Theorem \ref{mainresult} in the case where the sources are points on the boundary instead of blobs with a nontrivial interior. This is the setting in which Propp originally made his predictions.  The main issue lies in understanding the convergence of the Green function for such source distributions as the mesh size goes to $0,$ since the random walk started from the source would typically visit the source $\log n$ times where $\frac{1}{n}$ is the grid size, while for Reflected Brownian motion one has to pass to the notion of local time. When the sources have positive measure one does not encounter this technical difficulty.
Also, in principle one can study the same process when the starting distributions are certain specified distributions on $\U_n$ and not just point masses.  The interfaces in that case would be the level curves of the appropriate Green function; see \cite[Sec 1.3]{cyl} for further discussion. 
 We list below other possible directions for further research.

\begin{enumerate}
\item 
\textbf{Fluctuations}.
A natural next step would be to find the order of magnitude of  fluctuations of the interface. The corresponding results for internal DLA have been proved in \cite{jls,gff} where the authors prove $O(\log(n))$ and $O(1)$ bounds on the maximum and typical deviations along with explicit description of the scaling limits.
\item
\textbf{General Domains and Higher dimensions}.  The main results of this article show that  the interface of the blue and red territories stabilize along a level curve of the corresponding dipole Green function which is harmonic except at the sources and has  Neumann boundary condition. Some form of this  fact should hold true in a general class of domains which need not even be simply connected or planar. For the cylinder graph this was shown in \cite{cyl}.
Proof of Theorem \ref{mainresult} is rather delicate, but the proof of Theorem \ref{quantmainresult} is robust and  should be generalizable to such settings. The main ingredients involved include a convergence theory for the Green function for random walk as the mesh size becomes smaller and an understanding of the Green function in the continuum. These programs in the setting of this paper were carried out by the authors in \cite{tc} and  the  generalizations are natural further directions to pursue. 
\item
\textbf{Multiple particles}. Another natural extension is when there are more than two kinds of particles. One concrete definition of the model in the setting of three colors which has the property of conservation of mass is the following: Let the colors be red, blue, and green and pick three points $x_1,x_2$ and $x_3$ on the boundary of the domain. Red, blue and green particles will be emitted from $x_1,x_2$ and $x_3$ respectively.  Start by emitting a  red particle which walks till it encounters a blue or a green particle and occupies its position by removing it. In the next round the particle emitted has the same color as the particle removed in the first round. Subsequently in every round, the particle emitted  is of the same color as the particle which was replaced in the last round to ensure conservation of mass.  There is no conjecture for the form of the separating curves currently. Any progress along this direction would be rather interesting and thus proving a theorem analogous to Theorem \ref{mainresult} for \emph{Competitive Erosion} in the above setting remains an intriguing challenge. 
\end{enumerate}

\section{Main ideas and organization of the article}\label{sop}
To prove  Theorem \ref{quantmainresult}, i.e., that the stationary distribution concentrates on the set $\cA_{\e}$, we identify a \emph{Lyapunov function} ${\rm{W}}(\cdot)$ on the state space. That is a function which attains its global maximum in $\cA_{\e},$ and increases in expectation in one step of the process $\{\varsigma_t\}$ when starting outside $\cA_{\e}$, i.e., that if $\sigma \notin \cA_{\e},$ then,
	\begin{equation}\label{e.drift}
	 \E_{\sigma}({\rm{W}}(\varsigma_1) - {\rm{W}}(\varsigma_0)) \geq a >0,
	 \end{equation}
 (where $\E_{\sigma}(\cdot)$ denotes the expectation with respect to the competitive erosion  process $\{\varsigma_t\}$, started from $\sigma$).
For more on Lyapunov functions see \cite{fmm}. 
To construct such a function ${\rm{W}}(\cdot),$ we proceed by defining the following discrete Green function: for any $x\in \U_n,$
 \begin{equation}\label{olddef}
 G_n(x)=\frac{2n^2}{|\U_{{\rm{B}},n}|}\int_{0}^{\infty}[\mathbb{P}_x(X(t)\in \U_{{\rm{B}},n})-\mathbb{P}_x(X(t)\in \U_{{\rm{R}},n})]\,dt,\footnote{For technical reasons, in the formal definition, we will introduce a centering constant (see \eqref{dgf}).}
 \end{equation}
 where $\mathbb{P}_x(\cdot)$ is the measure induced by the  continuous time random walk $X(\cdot),$ on the graph $\U_n$ started from $x,$ (the waiting times of the random walk are mentioned explicitly in the next section). Thus, the above function is the expected difference, in the amount of time the random walk spends in $\U_{{\rm{B}},n}$ and in $\U_{{\rm{R}},n}$ respectively.
 Now, for any $\sigma\in \Omega \cup \Omega',$ we define the weight function, 
\begin{equation}\label{wf}
 {\rm{W}}(\sigma): = \sum_{x \in S_{\rm{B}}(\sigma)} G_n(x),
\end{equation}
where for $\sigma \in \Omega \cup \Omega',$
\begin{align}\label{region}
S_{\rm{B}}(\sigma)&:=\{x \in \U_n: \sigma(x)={\rm{B}}\},\\
\nonumber
S_{\rm{R}}(\sigma)&:=\{x \in \U_n: \sigma(x)={\rm{R}}\}.
\end{align}
When the underlying $\sigma$ is clear from context, we will often denote the above sets by $S_{\rm{B}}$ and $S_{\rm{R}}$ respectively.
Now, recall the map $\psi$ from \eqref{confmap1}.
Our proofs rely on the key technical  result (Theorem \ref{convergence}) which states that: 
up to translation by a constant, $G_{n}(x)$ is close to the function 
$\frac{64}{\pi}\log{\bigl|\frac{\psi(x)-i}{\psi(x)+i}\bigr|},$  if the blob sizes are small and $n$ is large.
The proof of the above, involves a convergence theory for the discrete Green function with  Neumann boundary conditions to its continuous counterpart, and appears in a separate paper by the authors \cite{tc}. The proofs rely on local central limit theorems for convergence of random walk on $\U_n$ to Reflected Brownian motion on $\U.$   Some of these estimates were obtained very recently in \cite{BC,wtf,thes}.
Note that from the above discussion it becomes clear  why ${\rm{W}}(\sigma)$ is maximized if all the blue vertices are in $\U_{(\alpha),n}$ (see \eqref{area13}) and hence this  justifies the construction of the function ${\rm{W}}(\cdot).$  A related discussion about interfaces of the blue and the red territory appears in a  paper  studying competitive erosion on the simpler cylinder graph, by the authors along with Lionel Levine and James Propp \cite[Sec 4.3]{cyl}.

The proof of  \eqref{e.drift} is the most technically challenging part of this paper. The proofs are based on considering the graph $\U_n$ as an electrical network. At a high level, the proof proceeds by interpreting  $\E_{\sigma}({\rm{W}}(\varsigma_1) - {\rm{W}}(\varsigma_0))$,  as the potential drop when certain vertices of the network are identified/glued.  Rayleigh's monotonicity principle says that the voltage drop is always non-negative. However, for our purposes we need to make the monotonicity principle quantitative to prove a strictly positive lower bound. There are several geometric case considerations along the way to make the argument work. This constitutes a bulk of the paper and the result is stated as  Theorem \ref{negdrift123}.  
 
The proof of Theorem \ref{negdrift123} is broken down into several parts. Sections \ref{efn} and \ref{build100} are devoted to this. Section \ref{roadmap} provides a detailed roadmap for the proof and what the various ingredients and steps of the proof are.
Assuming Theorem \ref{negdrift123}, we use Azuma's inequality to argue that the process ${\rm{W}}(\varsigma_t),$ which more or less behaves like a sub-martingale, attains a state near its maximum in time $O(n^2)$ and takes an exponential in $n^2$ time to drift away from the maximum. Thus on average, it spends all but an exponentially tiny fraction of time, in a neighborhood of its maximum.
These ingredients together with a general estimate relating local times to stationary measure (Lemma~\ref{hitstation}) establish  Theorem~\ref{quantmainresult}. This is done in Section \ref{mht}. 
In Section \ref{TP} we state some properties of the Green function including asymptotic conformal invariance. 

To prove Theorem \ref{mainresult} from Theorem \ref{quantmainresult} we show that starting erosion from a configuration $\sigma\in \cA_{\e},$ the remaining dust particles (red particles in $\U_{(\alpha-\e ),n}$) get wiped off quickly. This part of the proof is delicate and uses IDLA estimates on $\U_n$ starting from the random environment $\sigma.$  Proofs of such estimates constitute the remaining technical challenge and is presented in Section \ref{IDLArobust}. The proof of the main result appears in Section \ref{pmr}. 
We finish the proofs of some lemmas stated and used in Section \ref{mht} in Appendix \ref{pol}.
Some basic geometric facts used throughout the article are proved in Appendix \ref{RWE}. 

\subsection{More notations, conventions and remarks}\label{techass}

We already introduced a list of notations and definitions in Section \ref{para} needed for the statements of the main results. In this section we summarize the notation used in the remainder of the article. We also introduce some conventions that will be followed in the sequel.

Even though we look at the Markov chain $\{\varsigma_t\}$ at integer times, 
for later purposes we need to keep track of the full process: i.e., we consider, 
\begin{equation}\label{fullpro1}
\{\varsigma_{t}\},\,\,\,\, t=0,1/2,1,3/2,2,\ldots,
\end{equation}
where $\varsigma_t \in \Omega$  or $\varsigma_t \in \Omega',$ depending on whether $t$ is an integer or a half integer. 
Clearly, a single integer time step of the chain consists of a step from $\Omega$ to $\Omega'$ followed by a step from $\Omega'$ back to $\Omega$.
Throughout the article, by random walk on $\U_n$, we will mean the 
continuous time random  walk   with  waiting times given by exponential random variables with mean $\frac{1}{2n^2}$, unless specifically mentioned otherwise. 
This time change is done to  ensure that the random walk heat kernel converges to that of Reflected Brownian motion; a fact, which will be used heavily throughout the article.
However, sometimes  considering discrete time random walk will be more convenient. It will be explicitly mentioned when we do so, and hence should not cause any confusion.

For any space, and a subset $A,$  we denote by $\mathbf{1}(A)(\cdot),$  the indicator function of the set $A,$ defined on the underlying space. 
  Since we  deal with several processes (the competitive erosion chain, random walk on $\U_n,$ Reflected Brownian motion on $\U$ etc), to avoid introducing too much notation, we use $\P(\cdot)$ and $\E(\cdot)$ to denote the corresponding probability measure and expectation associated  with  the underlying process, suppressing dependence on the process, in the notation. Following the same conventions, for any process and a subset $A$ of the corresponding state space, 
$\tau(A)$ will denote the hitting time of that set. 
The above convention poses no danger of confusion as the setting will always be clear from context, or explicitly mentioned.
Also for any set $A$ as above, and any  time $t,$  we  use,  
\begin{equation}\label{stopnot} 
\tau(A,t)
\end{equation} to denote the first hitting time of $A$ by the corresponding process, at or after time $t.$ Thus $\tau(A)=\tau(A,0).$
This notation will come in handy, since we  deal with several hitting times one after the other, for  e.g., : for two sets $A_1$ and $A_2$, the event $\{\tau (A_1)\le K_1\} \cap \{\tau(A_2,\tau(A_1))\le K_2\}$ will be used to denote the intersection of the events that $A_1$ is hit within the first $K_1$ steps and then from $\tau(A_1)$ onwards, the set $A_2$ is hit within the next  $K_2$ steps. 

Recall that the parameter $\delta$ was used to define the size of the blobs. We suppress the $\delta$ dependence in all our notations
for brevity. This will not cause any confusion, since we will not be using $\delta$ to denote any other quantity.
For any $x\in \U$ and $r>0,$ we  denote $B(x,r)\cap \frac{1}{n}\Z^2,$ by $B_n(x,r),$ where as mentioned before, $B(x,r)$ denotes the open euclidean ball of radius $r$ centered at $x.$ 
We will often use the same letter for a constant whose value may change from line to line. Also, $O(\cdot)$ and $\Theta(\cdot)$ are used to denote their usual meaning.  For easy reference, below we summarize some of the notations  (already defined or to be defined later), that will be used frequently throughout the article.

\begin{longtable}{@{}lll@{}}

  {\bf Notation}
  & {\bf \quad Defined in Section}
  & {\quad\quad\bf Short Informal Description}\\
\hline  
  $\U\, (\D)$
  & \quad\quad\quad\quad\quad \ref{para}
  &\quad  \quad General smooth domain (Unit Disc).\\

  $\U_n \,(\D_n)$
  & \quad\quad\quad\quad\quad \ref{para}
  &\quad  \quad  Graph obtained by discretizing the above\\ &&\quad  \quad domains.\\

  ${\U}_{\ge\beta}$ (${\U}_{\ge\beta,n}$)
  & \quad\quad\quad\quad\quad \ref{para} (\ref{glos1})
  &\quad  \quad  Region enclosed by Hyperbolic Geodesics, \\&&\quad\quad (Lattice version).\\

  ${\U}_{(\alpha)}$ (${\U}_{(\alpha),n}$)
  & \quad\quad\quad\quad\quad \ref{para} (\ref{quant1})
  &\quad \quad Area parametrization of $\U_{\ge\beta}$, \\&&\quad\quad (Lattice version). \\

    $x_{\rm{C}} \,(y_{\rm{C}})$
  & \quad\quad\quad\quad\quad \ref{para}
  &\quad \quad Source for color $\rm{C},$ (center of blob).   \\
 
  $\U_{\rm{C}}\, (\U_{{\rm{C}},n})$
  & \quad\quad\quad\quad\quad \ref{para}
  &\quad \quad Continuous (Discrete) blobs.\\

  $\mu_{\rm{C}}$
  & \quad\quad\quad\quad\quad \ref{para}
  &\quad \quad Uniform measure on $\U_{{\rm{C}},n}$.\\

  $B(x,r) \,(B_n(x,r))$
  & \quad\quad\quad\quad\quad \ref{para} (\ref{techass})
  &\quad \quad Euclidean ball (Lattice Ball).\\

   $\mathcal{G}_{\e}$    & \quad\quad\quad\quad\quad \ref{para} &\quad \quad  Set of all configuration where all the sites \\&&\quad\quad outside an  $\e$ band around   ${\rm{Geo}}_{\beta(\alpha)},$\\&&\quad\quad are of the expected color. \\

   $\{\varsigma_t\}$
  & \quad\quad\quad\quad\quad \ref{quant1}
  &\quad \quad Markov Chain on the space of colorings.\\

  $\cA_{\e}$    &  \quad\quad\quad\quad\quad\ref{quant1} &\quad \quad  Set of all configurations which has at most \\&&\quad\quad $\e n^2$ vertices with unexpected color in the \\&&\quad\quad red and blue regions.\\
  
   $S_{{\rm{C}}}(\sigma)$ & \quad\quad\quad\quad\quad \ref{sop} &\quad \quad Subset of vertices of color ${\rm{C}}$. \\

   ${\rm{Geo}}_{\beta}$
  & \quad\quad\quad\quad\quad \ref{mht}
  &\quad \quad Hyperbolic Geodesics.\\

  $G_n(\cdot)$ &   \quad\quad\quad\quad\quad\ref{mht} &\quad \quad Discrete Green function with\\&&\quad \quad Neumann boundary conditions.\\

  $\Gamma_{\e}$  & \quad\quad\quad\quad\quad \ref{glos1} &\quad\quad Set of all configurations where the  weight \\&&\quad\quad function is within $2\e n^2$ of its maximum.\\

  $\U^{(i),n}$
  & \quad\quad\quad\quad\quad \ref{glos1}
  & \quad\quad The domain $\U$ is subdivided into $O(n)$ shells\\&&\quad \quad of width $O(\frac{1}{n})$ each. \\

  $\Omega_{(\e)}$  & \quad\quad\quad\quad\quad\ref{glos1} &\quad \quad Set of all configurations where at most $\e n$\\&&\quad \quad different shells (defined above)  have sites \\&&\quad \quad of  unexpected  color.   \\

  $G_*(\cdot)$ &   \quad\quad\quad\quad\quad\ref{cinvariance} &\quad \quad Continuous Green function with\\&&\quad \quad Neumann boundary conditions. \\

  $R_{{\rm{C}}}$ & \quad\quad\quad\quad\quad \ref{efn} &\quad \quad Subset of color ${\rm{C}}$ connected by a \\ &&\quad\quad monochromatic path to $\U_{{\rm{C}},n}.$ \\

 $\sT_{({\rm{C}},\sigma)}$ &   \quad\quad\quad\quad\quad\ref{efn} &\quad \quad Exit time of $R_{{\rm{C}}}(\sigma)$.\\

$\mathring R_{{\rm{C}}}(\sigma)$ &  \quad\quad\quad\quad\quad\ref{geomdef} &\quad \quad   `Simply connected version' of $R_{{\rm{C}}}$.\\

 $\partial_{in} \text{ and } \partial_{out}$ &  \quad\quad\quad\quad\quad \ref{geomdef} &\quad \quad Inner and outer vertex boundary of subsets\\&&\quad \quad of vertices in $\U_n$.\\
 
   ${\rm{Ind}}_{\rm{C}}(\cdot)$& \quad\quad\quad\quad\quad \ref{pota100} &\quad\quad  Normalized indicator on $\U_{{\rm{C}},n}.$\\

${\rm{Ind}}(\cdot)$ &\quad\quad\quad\quad\quad \ref{pota100} &\quad\quad Difference of the indicators corresponding \\&& \quad \quad to the two colors.\\

  $G_{{\rm{C}},n}$ &   \quad\quad\quad\quad\quad\ref{stopdef1002} &\quad \quad Green function for random walk stopped\\&&\quad \quad on exiting $R_{\rm{C}}$. \\
   $G^*_{{\rm{C}},n}$ &   \quad\quad\quad\quad\quad\ref{weak102} &\quad \quad A variant of the above where the walk is also\\&&\quad \quad  killed  on hitting the inner boundary of $\mathring R_{{\rm{C}}}$.\\
 
 $\cE$ &   \quad\quad\quad\quad\quad\ref{weak102} &\quad \quad Energy of the flow induced by $G_n.$ \\
 $\cE_{{\rm{C}}}$ ($\cE^*_{{\rm{C}}}$)&   \quad\quad\quad\quad\quad\ref{weak102} &\quad \quad Energy of the flows corresponding to  \\
 && \quad \quad $G_{{\rm{C}},n}$ and $G^*_{{\rm{C}},n}$ respectively. \\
   
\hline
\end{longtable}

\section{Proof of Theorem \ref{quantmainresult}}\label{mht}

The goal of this section is to prove Theorem \ref{quantmainresult},  assuming some results whose proofs constitutes the technical bulk of this paper and are provided later. 

\begin{enumerate}
\item We first state the key technical result, Theorem \ref{negdrift123} establishing \eqref{e.drift}, and then some bounds for the weight function  and the Green function to be used in the proof of Theorem \ref{quantmainresult}. 
\item We then state and prove certain hitting time estimates for the competitive erosion chain in Subsection \ref{ht1000}, which are consequences of the above and a sub-martingale concentration inequality (Lemma \ref{lemm:azuma1}).  
\item  The proof of Theorem \ref{quantmainresult} then follows from a result relating hitting times of sets,  and stationary measure (see Lemma \ref{hitstation}).
\end{enumerate}

For technical convenience, we introduce a centering constant in the expression for the discrete Green function in \eqref{olddef} and hence work with the following slightly altered definition :
 For any $x\in \U_n,$ 
 \begin{equation}\label{dgf}
 G_n(x)=\frac{2n^2}{|\U_{{\rm{B}},n}|}\int_{0}^{\infty}\mathbb{P}_x(X(t)\in \U_{{\rm{B}},n})-\mathbb{P}_x(X(t)\in \U_{{\rm{R}},n})\,dt-c,
 \end{equation}
 where $c=c(\dd)$ is a centering constant we introduce to ensure that if $\dd$ is small and $n$ is large then the Green function is close to the function $
\frac{64}{\pi} \log\left|\frac{\psi(x)-i}{\psi(x)+i}\right|,$
(see Section \ref{TP}  Theorem \ref{convergence} and Lemma \ref{unifcon}). The centering constant  $c(\dd),$  can be written in terms of the Reflected  Brownian motion heat kernel on $\U$\footnote{If $\cB_t$ denotes Reflected Brownian motion, then $c$ is chosen such that the integral of the continuous Green function,
\begin{equation}\label{discdef}
G(z)=\frac{2}{\rm{ area}(\U_{\rm{B}})}\int_{0}^{\infty}[\P_{z}(\cB_{t} \in \U_{\rm{B}})-\P_{z}(\cB_{t} \in \U_{\rm{R}})]\,dt-c,
\end{equation} along $\partial \U$ is $0$.  For more details see \cite{tc}.},
but the explicit form of $c$ is not important in any of the arguments.
Also, let us define the  one parameter family of  hyperbolic geodesics  for any $\U,$ which form the boundary of the sets $\U_{\ge\beta}$ defined in \eqref{area12}: For any $\beta \in \mathbb{R},$ 
\begin{equation}\label{def2}
{\rm{Geo}}_{\beta}(\U):=\{z\in \U: \frac{64}{\pi}\log\left|\frac{\psi(z)-i}{\psi(z)+i}\right|=\beta \}.
\end{equation}
We will often suppress the dependence on $\U$ and denote the above, by ${\rm{Geo}}_{\beta}$, since the underlying domain will be clear from context. For more details regarding the hyperbolic metric see \cite{ahl1}.
\subsection{The main technical theorem and properties of the weight function.}\label{glos1} We will now state the key Theorem \ref{negdrift123}.
We start by defining the following `good set'. 
First, we divide our domain $\U$ according to the geodesics defined in \eqref{def2}.
Let $$a_{-k_1}>\ldots >a_{-1}>a_0>a_1>a_2>\ldots >a_{k_2},$$  be the maximal set of real numbers such that $d({\rm{Geo}}_{a_{i}}, {\rm{Geo}}_{a_{i+1}}) =\frac{100}{n},$ for all $i=-k_1,\ldots k_2-1$ and $a_0=\beta(\alpha)$ (see \eqref{area13}). 
Note that $k_1$ and $k_2$ are functions of the domain $\U,\beta(\alpha), n$ and that they grow linearly in $n.$ Moreover, since away from the points $x_{\rm{B}},x_{\rm{R}}$ on the boundary,  $\log \left|\frac{\psi(z)-i}{\psi(z)+i}\right|$ is a bounded function, with bounded non-zero derivative, as long as $d({\rm{Geo}}_{a_i},\{x_{\rm{B}},x_{\rm{R}}\}) \ge c$ for some $c>0$ (independent of $n$), 
\begin{equation}\label{order6}
|a_i-a_{i-1}|=\Theta (\frac{1}{n}),
\end{equation}
 where the constant in the $\Theta(\cdot)$ notation depend on $c$ and $\U$.  Let,
\begin{equation}\label{newnot1}
\U^{(i),n}:= \U_{\ge a_{i+1},n}\cap \U_{<a_{i},n} \text { (Figure \ref{f.dust10}.)}
\end{equation}
where for any $a\in \R,$  we let  $\U_{\ge a,n}:=\U_{\ge a}\cap \U_n$ and $\U_{<a,n}:=\U_{< a}\cap \U_n$ ($\U_{\ge a}$ and  $\U_{< a}$ were defined in \eqref{area12}).
Now given $\e>0$, define, 
\begin{equation}\label{optimalcnf1234}
\Omega_{(\e)}:=\Omega_{(\e),n}, 
\end{equation}
to be the set of all configurations $\sigma\in \Omega,$ such that 
there exists at most $\e n$ many negative $i's$ such that $\U^{(i),n}\cap \rm{S}_{\rm{R}} \neq \emptyset,$ (see \eqref{region} for definition of $\rm{S}_{\rm{R}}$)   and at most $\e n$ many positive $i's$ such that $\U^{(i),n}\cap \rm{S}_{\rm{B}}\neq \emptyset.$ Thus in words, $\Omega_{(\e)}$ consists of  configurations in $\Omega$ where at most $2\e n$ many shells $\U^{(i),n}$ contain vertices of  unexpected color.
\begin{figure}[hbt]
\centering
\includegraphics[scale=.64]{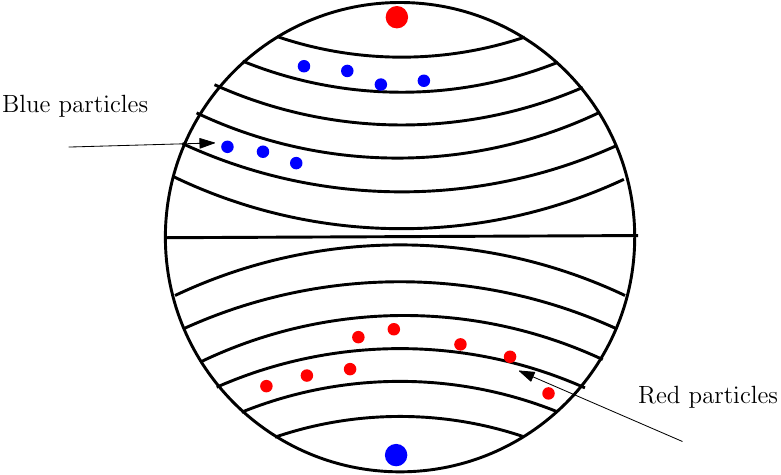}
\caption{The shells on the disc $\D,$ bounded by the hyperbolic geodesics are the $\D^{(i),n}$ 's for various values of $i.$ The figure illustrates a typical configuration in $\Omega_{(\e)}$ for small $\e,$ where only a few shells contain particles of unexpected color. }
\label{f.dust10}
\end{figure}
With the above definitions, we now state the key technical result in this paper establishing \eqref{e.drift}. 
\begin{thm}\label{negdrift123} In the setting of Theorem \ref{quantmainresult}, given $\e>0$, there exists  $a=a(\e) >0$ such that for all small enough $\dd$, and all  $n=2^{m}$ large enough,  (depending on $\dd$)
if $\sigma \notin \Omega_{(\e)}$  then, 
$$\E_{\sigma}({\rm{W}}(\varsigma_1)-{\rm{W}}(\varsigma_0))>a,$$
where $\{\varsigma_t\}$ is the competitive erosion chain.
\end{thm}
This shows that the weight function has a positive drift, if the configuration is outside the set $\Omega_{(\e)}$.
As mentioned before, the proof of the above is quite involved and  presented later (see  Section \ref{roadmap}).  Next, for a fixed domain $\U,$ we state a few useful properties of the Green function (see \eqref{dgf}) and the weight function ${\rm{W}}(\cdot)$,  whose proofs are deferred to  the Appendix.
The following result proves a uniform upper bound of the weight function independent of $\dd$. 
\begin{lem}\label{unibounded}
There exists a constant $D=D(\U)>0$ such that for all $\dd$ small enough, and $n=2^{m}>N(\dd)$, for all $\sigma \in \Omega,$
$|{\rm{W}}(\sigma)|\le  Dn^2.$
\end{lem}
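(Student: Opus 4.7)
The plan is to start from the crude inequality
\[
|w(\sigma)|\;\le\;\sum_{x\in B_1(\sigma)}|G_n(x)|\;\le\;\sum_{x\in \U_n}|G_n(x)|,
\]
and then show that the last sum is $O(n^2)$ with a constant depending only on $\U$ and $\alpha$ (not on $\delta$ or $n$). The comparison engine is the conformal limit
\[
\widetilde f(x):=\frac{64}{\pi}\log\left|\frac{\psi(x)-i}{\psi(x)+i}\right|,
\]
which, since $\psi$ is bi-Lipschitz up to and across $\partial\U$ (Remark i in Section \ref{commen1}), has only $\pm\log|x-x_i|$-type singularities at $x_1,x_2$ and is otherwise bounded on $\overline\U$. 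In particular $\widetilde f\in L^1(\U)$, and a Riemann-sum comparison between $\widetilde f$ on lattice cells and its integral gives $\sum_{x\in \U_n}|\widetilde f(x)|=O(n^2)$ with constant depending only on $\U$, using the 2D local integrability of $\log$.

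Fix a small $r>0$ and split the sum into a \emph{bulk} piece over $\U_n\setminus\bigl(B(x_1,r)\cup B(x_2,r)\bigr)$ and a \emph{near-endpoint} piece over $\bigl(B(x_1,r)\cup B(x_2,r)\bigr)\cap \U_n$. On the bulk, Theorem \ref{convergence} combined with Lemma \ref{unifcon} gives uniform convergence $G_n(x)=\widetilde f(x)+o(1)$ once $\delta$ is small and $n$ is large, so $|G_n|\le |\widetilde f|+1\le C(r,\U)$ there; summing over $|\U_n|=O(n^2)$ lattice points gives the desired $O(n^2)$ contribution. For the near-endpoint piece, outside the blobs $\U_{i,n}$ the function $G_n$ is discrete-harmonic, and classical estimates for the discrete Green function on $\tfrac1n\Z^2$ adjusted by the Neumann boundary condition (cf.~\cite{tc}) yield $|G_n(x)|\le C\bigl|\log|x-x_i|\bigr|+C$ uniformly in small $\delta$. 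Summing this logarithm over the $O(r^2n^2)$ lattice points in $B(x_i,r)\cap\U_n$ is once again $O(n^2)$ by 2D log-integrability.

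The main obstacle is keeping the near-blob estimate uniform in $\delta$. The danger is that the centering constant $c(\delta)$ in \eqref{dgf} grows like $\log(1/\delta)$ and could spoil uniformity; however, $c(\delta)$ is chosen precisely so that Theorem \ref{convergence} aligns $G_n$ with $\widetilde f$ in the bulk in a $\delta$-independent way, so it merely cancels the mean occupation-time contribution of the blobs without affecting the logarithmic growth profile near $x_i$. What remains is the contribution of lattice points that actually lie inside the blob $B(y_i,\delta/4)$: smoothing the log singularity over a source of radius $\delta/4$ keeps $|G_n|$ of order $\log(1/\delta)+O(\log n)$, but the number of such lattice points is only $O(\delta^2n^2)$, giving a contribution of order $\delta^2n^2(\log n+\log(1/\delta))$, which is $o(n^2)$ under the hypothesis $n>N(\delta)$. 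Adding the bulk, annular, and blob contributions yields $|w(\sigma)|\le Dn^2$ with $D=D(\U,\alpha)$ independent of $\delta$ and $n$, as required.
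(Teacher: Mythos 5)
Your overall strategy is correct in spirit and captures the same key idea as the paper, namely comparing $G_n$ with the conformal limit $\frac{64}{\pi}\log\left|\frac{\psi-i}{\psi+i}\right|$ and exploiting its local $L^1$ integrability near $x_1,x_2$. However, you take a different route from the paper, and one step in it is a genuine gap. The paper does not attempt a lattice-level decomposition at all: it passes entirely through the \emph{continuum} function $G_*$, using Theorem \ref{convergence} (uniform convergence $G_n\to G_*$), Lemmas \ref{unifcon} and \ref{step1} (which say $G_*$ is bounded, continuous, and $O(|\log\dd|)$), and a Riemann-sum approximation, then shows that $\int_\U |G_*|$ stays bounded as $\dd\to 0$ via change of variables and integrability of $\log\left|\frac{z-i}{z+i}\right|$ on the disc.

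The gap is your claim that, in the near-endpoint annulus outside the blob, ``classical estimates for the discrete Green function $\ldots$ yield $|G_n(x)|\le C|\log|x-x_i||+C$ uniformly in small $\dd$.'' This pointwise $\log|x-x_i|$-profile bound is a \emph{finer} statement than anything established in the paper or in \cite{tc}: the only pointwise bounds available there are Lemma \ref{step1} (the cruder $G_*(z)\asymp|\log\dd|$ for $d(z,x_i)\le\dd^a$, $a<1$) and Lemma \ref{maxbnd1} ($\sup_z|G_n(z)|\asymp|\log\dd|$). It is not ``classical'' for the Neumann Green function with a blob source of radius $\Theta(\dd)$ at distance $\Theta(\dd)$ from an analytic boundary, and would require its own proof. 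Fortunately you do not actually need the finer estimate: replace the fixed-radius split $|x-x_i|\lessgtr r$ by the $\dd$-dependent split $d(x,x_i)\lessgtr\dd^a$ (as the paper does). In the shrinking region $d(x,x_i)\le\dd^a$ the crude bound $|G_n|=O(|\log\dd|)$ from Lemma \ref{maxbnd1} gives a contribution $O(\dd^{2a}n^2|\log\dd|)=o(n^2)$; on the complement, Lemma \ref{unifcon} plus Theorem \ref{convergence} control $G_n$ by the integrable limit function and a Riemann-sum bound finishes the job. With that adjustment your argument aligns with, and in effect reproduces, the paper's proof.
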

For the next result which shows that the weight function is uniformly close to its maximum value, on the set $\cA_{\e}.$ Let, 
\begin{equation}\label{opt122}
{\rm{W}}_{\max}:=\sup_{\sigma \in \Omega} {\rm{W}}(\sigma).
\end{equation}
\begin{lem}\label{closelemm} For $\e>0$, there exists positive  constants $\dd_0(\e) ,\zeta(\e)$ such that for $\dd\le \dd_0$ and all large enough $n=2^{m}>N(\dd),$
\begin{equation}\label{close}
\inf_{\sigma\in \cA_{\e}}{\rm{W}}_{\sigma}\ge {\rm{W}}_{\max}-\zeta n^2,
\end{equation}
where $\zeta$ goes to $0$ with $\e$. 
\end{lem}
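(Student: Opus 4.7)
First I would fix a maximizer $\sigma^*\in\Omega$, so that $B_1(\sigma^*)$ consists of the $k=\lfloor\alpha|\U_n|\rfloor$ lattice points in $\U_n$ with the largest $G_n$-values. Write $f(z):=\frac{64}{\pi}\log\bigl|\frac{\psi(z)-i}{\psi(z)+i}\bigr|$; Theorem~\ref{convergence} (with the centering $c(\dd)$ in \eqref{dgf} chosen precisely for this purpose) says that $G_n$ approximates $f$ uniformly on $\U_n$ away from the blobs, once $\dd$ is small and $n$ is large. Since $\U_{(\alpha)}=\{f\ge\beta(\alpha)\}$ is a super-level set whose boundary is an analytic arc on which $|\nabla f|$ is bounded away from zero, the top-$k$ set for $G_n$ agrees with $\U_{(\alpha)}\cap\U_n$ except on a strip around $\partial\U_{(\alpha)}$ of area $o(1)$, giving $|B_1(\sigma^*)\triangle(\U_{(\alpha)}\cap\U_n)|=o(\e n^2)$ for $\dd$ small and $n>N(\dd)$.

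Now fix $\sigma\in\cA_\e$. By \eqref{optimalconf}, $|B_1(\sigma)\triangle(\U_{(\alpha)}\cap\U_n)|\le 2\e n^2$, so the triangle inequality gives $|A|\le 3\e n^2$ for $A:=B_1(\sigma)\triangle B_1(\sigma^*)$. Since the two blue sets have the same cardinality $k$, expanding yields
\[w_{\max}-w(\sigma)=\sum_{u\in B_1(\sigma^*)\setminus B_1(\sigma)} G_n(u)\;-\sum_{v\in B_1(\sigma)\setminus B_1(\sigma^*)} G_n(v)\ \le\ \sum_{x\in A}|G_n(x)|,\]
so the remaining task is to show the right-hand side is at most $\zeta(\e)n^2$ with $\zeta(\e)\to 0$.

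To do this I would pass to the continuum. Replacing $G_n$ by $f$ on $\U_n$ outside the blobs costs an $o(n^2)$ error in $\sum_A|G_n|$ by Theorem~\ref{convergence}, and a Riemann-sum approximation yields
\[\sum_{x\in A}|G_n(x)|\ \le\ n^2\sup_{|\tilde A|\le 3\e}\int_{\tilde A}|f(z)|\,dz\;+\;o(n^2).\]
The Schwarz-reflection bi-Lipschitz property of $\psi$ noted in Section~\ref{commen1}(i) gives the pointwise bound $|f(z)|\le C_\U(1+|\log d(z,\{x_1,x_2\})|)$, so the supremum is attained by concentrating $\tilde A$ near $x_1$ and $x_2$. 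The standard identity $\int_0^r\rho\,|\log\rho|\,d\rho\asymp r^2\log(1/r)$ with $r^2\asymp\e$ then gives $\int_{\tilde A}|f|=O(\e\log(1/\e))$, whence $\zeta(\e):=C\,\e\log(1/\e)\to 0$ works.

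The main obstacle is handling the blob regions $\U_{i,n}$, where $G_n$ is not close to $f$ (there it satisfies $\Delta G_n=\mu_1-\mu_2$) and where $|G_n|$ can be as large as $C\log(1/\dd)$. Their contribution to $\sum_A|G_n(x)|$ is at most $|\U_{i,n}|\cdot C\log(1/\dd)=O(\dd^2 n^2\log(1/\dd))$, which is absorbed into the $o(n^2)$ error once $\dd_0(\e)$ is taken sufficiently small (for instance $\dd_0^2\log(1/\dd_0)\le\e\log(1/\e)$); crucially this threshold depends only on $\e$, not on $n$. A secondary technicality is verifying that the boundary-strip estimate in the first paragraph genuinely gives $o(\e n^2)$ rather than $O(n)$, which requires the quantitative rate in Theorem~\ref{convergence} to dominate $1/n$ and is exactly what the condition $n>N(\dd)$ provides.
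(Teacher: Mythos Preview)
Your proposal is correct and follows essentially the same route as the paper: both arguments reduce $w_{\max}-w(\sigma)$ to a sum $\sum_{x\in A}|G_n(x)|$ over a set $A$ of size $O(\e n^2)$, and then invoke uniform integrability of the limit $f(z)=\frac{64}{\pi}\log\bigl|\frac{\psi(z)-i}{\psi(z)+i}\bigr|$ (together with the $O(|\log\dd|)$ bound near the blobs) to conclude this sum is at most $\zeta(\e)n^2$ with $\zeta(\e)\to 0$.

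The one structural difference is your choice of reference configuration. You take the genuine maximizer $\sigma^*$ and argue that its blue set nearly coincides with $\U_{(\alpha)}\cap\U_n$; this costs you the extra paragraph about level sets of $G_n$ versus $f$ and the strip around $\partial\U_{(\alpha)}$. The paper instead takes as reference any configuration $\sigma_*$ with $B_1(\sigma_*)=\U_{(\alpha)}\cap\U_n$ up to $O(n)$ corrections, and shows directly via Riemann sums that $|w(\sigma_*)-w_{\max}|\le cn^2$ for arbitrarily small $c$; this sidesteps the need to locate the maximizer and is a bit cleaner. Either way the endgame is identical, and your explicit rate $\zeta(\e)=C\e\log(1/\e)$ is sharper than what the paper states.
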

The next lemma reflects the `logarithmic singularity' of the Green function by determining the rate of blow up of the latter, as $\dd$ goes to $0$.   
\begin{lem}\label{maxbnd1} There exists a constant $C>0,$  such that for all small enough $\dd,$ and all large enough $n=2^m>N_0(\delta),$
\begin{equation}\label{logorder}
C^{-1} |\log(\dd)| \le \sup_{z\in\U_n}|G_{n}(z)|\le C |\log(\dd)|.
\end{equation}
\end{lem}

Given a number $\e>0,$ let,
\begin{equation}\label{def1}
\Gamma_{\e}=\Gamma_{\e,n}:=\{\sigma\,\in \Omega: {\rm{W}}({\sigma})\ge {\rm{W}}_{\max}-2\e n^2\}.
\end{equation}
\begin{lem}\label{rem2} Given $\e>0,$  there exists $0< \e_2<\e < \e_1$,  such that for all $\dd<\dd_0(\e),$ and $n=2^m\ge N(\dd),$ $$ \Gamma_{\e_2}\subset \cA_{\e}\subset \Gamma_{\e_1}.$$ 
Moreover, $\cG_{\e}\subset \Omega_{(c\e)},$ for some $c=c(\U)$, and there exists $\e_3>0$, such that $\Omega_{(\e_3)} \subset 
\cA_{\e}.$ Also, $\e_1,\e_2,\e_3$ all converge to $0$ as $\e$ goes to $0.$ 
\end{lem}

Thus, the above lemma strengthens Lemma \ref{closelemm}. It says that not only does the weight function stay close to its maximum on $\cA_{\e}$, but $\cA_{\e}$ is roughly a level set of the weight function, i.e., the actual level sets $\Gamma_{\e}$'s can be approximated by the  sets $\cA_{\e}$'s and vice versa.
The proofs of Lemmas \ref{unibounded}, \ref{closelemm}, \ref{maxbnd1}, and  \ref{rem2} are provided in Appendix \ref{pol}. 

\subsection{Hitting time estimates}\label{ht1000}
For any arbitrary  Markov chain $\chi(\cdot)$ on the state space $\cM$ and for a particular subset $A\subset \cM$, a general approach to showing that the stationary measure of $A$ is large, consists of the following two steps:
 \begin{enumerate} 
 \item Show that starting from any state in $\cM$, the chain $\chi(\cdot)$ hits  a subset of the set $A$ (thought of as the set of all states in $A$ which are away from the boundary), quickly.
 \item Show that once such a subset has been reached, the process $\chi(\cdot)$ stays inside $A$ for a long time. 
 \end{enumerate} 
The following result formalizes the above by choosing $A_1\subset A$ as the interior and $A_2=\cM \setminus A$. 
\begin{lem}\cite[Prop 1.4]{hit12}\label{hitstation} 
Let $\chi(\cdot)$ be an irreducible Markov chain on a finite state space $\mathcal{M}$. Suppose $A_1, A_2 \subset \mathcal{M}$. Let $t_1,t_2$ be the maximum expected hitting time of $A_1$ and $A_2$ respectively for $\chi(\cdot),$ i.e.,
\begin{align}
\label{hyp1}
t_1 &= \max_{x \in A_2}\E_{x}(\tau(A_1)), \\
\label{hyp2}
t_2 &= \min_{x \in A_1}\E_{x}(\tau(A_2)).
\end{align}
Then, $$\nu(A_2)\le \frac{t_1}{t_1+t_2},$$
where $\nu$ is the stationary distribution of the Markov chain $\chi(\cdot)$ on $\mathcal{M}.$
 \end{lem}
The next result proves a uniform bound on the hitting time of the set $\cA_{\e},$ for the competitive erosion chain, and is the first step, in carrying out the above approach for the proof of Theorem \ref{quantmainresult}. However for later applications (see Lemma \ref{linehit2}), we work with sets of the form $\cA_{\e_1}\cap \Omega_{(\e_2)},$ instead of just $\cA_{\e}$.
\begin{lem}\label{hitting} Given $\e_1>0$ and $\e_2>0 $, there exists $\dd_0(\e_1,\e_2),$ such that for all $\dd\le \dd_0$, there exists $C,d>0,$ such that for all  $n=2^{m}>N(\dd),$ for all  $\sigma\in \Omega,$ 
\begin{align}
\label{ht1}
 \mathbb{P}_{\sigma}(\tau(\cA_{\e_1}\cap \Omega_{(\e_2)}) \ge Cn^2) & \le  e^{-{d}n^2},\\
\nonumber
 \E_{\sigma}\left(\tau(\cA_{\e_1}\cap \Omega_{(\e_2)})\right)&\le Cn^{2},
\end{align} 
where $\dd$ is the parameter appearing in Section \ref{para}.
 \end{lem}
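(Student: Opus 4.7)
The plan is to use the drift condition from Theorem \ref{negdrift123} together with the uniform bounds on $w$ (Lemmas \ref{unibounded} and \ref{maxbnd1}) in a standard Lyapunov / Azuma argument. First I would reduce to hitting a simpler set: by the last assertion of Remark \ref{rem2}, given $\e_1 > 0$ there exists $\e_3 = \e_3(\e_1) > 0$ with $\Omega_{(\e_3)} \subset \cA_{\e_1}$. Setting $\e' := \min(\e_2, \e_3)$, we have $\Omega_{(\e')} \subset \cA_{\e_1} \cap \Omega_{(\e_2)}$, so it is enough to bound the hitting time $T := \tau(\Omega_{(\e')})$.

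For the core argument, I would work with the submartingale $M_t := w(\sigma_t) - a t$ stopped at $T$. By Theorem \ref{negdrift123} there is $a = a(\e') > 0$ such that $\E[w(\sigma_{t+1}) - w(\sigma_t) \mid \sigma_t] \ge a$ whenever $\sigma_t \notin \Omega_{(\e')}$, and since one full step of the chain alters the blue set by at most two vertices, Lemma \ref{maxbnd1} gives
\[ |w(\sigma_{t+1}) - w(\sigma_t)| \le 2 \sup_{z \in \U_n} |G_n(z)| \le K |\log \dd| \]
for some absolute $K$ and all large $n$. Writing the Doob decomposition $w(\sigma_t) - w(\sigma_0) = N_t + A_t$, we have $A_t \ge a t$ on $\{T > t\}$, while the martingale increments of $N_t$ are bounded by $(K+1)|\log \dd|$.

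Next I would apply Azuma's inequality to conclude. Fix $C > 4 D / a$, where $D$ is the constant from Lemma \ref{unibounded}; then on $\{T > Cn^2\}$ we have, using $|w| \le D n^2$,
\[ N_{Cn^2} \le (w(\sigma_{Cn^2}) - w(\sigma_0)) - a C n^2 \le 2 D n^2 - a C n^2 \le -\tfrac{aC}{2} n^2. \]
Applying Azuma to the stopped martingale $N_{t \wedge T}$ at time $Cn^2$ with step bound $(K+1)|\log \dd|$ yields
\[ \PP_\sigma(T > C n^2) \le 2 \exp\!\left( -\frac{(aC/2)^2 n^4}{2 C n^2 (K+1)^2 |\log \dd|^2} \right) \le e^{-d n^2} \]
for some $d = d(\e_1, \e_2, \dd) > 0$, uniformly in the starting configuration $\sigma$. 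By the Markov property and this uniformity, $\PP_\sigma(T > k C n^2) \le e^{-k d n^2}$, whose tail sum gives $\E_\sigma[T] \le 2 C n^2$ for large $n$, delivering both conclusions of the lemma.

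The only non-routine input is Theorem \ref{negdrift123}, whose proof is deferred to Section \ref{efn}; once the drift is granted, the argument above is essentially mechanical. The sole point of care is that the Azuma step forces $d$ to depend on $\dd$ through the factor $|\log \dd|^2$ in the denominator, but this is consistent with the statement of the lemma since $\dd$ is fixed before sending $n \to \infty$.
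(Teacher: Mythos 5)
Your proof is correct and follows essentially the same route as the paper: the paper likewise applies Azuma's inequality to the weight process $w(\sigma_t)$, with the drift from Theorem~\ref{negdrift123}, the global bound from Lemma~\ref{unibounded}, the increment bound from Lemma~\ref{maxbnd1}, and the reduction to a single set $\Omega_{(\e')}$ via Remark~\ref{rem2}; the only difference is that the paper packages the Azuma/Doob step as a citation of the general hitting-time Lemma~\ref{lemm:azuma1}, whereas you re-derive it inline.
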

In the next result, we show that once the process $\{\varsigma_{t}\}$ has hit $\cA_{\e},$ it tends to stay ``close" to the set for a long time. By Lemma \ref{rem2}, this is implied by the next result.
\begin{lem}\label{goodbad} Given $\e_1>0,$ there exists $\dd_{0}(\e_1),$ such that for all $\dd\le \dd_0,$  there exists  $d=d(\e_1,\dd)>0,$ such that for all large enough $n=2^{m}>N(\dd),$ and any $\sigma\in \Omega,$ 
 $$\inf_{\sigma\in \Gamma_{\e_1}}\mathbb{P}_{\sigma}(\tau(\Omega \setminus \Gamma_{2\e_1})>e^{dn^2})\ge 1-e^{-dn^2}.$$
 \end{lem}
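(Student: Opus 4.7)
The plan is to view $Y_t := w_{\max} - w(\sigma_t) \ge 0$ as a nonnegative process with strongly negative drift outside a small ``good'' set $\Omega_{(\e_3)}$ and bounded increments, starting from $Y_0 \le 2\e_1 n^2$ (by $\sigma_0 \in \Gamma_{\e_1}$); since leaving $\Gamma_{2\e_1}$ requires $Y_t > 4\e_1 n^2$, a one-sided exponential supermartingale bound controls the probability of such an excursion occurring in time $e^{dn^2}$.

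Concretely, using Remark~\ref{rem2} pick $\e_3 = \e_3(\e_1) > 0$ small enough that $\Omega_{(\e_3)} \subset \Gamma_{\e_1/4}$, so $Y_t \le \e_1 n^2/2$ whenever $\sigma_t \in \Omega_{(\e_3)}$. Theorem~\ref{negdrift123} then yields $a = a(\e_3) > 0$ with $\E(Y_{t+1}-Y_t \mid \sigma_t) \le -a$ for every $\sigma_t \notin \Omega_{(\e_3)}$, while Lemma~\ref{maxbnd1} (together with the observation that one step of the chain alters the blue set by at most two vertices) yields the uniform increment bound $|Y_{t+1} - Y_t| \le C_0 := C|\log \dd|$, independent of $n$.

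Set $T := \tau(\Omega \setminus \Gamma_{2\e_1})$ and $N := e^{dn^2}$. Decompose by last visit to $\Omega_{(\e_3)}$: for each $t_0 \in \{-1, 0, \ldots, N-1\}$ let $F_{t_0}$ be the event that $T \le N$ and $t_0$ is the largest index strictly below $T$ with $\sigma_{t_0} \in \Omega_{(\e_3)}$ (with $t_0 = -1$ denoting no such visit). On $F_{t_0}$, the state $\sigma_s \notin \Omega_{(\e_3)}$ for every $s \in (t_0, T]$ and $Y_{t_0+1} \le 2\e_1 n^2 + C_0$ (using either $\sigma_0 \in \Gamma_{\e_1}$ if $t_0 = -1$, or a single step from $\Omega_{(\e_3)}$), hence $\sup_{t_0<s\le T}(Y_s - Y_{t_0+1}) \ge \e_1 n^2$ for $n$ large. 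Letting $\tau' := \inf\{s > t_0+1 : \sigma_s \in \Omega_{(\e_3)}\}$, the standard Hoeffding estimate for bounded-increment supermartingales gives $\E\bigl[e^{\theta(Y_{s+1}-Y_s)} \mid \sigma_s\bigr] \le 1$ on $[t_0+1, \tau']$ with $\theta := a/C_0^2$, so $e^{\theta(Y_s - Y_{t_0+1})}$ is a supermartingale there; Doob's maximal inequality yields $\PP(F_{t_0}) \le e^{-\theta \e_1 n^2} =: e^{-\kappa n^2}$ with $\kappa = a\e_1/C_0^2 > 0$. A union bound over the $N+1$ candidate values of $t_0$ gives $\PP(T \le N) \le (N+1) e^{-\kappa n^2}$, which is at most $e^{-dn^2}$ for $d := \kappa/3$ and $n$ large; this is precisely the conclusion of the lemma.

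The main technical obstacle is formalizing the last-visit decomposition so that the one-sided exponential estimate applies cleanly on each excursion via a deterministic union bound over the random index $t_0$; the unavoidable $\dd$-dependence of $d$ comes from the logarithmic singularity $\sup_z |G_n(z)| \asymp |\log \dd|$ of Lemma~\ref{maxbnd1}, which enters quadratically through $C_0^2$ in $\kappa$.
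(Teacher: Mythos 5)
Your proof is correct and follows the same underlying strategy as the paper: the drift from Theorem~\ref{negdrift123} (applicable off $\Omega_{(\e_3)}$ via Remark~\ref{rem2}) plus the increment bound from Lemma~\ref{maxbnd1}, combined through an Azuma/Hoeffding-type exponential-martingale estimate for the weight process $w(\sigma_t)$. The only difference is presentational: the paper simply plugs these ingredients into the packaged hitting-time bound Lemma~\ref{lemm:azuma1}~ii.\ (from \cite{cyl}), whereas you re-derive that estimate from scratch via a last-visit decomposition and a one-sided exponential supermartingale, reaching the same quantitative conclusion with the same $\dd$-dependence of $d$ through $C_0 \asymp |\log\dd|$.
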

The above results follow from a general lemma we state next, about hitting times for sub-martingales. This is a consequence of the Azuma-Hoeffding concentration estimates for 
sub-\\
martingales. The statement of the lemma has a few parameters and could be difficult to parse. However, for subsequent applications it would  be helpful as it would allow us to  plug in various values for the parameters. 
Let $\{\chi(t)\}_{t\ge 0}$ be a discrete time stochastic process taking values in an abstract set $\mathcal{D}.$ Also, let $g:\mathcal{D} \rightarrow \R$ be a real valued function and $\mathcal{F}_t$ be the filtration generated by the process $\chi(\cdot)$ up to time $t$. 
\begin{lem}\cite[Lemma 10]{cyl}\label{lemm:azuma1}
Let $a_1,a_2,a_3 >0$. Suppose,   
\begin{eqnarray}\label{hyp11}
|g(x)| & \le & a_1 \mbox{ for all } x  \in \mathcal{D},\\
\label{hyp22}
|g(\chi(t))-g(\chi(t-1))| & \le & a_2 \mbox{ for all } t.
\end{eqnarray}
Also, suppose that $\cB \subset \mathcal{D}$  is such that for any time $t,$ 
\begin{eqnarray}\label{submart1}
\E\left(g(\chi(t))-g(\chi(t-1))|\mathcal{F}_{t-1},\chi(t-1)\notin \cB\right)& \ge & a_3.
\end{eqnarray} 
Then, 
\begin{itemize}
\item [i.] $\E_{x}(\tau(\cB))\le \frac{2a_1}{a_3},$ for all $x\in \cD.$ Moreover, for any $a_4, T>0$ such that $a_4-a_3T <0$, we have $$P_{x}(\tau(\cB) \ge T) \le \exp\left({-\frac{{(a_4-a_3T)}^2}{4a_2^2T}}\right),$$  for all $x \in \mathcal{D}$ such that $ g(x)\ge a_1-a_4.$\\
\item [ii.]  Suppose for some $a_5>2a_2$,  we have $\cB=\{x\in \cD: g(x)\ge a_1-a_5\}$ and $\cB'=\{x\in \cD: g(x)\le a_1-2a_5\}.$   Then for all $x \in \cB,$ and all $T> \frac{2a_2}{a_3},$
$$\mathbb{P}_{x}(\tau(\cB') \ge T') \ge  1-\left[\exp\left({-\frac{a_5^2 }{32a_2^2T}}\right)+\exp\left({-\frac{a_3^2 T^2}{32a_2^2T}}\right)\right],
$$  where $T' =\exp\left({\frac{\min( a_5^2,a_3^2 T^2 )}{32a_2^2T}}\right).$
\end{itemize} 
 \end{lem}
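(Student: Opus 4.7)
The plan is to reduce both parts to Azuma--Hoeffding applied to the Doob-compensated martingale
\begin{equation*}
M_t := X_t - X_0 - \sum_{s=1}^{t} D_s, \qquad D_s := \E\bigl(X_s - X_{s-1} \mid \mathcal{F}_{s-1}\bigr).
\end{equation*}
Hypothesis \eqref{hyp22} gives $|D_s| \le A_2$ and hence $|M_t - M_{t-1}| \le 2 A_2$, while \eqref{submart1} says $D_s \ge 0$ everywhere and $D_s \ge a_1$ whenever $\omega(s-1) \notin B$. In particular $X_t$ is itself a submartingale whose drift is boosted by $a_1$ off of $B$.

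For Part~(i) I would apply optional stopping to $X_t$ at $\tau := \tau(B)$. The drift is at least $a_1$ at every step strictly before $\tau$, so
\begin{equation*}
a_1 \E\bigl[t \wedge \tau\bigr] \;\le\; \E\bigl[X_{t \wedge \tau}\bigr] - X_0 \;\le\; A_1 - (A_1 - a_2) \;=\; a_2,
\end{equation*}
and letting $t \to \infty$ gives $\E_\omega \tau \le a_2/a_1$. For the tail, on $\{\tau \ge T\}$ the relation $\sum_{s=1}^{T} D_s \ge a_1 T$ combined with $X_T \le A_1$ and $X_0 \ge A_1 - a_2$ forces $M_T \le a_2 - a_1 T$. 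When $a_1 T > a_2$ this is a large lower-tail event for a martingale whose increments are bounded by $2 A_2$, and Azuma--Hoeffding applied to the stopped martingale $M_{T \wedge \tau}$ produces the claimed exponential bound.

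For Part~(ii) the strategy is an excursion decomposition. Let $\sigma_1 < \rho_1 < \sigma_2 < \rho_2 < \cdots$ be the successive exit times from $B$ and return times to $B$, starting from $\omega \in B$. Because $|X_t - X_{t-1}| \le A_2$ and $a_4 > 2 A_2$, one has $X_{\sigma_k} \ge A_1 - a_4 - A_2 > A_1 - 2 a_4$, so each excursion begins with a cushion of at least $a_4 - A_2 \ge a_4/2$ above $B'$. Inside a single excursion two bad events can route the process into $B'$: (a) the excursion survives longer than $T$ steps, which by Part~(i) applied with parameter $a_4 + A_2$ in place of $a_2$ (and using the hypothesis $T > 2 A_2/a_1$ to absorb constants) occurs with probability at most $\exp(-a_1^2 T /(32 A_2^2))$; and (b) during its first $T$ steps the martingale $M$ falls by more than $a_4/2$, which by Doob's maximal inequality applied to $-M$ stopped at $\rho_k \wedge \tau(B')$ occurs with probability at most $\exp(-a_4^2/(32 A_2^2 T))$. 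Neither happening in any of the at most $T'$ excursions that can fit in $[0,T']$, and checking that the specific choice $T' = \exp(\min(a_4^2, a_1^2 T^2)/(32 A_2^2 T))$ is calibrated so that multiplying the per-excursion failure probability by $T'$ still yields the displayed bound, finishes the proof.

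The main technical obstacle is the bookkeeping in Part~(ii): one must ensure that the union-bound prefactor over excursions is actually swallowed by the exponentials, which is why the exponent in the definition of $T'$ is the minimum of $a_4^2$ and $a_1^2 T^2$---these two quantities correspond exactly to the two failure modes (b) and (a), and $T'$ is set to balance them. Part~(i) is routine, while (ii) requires a careful stopping of the martingale at $\rho_k \wedge \tau(B')$ so that the bounded-increment Azuma inequality applies to a genuinely bounded time horizon even though the underlying excursion length is itself random.
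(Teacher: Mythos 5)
The paper cites this lemma verbatim from \cite[Lemma 9]{cyl} and does not reproduce the proof, so there is no in-paper argument to compare against; I evaluate your plan on its own terms. Part~(i) is correct: the Doob decomposition $M_t = X_t - X_0 - \sum_{s\le t} D_s$, with $D_s\ge 0$ everywhere, $D_s\ge a_1$ off $B$, and $|M_t-M_{t-1}|\le 2A_2$, together with optional stopping for the mean and Azuma on the event $\{\tau\ge T\}$, gives exactly what is claimed up to the value of the absolute constant in the exponent. For Part~(ii) the excursion skeleton (cushion $a_4-A_2>a_4/2$ at the start of each excursion, control the excursion duration and the maximal martingale drop, union bound over at most $T'$ excursions) is the right idea.

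There is, however, a gap in your excursion-length estimate~(a). You invoke Part~(i) with $a_2=a_4+A_2$, but Part~(i) requires $a_2-a_1T<0$, i.e.\ $a_1T>a_4+A_2$; since $a_4>2A_2$ this is strictly stronger than the stated hypothesis $T>2A_2/a_1$, which gives only $a_1T>2A_2$, so the bound simply is not available in the regime $2A_2<a_1T\le a_4+A_2$ (and even where it is available, the exponent $(a_4+A_2-a_1T)^2/(4A_2^2T)$ degenerates as $a_1T\downarrow a_4+A_2$ and cannot be replaced by $a_1^2T/(32A_2^2)$ without a further restriction). The cure is to compare against the $B$-threshold rather than the excursion's starting value: while the excursion is alive, $\omega(\sigma_k+T)\in B^c$, so $X_{\sigma_k+T}<A_1-a_4\le X_{\sigma_k}+A_2$ and hence $X_{\sigma_k+T}-X_{\sigma_k}<A_2$; combined with the accumulated drift $\ge a_1T$ this forces $M_{\sigma_k+T}-M_{\sigma_k}<A_2-a_1T<-a_1T/2$, where $T>2A_2/a_1$ is exactly the input needed, and Azuma then yields a bound depending only on $a_1,T,A_2$. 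A secondary but real issue is constant tracking: the crude bound $|\Delta M_s|\le 2A_2$ gives per-excursion failure probabilities of the form $\exp(-a_4^2/(32A_2^2T))$, which after multiplying by $T'=\exp(a_4^2/(32A_2^2T))$ is only $O(1)$. You need the interval-width form of Hoeffding's inequality (the increment $\Delta M_s$ ranges over an interval of length $2A_2$), which sharpens the per-excursion exponent by a constant factor, so that the union bound over $T'$ excursions still leaves something exponentially small.
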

Using the above result, the proofs of Lemmas \ref{hitting} and \ref{goodbad} follow easily.
\begin{proof}[Proof of Lemma \ref{hitting}]  The proof follows from Lemma \ref{lemm:azuma1} i.
The stochastic process we consider, is the competitive erosion chain $\{\varsigma_{t}\},$ and $g:={\rm{W}}(\sigma)$ is the weight function defined in \eqref{wf}. 
Using Lemma \ref{rem2}, let $\e_3$ be such that $\Omega_{(\e_3)}\subset \cA_{\e_1}.$
We make the following choices of parameters: 
\begin{align*}
\cB & =  \cA_{\e_1}\cap \Omega_{(\e_2)},\\
a_1 &= D n^2, \quad \quad \mbox{ appearing in Lemma \ref{unibounded}},\\
a_2 &= C \log(\dd), \quad \mbox { for a large enough universal constant } C,\\
a_3 &= \min (a(\e_2),a(\e_3)),    \quad \quad  \mbox{where } a(\cdot) \mbox{ appears in Theorem }\ref{negdrift123}, \\
a_4  &= 2Dn^2, \quad T   = \frac{(3D+1)n^2}{a_3}.
\end{align*}
 Thus, \eqref{hyp11} is satisfied by Lemma \ref{unibounded}.
That \eqref{hyp22} is satisfied by our choice of $a_2,$ follows from Lemma \ref{maxbnd1}.  The drift condition \eqref{submart1}, is satisfied by Theorem \ref{negdrift123} and our choice of $\e_3.$ Thus by Lemma \ref{lemm:azuma1} i., the result follows. 
\end{proof}
\begin{proof}[Proof of Lemma \ref{goodbad}]
The proof follows from Lemma \ref{lemm:azuma1} ii. by considering the same process as in the proof of the above lemma, and the following choice of parameters:
\begin{align*}
\cB = \Gamma_{\e_1},&\quad \cB'  =  \Gamma_{2\e_1}, \quad  a_5 = \e_1 n^2,  \quad T  = n^2,\\
a_1 & = {\rm{W}}_{\max},\quad ( \text{see } \eqref{opt122}),\\
a_2 &= C \log(\dd), \,\,\mbox { for a large enough universal constant } C,\\
a_3 &= a(\e_3),
\end{align*}
where $\e_3$ is such that $\Omega_{(\e_3)}\subset \Gamma_{\e_1}.$
Now, by the above choice of parameters $T'=\frac{1}{2}e^{\frac{\min( \e_1^2,a_3^2 )n^2}{16}},$ and by Lemma \ref{lemm:azuma1} ii. for all $\sigma \in \Gamma_{\e_1},$
$$\mathbb{P}_{\sigma}(\tau(\Gamma_{2\e_1})\ge T')\ge  1-[e^{-\frac{\e_1^2 n^2 }{16}}+e^{-\frac{a(\e)^2 n^2}{16}}].$$
Hence the proof is complete. 
\end{proof}
\begin{proof}[Proof of Theorem \ref{quantmainresult}] 
Equipped with the previous hitting time results, we finish off the proof of Theorem \ref{quantmainresult}, using Lemma \ref{hitstation}.
Notice that, by the lower containment in Lemma \ref{rem2}, it suffices to prove that for a given small enough $\e,$ for all large enough $n=2^{m}$,
\begin{equation}\label{suff1}
\pi(\Gamma_{2\e})\ge 1-e^{-cn^2},
\end{equation}
for some $c=c(\e,\dd,\U)> 0.$ 
The proof, now follows immediately from Lemma \ref{hitstation}, with the following choices of the parameters:
\begin{align*}
A_1  = \Gamma_{\e}, 
A_2  = \Omega \setminus \Gamma_{2\e},
t_1  = d_1n^2,
t_2 = e^{d_2n^2},
\end{align*}
where $c_1,c_2,d_1 ,d_2$ are chosen such that the hypotheses \eqref{hyp1} and \eqref{hyp2} of Lemma \ref{hitstation} are satisfied. Lemmas \ref{hitting} and \ref{goodbad} allow us to do that.  
Thus the proof of Theorem \ref{quantmainresult} is complete.
\end{proof}
\section{Properties of the Green function}\label{TP}  
In this section, 
we state asymptotic properties of  the discrete Green function, including asymptotic conformal invariance. 
\subsection{The Discrete Green Function}\label{dg}

The integral in $G_n$ in \eqref{dgf}, is absolutely integrable by the following  lemma.
\begin{lem}\cite[Lemma 3.1]{tc}\label{welldefined} For any domain $\U,$ there exists a constant $D=D(\U)$ such that, for all large enough $n$,
$$\sup_{x\in \U_n}|\mathbb{P}_x(X(t)\in \U_{{\rm{B}},n})-\mathbb{P}_x(X(t)\in \U_{{\rm{R}},n})|\le 2e^{-Dt}$$ 
for all $t\ge 0$, where $X(t)$ is the continuous time random walk on $\U_n$.
\end{lem}
The above is a consequence of the fact that random walk on $\U_n$ converges to the equilibrium measure exponentially fast, and the fact that $\pi_{RW}(\U_{{\rm{B}},n})=\pi_{RW}(\U_{{\rm{R}},n}),$ where $\pi_{RW}$ is the stationary measure for the random walk on $\U_n.$ The last fact follows, since by choice $|\U_{{\rm{B}},n}|=|\U_{{\rm{R}},n}|,$ (see subsection \ref{commen1} ii.), and all the vertices in $\U_{{\rm{B}},n} \cup \U_{{\rm{R}},n}$ have degree four. 

A crucial operator in what follows, is the discrete Laplacian $\Delta$, which acts on  any function $f:\U_n \rightarrow \mathbb{R}$ and produces $\Delta f: \U_n \rightarrow \mathbb{R}$ defined as: for any $x \in \U_n,$ 
 \begin{equation}\label{lapnot1}
 \Delta f(x)=f(x)-\frac{1}{d_x}\sum_{y\sim x}f(y),
\end{equation}
 where $d_x$ is the degree of the vertex $x$ and $y\sim x$ denotes that $y$ is a neighbor of $x$ in the graph $\U_n.$  The next lemma concerns the Laplacian of the Green function.
\begin{lem}\label{laplacian98} \cite[Lemma 3.2]{tc} The Laplacian of the Green function, $\Delta G_n,$ satisfies, 
\begin{equation}\label{laplacian}
\Delta G_n =\frac{1}{| \U_{{\rm{B}},n}|} (\mathbf{1}( \U_{{\rm{B}},n})-\mathbf{1}( \U_{{\rm{R}},n})),
\end{equation}
where for any subset $A\subset \U_n$, $\mathbf{1}(A)$ denotes the indicator of the set $A$.
\end{lem}
\subsection{Conformal invariance of the Green function}\label{cinvariance}
We define the following function on the domain $\U:$ 
\begin{equation}\label{laplace}
\tilde f:=\frac{16}{\rm{ area}(\U_{\rm{B}})}\bigl(\mathbf{1}(\U_{\rm{R}})-\mathbf{1}(\U_{\rm{B}})\bigr),
\end{equation}
where $\U_{\rm{B}}$ and $\U_{\rm{R}}$ are defined in Section \ref{para}.
Again $16$ is a constant that appears for the same reason that $64$ appears in \eqref{area12} and is not important.
Recall the functions $\phi$ and $\psi$ from \eqref{confmap1}.
Then, 
\begin{equation}\label{mappedsource}
\tilde f\circ\phi=\frac{16}{\rm{ area}(\U_{\rm{B}})}\bigl(\mathbf{1}(\psi(\U_{\rm{R}}))-\mathbf{1}(\psi(\U_{\rm{B}}))\bigr).
\end{equation}
For any $\U$, define the function $G_{*}:\overline \U \rightarrow \mathbb{R}$ such that for all $z \in \overline \U,$ if $y\in \overline \DD$ is such that $\phi(y)=z$, then, 
\begin{equation}\label{transformed}
G_{*}(z)=\frac{1}{\pi}\int_{|\zeta|<1}\tilde f\circ \phi(\zeta)|\phi'(\zeta)|^2\log(|(\zeta-y)(1-\bar{\zeta}y)|^2)d\xi d\eta,
\end{equation}
where $\zeta=\xi+i\eta.$ The above expression is well defined on $\overline{\U}$ since by Section \ref{commen1} i., the maps $\phi,\psi$ can be extended to neighborhoods containing $\overline \DD, \overline \U$ respectively.
Notice the dependence of $G_*$ on $\dd$ through $\tilde f$. However, for brevity, we choose to suppress the dependence on $\dd$ in the notation. 
The function $G_*,$ is obtained by integrating the Laplacian obtained in Lemma \ref{laplacian98} against the Green kernel with Neumann boundary conditions which is conformally invariant. This is discussed in \cite{tc}.  Lemma \ref{unifcon} shows that as $\dd$ goes to $0$, the function approaches the function $\log\left|\frac{\psi(z)-i}{\psi(z)+i}\right|,$ up to a multiplicative constant.
and hence is clearly conformally invariant. This establishes that the function $G_*(\cdot)$ is ``asymptotically" conformally invariant.
Even though the function $G_n$ in \eqref{dgf} is defined on the graph $\U_n,$  we use  interpolation  to think of it as a function on $\overline \U.$
We  now  state one of the  key convergence results which establishes asymptotic conformal invariance of $G_{n}(\cdot).$ 
We start with the interpolation scheme. 
For all $x\in \frac{1}{n}\mathbb{Z}^2\setminus \U_n,$  define $G_n(x)=0.$
Now, having defined $G_n(x)$ for all $x,\in \frac{1}{n}\mathbb{Z}^2,$ we define it on $ \mathbb{C},$ and hence by restriction, also  on $\overline \U.$ 
This is done by interpolating $G_n(x)$ by a sequence of harmonic extensions along simplices:
\begin{enumerate}[i.]
\item First extend it along the edges using the value on the vertices in $\frac{1}{n}\mathbb{Z}^2$.
\item Then extend it to the squares using the value on the edges.
\end{enumerate}
Thus the function is extended to the entire complex plane $\mathbb{C}$. By abuse of notation in the following result we call the extended function $G_n(\cdot)$ as well. For the next result recall the blob radius $\delta$ from Section \ref{para}.
\begin{thm}\cite[Theorem 3.1]{tc}\label{convergence} For all small enough $\dd$ depending on $\U,$
$$\lim_{{m\rightarrow \infty}\atop{n=2^m}}\sup_{z\in \overline{\U}}|G_n(z)-G_{*}(z)|=0.$$
\end{thm}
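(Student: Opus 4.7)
The plan is to recognize $G_*$ as the expected-occupation-time Green function for reflecting Brownian motion (RBM) on $\overline\U$ against the signed source $\mu_1-\mu_2$, and then deduce uniform convergence of $G_n$ to $G_*$ from a local CLT for the random walk on $\U_n$. Let $p_t^\U(z,w)$ denote the RBM transition density with respect to area. First I would show that
$$\tilde G(z) := \int_0^\infty\Bigl[\frac{1}{\rm{area}(\U_1)}\int_{\U_1} p_t^\U(z,w)\,dw \; - \; \frac{1}{\rm{area}(\U_2)}\int_{\U_2} p_t^\U(z,w)\,dw\Bigr]\,dt$$
is well defined (the integrand decays exponentially since $p_t^\U\to 1/\rm{area}(\U)$ and the source has zero net mass) and equals $G_* + c'$ for some constant $c'$. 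This identification uses conformal invariance of Brownian motion: pulling back by $\phi:\DD\to\U$ turns RBM on $\U$ into a time-changed RBM on $\DD$; its occupation-time Green function (up to the one-dimensional kernel of the Neumann Laplacian) is exactly $-\frac{1}{2\pi}\log|(\zeta-y)(1-\bar\zeta y)|$, and the factor $|\phi'|^2$ in \eqref{transformed} is the Jacobian appearing when the source terms on $\U_1,\U_2$ are transferred to the disc. The centering constant $c(\dd)$ in \eqref{dgf1} is then chosen to absorb $c'$ and the remaining normalization.

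For the discrete side I would split the time integral in the definition of $G_n(z)+c$ as $\int_0^\epsilon+\int_\epsilon^T+\int_T^\infty$. By Lemma \ref{welldefined} the tail contributes at most $O(e^{-DT})$ uniformly in $z$ and $n$, and the short piece contributes $O(\epsilon)$ since the integrand is pointwise bounded by $2$. On $[\epsilon,T]$ I would invoke the local CLT of \cite{wtf,thes}: uniformly for $t\in[\epsilon,T]$ and $x,y\in\U_n$,
$$n^2\,\mathbb{P}_x(X(t)=y) = p_t^\U(x,y) + o(1).$$
Summing over $y\in\U_{i,n}$ and using $|\U_{i,n}|/n^2\to\rm{area}(\U_i)$ converts the discrete prefactor $\frac{n^2}{|\U_{i,n}|}\mathbb{P}_x(X(t)\in\U_{i,n})$ into its continuous counterpart. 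Letting $n\to\infty$, then $\epsilon\to0$ and $T\to\infty$, yields pointwise convergence $G_n(z)\to G_*(z)$ on lattice points.

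To upgrade to uniform convergence on $\overline\U$ and accommodate the harmonic interpolation, I would use equicontinuity. By \eqref{laplacian} the discrete Laplacian of $G_n$ is supported on $\U_{1,n}\cup\U_{2,n}$, so $G_n$ is discrete harmonic elsewhere; discrete Harnack/elliptic regularity for random walk on $\U_n$, including up to the reflecting boundary as in \cite{BC}, yields a modulus of continuity uniform in $n$, and the harmonic interpolation along edges and squares preserves it up to a constant. Since $G_*$ is continuous on $\overline\U$ (using the bi-Lipschitz extension of $\phi$ across $\partial\U$, cf.\ Section \ref{commen1}\,i), pointwise convergence on a dense subset together with equicontinuity gives uniform convergence on $\overline\U$.

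The main obstacle is the boundary: the local CLT must hold uniformly up to $\partial\U$, where the deleted lattice edges have to implement reflection in the limit. This is precisely where the dyadic restriction $n=2^m$ and the analyticity of $\partial\U$ are used, since \cite{BC,wtf} establish convergence of the discrete heat kernel to the RBM heat kernel in exactly this dyadic, smooth-boundary setting. Packaging those pointwise estimates into a bound uniform in both space and time, strong enough to survive integration over $[0,\infty)$ and the logarithmic blow-up near the blobs (reflected in Lemma \ref{maxbnd1}), is the technical heart of the argument carried out in detail in \cite{tc}.
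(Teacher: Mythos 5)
Your proposal reproduces the two-step structure that the paper outlines in Section \ref{conformcon}: first identify $G_*$ with the RBM occupation-time Green function $G$ of \eqref{discdef}, then deduce $G_n\to G$ uniformly from the local CLT of \cite{wtf} combined with the exponential tail bound (Lemma \ref{welldefined}) and the harmonic-interpolation scheme. The only difference is cosmetic: where the paper establishes $G=G_*$ by casting $G$ as the unique solution of a Neumann boundary-value problem and invoking the representation formula of Lemma \ref{pde2}, you derive the same identity probabilistically via conformal invariance of reflected Brownian motion (pulling RBM on $\U$ back to a time-changed RBM on $\DD$, with $|\phi'|^2$ appearing as the Jacobian of the source); these are two presentations of the same fact, so this is essentially the paper's approach.
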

We next make a few observations about the function $G_{*}(\cdot),$ and hence by the aforementioned theorem, about the asymptotics of the function $G_n.$ 
The following is a uniform convergence result of $G_*,$ away from the points $x_{\rm{B}},x_{\rm{R}}$.
\begin{lem}\label{unifcon} Let $\U$ be as in Section \ref{para}. Given $a< 1$,  
$$\lim_{\dd \rightarrow 0}\sup_{{z\in \overline \U:\atop { d(z,x_{\rm{B}})\ge \dd^a}}\atop{d(z,x_{\rm{R}})\ge \dd^a}}\left|G_*(z)-\frac{64}{\pi}\log \left|\frac{\psi(z)-i}{\psi(z)+i}\right|\right|=0$$  
\end{lem}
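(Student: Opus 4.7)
The plan is to change variables in the integral defining $G_*$, reduce each averaged integral over the small blob $\U_i$ to a point evaluation at its center $y_i$, and then pass $y_i \to x_i$ to obtain the closed-form limit. First I would substitute $w = \phi(\zeta)$ in \eqref{transformed} (using $|\phi'(\zeta)|^2\,dA_\zeta = dA_w$) to rewrite
\[
G_*(z) = \frac{16}{\pi\,\mathrm{area}(\U_1)}\Bigl[\int_{\U_2} K(w,z)\,dA_w - \int_{\U_1} K(w,z)\,dA_w\Bigr],
\]
with kernel $K(w,z) = \log|(\psi(w)-\psi(z))(1-\overline{\psi(w)}\psi(z))|^2$, which is essentially twice the Neumann Green function on $\DD$ pulled back by $\psi$.

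Next I would uniformly control $K(\cdot,z)$ on $\U_i = B(y_i,\dd/4)$ when $d(z,x_i)\ge \dd^a$. Since $\psi$ extends bi-Lipschitz across $\partial\U$ by Schwarz reflection (Section~\ref{commen1}\,i), the two factors of $K$ are both bounded below by $c\dd^a$: for $w\in \U_i$ one has $|w - z| \ge \dd^a - 5\dd/4 \ge \dd^a/2$, so $|\psi(w)-\psi(z)| \ge c\dd^a$; and the algebraic identity $1 + i\zeta_0 = i(\zeta_0 - i)$ (with the analog for $-i$) reduces the reflected factor to $|1-\overline{\psi(w)}\psi(z)| \ge |\psi(z)\mp i| - O(\dd) \ge c\dd^a$, using $\psi(x_i) = \pm i$ together with the bi-Lipschitz property. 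These bounds give $|K(w,z)| = O(|\log\dd|)$ and $|\nabla_w K(w,z)| = O(\dd^{-a})$ uniformly over the allowed $(w,z)$.

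With those estimates, since $\mathrm{diam}(\U_i) = \dd/2$, a first-order Taylor expansion in $w$ about $y_i$ gives
\[
\frac{1}{\mathrm{area}(\U_i)}\int_{\U_i} K(w,z)\,dA_w = K(y_i,z) + O(\dd^{1-a}),
\]
and $\dd^{1-a}\to 0$ as $\dd\to 0$ precisely because $a<1$. Combined with $K(y_i,z) = K(x_i,z) + o(1)$ from continuity of $\psi$ away from the relevant singularities, and the equality $\mathrm{area}(\U_1) = \mathrm{area}(\U_2)$, this yields
\[
G_*(z) = \tfrac{16}{\pi}\bigl[K(x_2,z) - K(x_1,z)\bigr] + o(1)
\]
uniformly over $z$ at distance $\ge \dd^a$ from $x_1,x_2$.

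To finish I would compute the limit explicitly. Using $\psi(x_1) = -i$, $\psi(x_2) = i$ and the elementary identities $|1+i\psi(z)| = |\psi(z)-i|$ and $|1-i\psi(z)| = |\psi(z)+i|$, both factors of $K(x_i,z)$ collapse to a single modulus, giving $K(x_2,z) = 4\log|\psi(z)-i|$ and $K(x_1,z) = 4\log|\psi(z)+i|$; multiplying the difference by $16/\pi$ produces the desired $(64/\pi)\log|(\psi(z)-i)/(\psi(z)+i)|$. The main obstacle is the uniform lower bound on the reflected factor $|1-\overline{\psi(w)}\psi(z)|$ when $z$ is close to $\partial\U$ but away from $x_1,x_2$; this is exactly where the hypothesis $a<1$ is used, so that the $\dd^a$ separation dominates the unavoidable $O(\dd)$ error coming from $y_i \ne x_i$.
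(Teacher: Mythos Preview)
Your proposal is correct and follows essentially the same route as the paper's proof: both replace the integral over each small blob by a point evaluation, exploiting that the blob has diameter $O(\dd)$ while the distance to $z$ is at least $\dd^a$, and then identify the resulting point values using the identities $|1\pm i\psi(z)|=|\psi(z)\mp i|$. The only packaging difference is that the paper stays in disc coordinates and keeps the $O(\dd)$ error inside the logarithm (writing $G_*\circ\phi(z)=\frac{64}{\pi}\log\frac{|z-i|+O(\dd)}{|z+i|+O(\dd)}$), whereas you change variables to $\U$ and extract an $O(\dd^{1-a})$ error via an explicit gradient bound and Taylor expansion; both lead to the same uniform $o(1)$ control.
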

Lemma \ref{maxbnd1} is now a corollary of the above results and the following: 
\begin{lem}\label{step1} There exists a constant $C>0,$ such that for any $0<a<1,$ and for all small enough $\dd$, for all $z\in \overline \U$ such that $d(z,x_{\rm{B}})\le \dd^{a},$ $$\frac{a}{C}|\log(\dd)| \le G_*(z) \le C |\log(\dd)|$$ and similarly if  $d(z,x_{\rm{R}})\le \dd^{a}$, $$-C|\log(\dd)|\le  G_*(z) \le -\frac{a}{C}|\log(\dd)|.$$ 
\end{lem}
The technical proofs of 
 Lemmas  \ref{unifcon} and \ref{step1} are provided in Appendix \ref{pol}.

\section{Roadmap for the proof of Theorem \ref{negdrift123}}\label{roadmap} The next few sections will be devoted to the proof of Theorem \ref{negdrift123}.
For the reader's convenience, we provide a detailed roadmap as to what each section does. 
The proof of Theorem \ref{negdrift123} is quite long and involved and the argument has to be divided into cases depending on $\sigma$. 
Before presenting formal arguments, we roughly describe the different cases:
\begin{enumerate}
\item The blue connected components intersecting $\U_{{\rm{B}},n}$  or the red connected components intersecting $\U_{{\rm{R}},n}$ have small diameter.
\item There is a red path connecting $\U_{{\rm{R}},n}$ to a point close to $\U_{{\rm{B}},n},$ or there is a blue path connecting $\U_{{\rm{B}},n}$ to a point close to $\U_{{\rm{R}},n}.$  
\item When none of the above cases hold. 
\end{enumerate}
The following lists what each of the subsequent sections achieve:
\begin{enumerate}[(a)]
\item In Section \ref{efn} we prove Theorem \ref{negdrift123} under the first two cases.
\item The proof of the last remaining case appears in Section \ref{build100}. However, it requires quite a bit of preparation.  
As has already been mentioned, the proof is based on electrical network theory of finite graphs, and in particular, the change in one step of the weight function can be realized as the potential drop when certain vertices in the network are glued or identified. The proof is now completed by a quantitative version of Rayleigh's monotonicity principle. 
\item In Subsection \ref{geomdef}, we define various subsets of $\U_n$ and their boundaries, which would be used in defining the glued sets, etc. 
\item In Subsection \ref{pota100}, we recall certain results from the theory of electrical networks, that our arguments will rely on. 
\item In Subsection \ref{stopdef1002}, we define certain variants for the Green function in \ref{dgf}, where the random walk $X(t)$  is replaced by a random walk killed on hitting certain sets. 
Using these, we state and prove the  key Lemma \ref{energyint},
 which realizes the change in the weight function as difference in energies of certain flows on the network $\U_n.$
\item We realize the RHS of the expression in  Lemma \ref{energyint}, as a potential drop when certain vertices of the underlying network $\U_n$ are glued. We then rely on Rayleigh's monotonicity principle, to prove a weaker version of Theorem \ref{negdrift123}, by proving that the drift is almost non-negative (Lemma \ref{weak201}) in Subsection \ref{weak102}.
\item Finally, we prove a stronger quantitative version (Lemma \ref{resquant1}) in Subsection \ref{quant102}. 
\item
Using all the above ingredients, the proof of Theorem \ref{negdrift123} follows quickly.
\end{enumerate}
\section{Proof of Theorem \ref{negdrift123} under the first two 
cases}\label{efn}
We start with some definitions. For brevity, we will suppress the $\sigma$ dependence in the notations we define subsequently, whenever there is no scope for confusion.
\begin{definition}\label{areadef2} Given $\sigma \in \Omega\cup \Omega'$ (see \eqref{statespace} for definition), 
 let $R_{\rm{B}} = R_{\rm{B}}(\sigma)$ (resp. $R_{\rm{R}}$) be the set of all points in $\U_n$ of color blue (resp. red) reachable by a monochromatic path of blue (resp. red) vertices  from a point in  $\U_{{\rm{B}},n}$ (resp. $\U_{{\rm{R}},n}$). Note that we do not  allow the endpoint to be of the opposite color and that $R_{\rm{B}}$ and $R_{\rm{R}}$ are disjoint. 
\end{definition}
\begin{figure}[hbt]
\centering
\includegraphics[scale=.25]{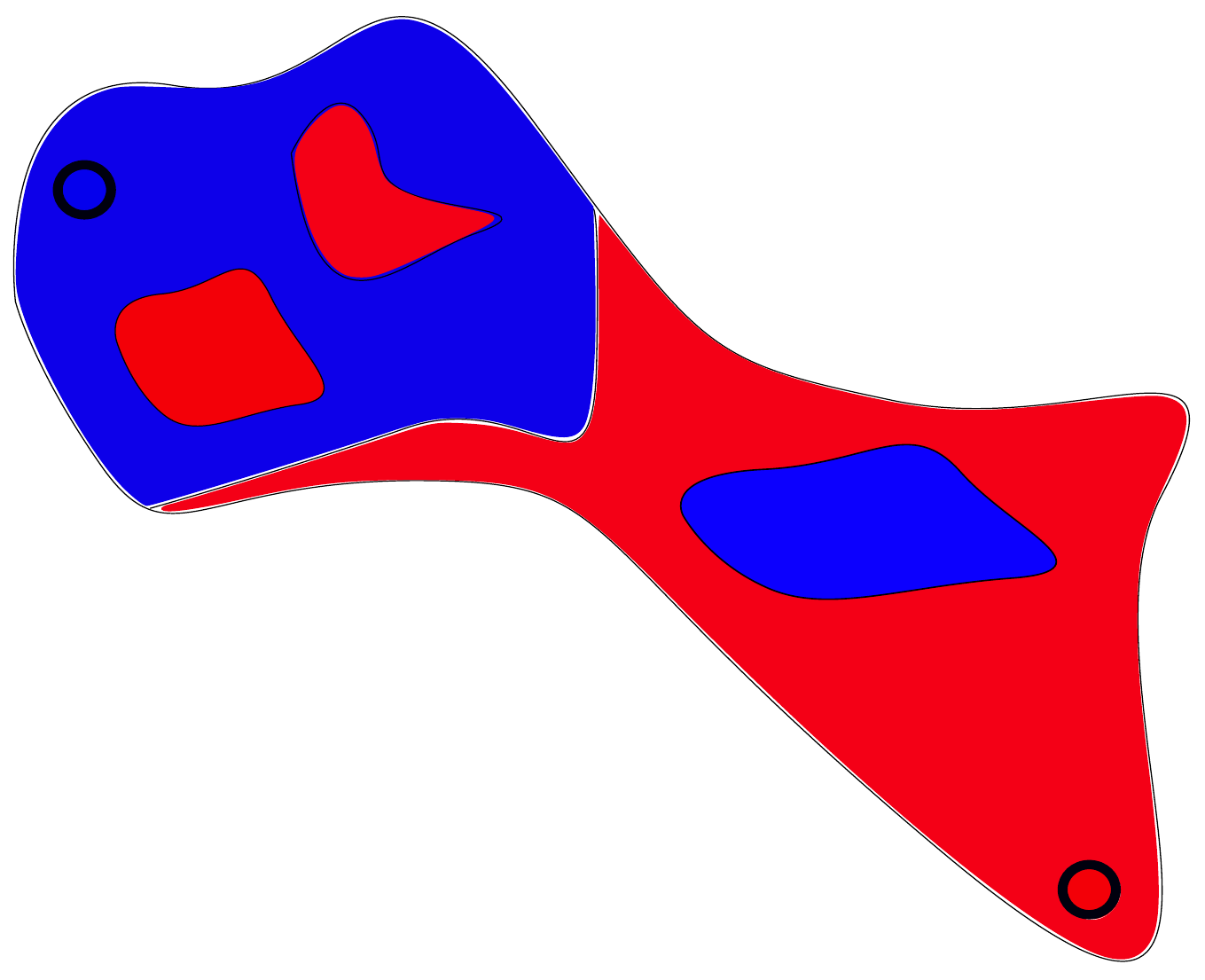}
\label{f.region}
\caption{The blue connected region containing the blue blob is  $R_{\rm{B}}$ and similarly  $R_{\rm{R}}$. Note that the two red islands and the blue island are not included in either set. It will be useful to keep in mind that  if a part of the blue blob is colored red, it is not included in the set $R_{\rm{B}}$.}
\end{figure}
For ${\rm{C}}\in \{{\rm{B}},\rm{R}\},$ and $\sigma \in \Omega \cup \Omega',$ let,
\begin{equation}\label{stoptime}
\sT_{({\rm{C}},\sigma)} :=\tau (\U_n \setminus R_{\rm{C}}(\sigma)),
\end{equation}
be the hitting time of $\U_n \setminus R_{\rm{C}}(\sigma),$ for the continuous time random walk on $\U_n$.
Let $X^{({\rm{B}})}(t)$ and $X^{({\rm{R}})}(t)$ be continuous time random walks on $\U_n$ started uniformly over $\U_{{\rm{B}},n}$ and $\U_{{\rm{R}},n}$ respectively. Thus  by definition, 
\begin{equation}\label{alternate}
\E_{\sigma}({\rm{W}}(\varsigma_{1})-{\rm{W}}(\varsigma_{0}))=\E[G_n(X^{({\rm{B}})}(\sT_{({\rm{B}},\varsigma_0)}))-G_n(X^{({\rm{R}})}(\sT_{({\rm{R}},\varsigma_{1/2})}))].
\end{equation}
For the two terms on the right hand side, the expectation in the first term is over the measure induced by the blue random walk started according to $\mu_{\rm{B}}$ (see Section \ref{para}.) Note that the expectation in the second term is over the measure induced by the red random walk started according to $\mu_{\rm{R}},$ as well as the random intermediate configuration $\varsigma_{1/2},$ (see \eqref{fullpro1}). 

We now state the first theorem towards the proof of Theorem \ref{negdrift123} which covers the first two cases mentioned in Section \ref{roadmap}.
 \begin{thm}\label{casei} Let $\U,y_{\rm{B}},y_{\rm{R}}, \dd$ be as in Section \ref{para}. There exists $D>0,$ and  
$ 0<a_2 < a_1<1,$ and $\dd_0>0,$ such that for $\dd \le \dd_0,$ and all $n=2^{m}>N(\dd),$   if $\sigma\in \Omega$ is such that, either, 
\begin{enumerate}
\item [i.]  $R_{\rm{B}}$ does not intersect $\U_n\setminus B_n(y_{\rm{B}},\dd^{a_2})$ or,
\item [ii.] $R_{\rm{B}}$  intersects  $\U_n\setminus B_n(y_{\rm{B}},\dd^{a_2}) $  and $R_{\rm{R}}$ intersects $B_n(y_{\rm{B}},\dd^{a_1}) $ 

or  switching the roles of $R_{\rm{B}}$ and $R_{\rm{R}}$  in the above two cases 

\item [iii.] $R_{\rm{R}}$ does not intersect $\U_n \setminus B_n(y_{\rm{R}},\dd^{a_2})$ or,
\item [iv.] $R_{\rm{R}}$  intersects  $\U_n \setminus B_n(y_{\rm{R}},\dd^{a_2}) $  and $R_{\rm{B}}$ intersects $B_n(y_{\rm{R}},\dd^{a_1}),$ 
\end{enumerate} 
then, 
\begin{equation}\label{subm1}
D^{-1}|\log(\dd)| \le \E_{\sigma}({\rm{W}}(\varsigma_1)-{\rm{W}}(\varsigma_0)) \le D |\log(\dd)|.
\end{equation} 
\end{thm}
The reason for the above is roughly the following: (we only discuss the first two cases.) 
\begin{itemize}
\item In case $i.,$ since the diameter of $R_{\rm{B}}$ is small, the blue random walk stops before exiting a small ball. Thus the first term on the RHS in \eqref{alternate} is roughly $|\log(\dd)|$ by Lemma \ref{step1}. Now, by standard random walk estimates one can show that the red walk is likely to exit $R_{\rm{R}}$ before coming close to $x_{\rm{B}}.$ Thus the second term is much smaller and hence the result follows. Figure  \ref{f.prooffig} provides an illustration.
\item In case $ii.,$ by hypothesis, there is a red path which connects $\U_{{\rm{R}},n}$ to a small ball around $y_{\rm{B}}$ of radius $\dd^{a}$ for some $a$. Then it becomes likely, that the blue random walk starting from $\U_{{\rm{B}},n},$ will hit this path and hence exit $R_{\rm{B}}$ before going too far from $x_{\rm{B}},$ which again makes the first term in the RHS in \eqref{alternate}, roughly $|\log(\dd)|$.  The rest of the arguments are then the same as in the previous case which shows that the second term in \eqref{alternate} is much smaller. See Figure  \ref{f.prooffig1} for an illustration. 
\end{itemize}

To carry out the above steps one needs estimates about the random walk on $\U_n$ which were obtained by the authors in \cite{tc1}. We now quote two results from there, we will need. The first one  says the following: uniformly for any point $z$ at distance $1/2$ (any fixed constant would work) from $y_{\rm{B}},$ and all configurations $\sigma \in \Omega \cup \Omega',$ the chance that the random walk starting from $z,$ does not hit $\rm{S}_{\rm{B}},$ before being at distance $\e$ from $y_{\rm{B}},$ goes to $0$ as $\e$ goes to $0$.  
\begin{figure}[hbt]
\centering
\includegraphics[scale=.5]{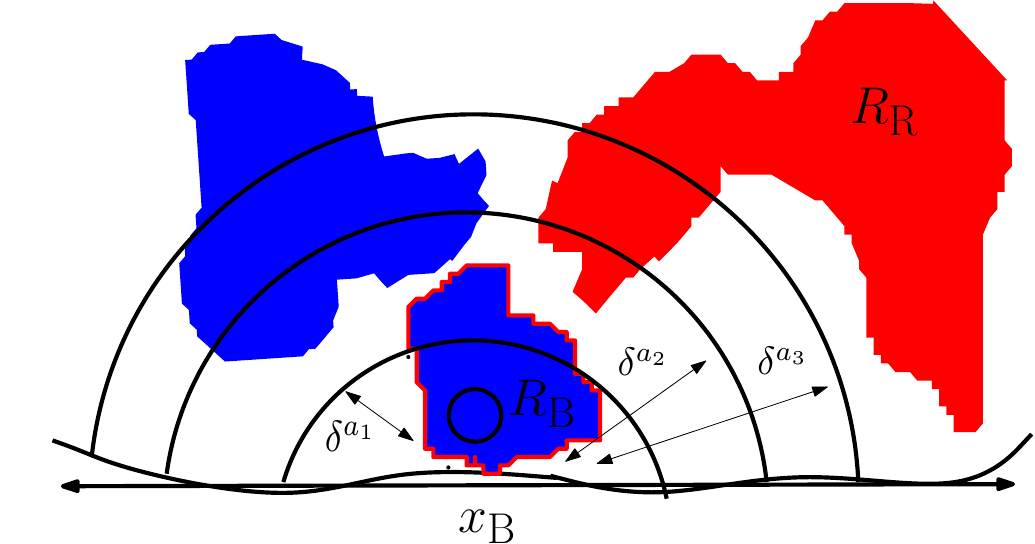}
\caption{Illustrating the case when the diameter of $R_{\rm{B}}$ is less than $\dd^{a_2}$ and hence the blue walker stops within $B_n(y_{\rm{B}},\dd^{a_2})$.}   
\label{f.prooffig}
\end{figure}
\begin{lem}\cite[Lemma 5.2]{tc1}\label{polydecay} For all $\U,y_{\rm{B}},$ as in Section \ref{para},  
$$\lim_{\e \to 0}\limsup_{n\rightarrow \infty}\sup_{\sigma \in \Omega \cup \Omega'}\left[\sup_{z\in \U_n \setminus B_n(y_{\rm{B}},\frac{1}{2})}\mathbb{P}_z\bigl (\tau(B_n(y_{\rm{B}},\e))\le \tau({\rm{S}_{\rm{B}}})\bigr)\right]=0,$$
where $\rm{S}_{\rm{B}}=\rm{S}_{\rm{B}}(\sigma)$ was defined in \eqref{region}, $\tau(\cdot)$ denotes the hitting time for the random walk on $\U_n,$ and $\mathbb{P}_{z}(\cdot)$ denotes, the random walk probability measure started from $z \in \U_n.$ 
\end{lem}
The next lemma,   roughly says that if a connected set $A,$ of a large enough diameter, is close to $\U_{{\rm{B}},n},$ then random walk starting from $\U_{{\rm{B}},n},$ is  likely to hit the set $A,$ before exiting a large enough ball.
\begin{lem}\label{capa}\cite[Lemma 5.6]{tc1} Let $0<\e_1< \e_2.$ Assume $A\subset \U_n$ is a connected set such that,  
 $d(\U_{{\mathrm{B}},n},A)\le \e_1.$ and $A \cap \bigl\{\U_n\setminus B_n(y_{\rm{B}},\e_2)\bigr\}\neq \emptyset.$ 
Then, $$\sup_{x\in \U_{{\mathrm{B}},n}}\mathbb{P}_x(\tau(\U_n\setminus B_n(y_{\rm{B}},\e_2))\le \tau(A))\le {C^{\log{\left(\frac{\e_2}{\e_1}\right)}}}, $$
for some $C=C(\U)<1$ independent of $n$.
\end{lem}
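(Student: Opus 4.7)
The plan is a Beurling-type dyadic-shell argument. Since $A$ is connected and reaches both a point within distance $\e_1$ of $\U_{1,n}$ (hence within distance $\e_1+O(\dd)$ of $y_1$) and a point outside $B(y_1,\e_2)$, it must cross every concentric annulus around $y_1$ whose radii lie between these two scales. Set $M := \lfloor \log_2(\e_2/\e_1)\rfloor - O(1)$ and, for $k=1,\dots,M$, let $\Lambda_k := B(y_1, 2^k \e_1) \setminus B(y_1, 2^{k-1}\e_1)$. Connectedness of $A$ provides for each $k$ a connected subset $A^{\ast}_k \subset A \cap \Lambda_k$ that joins the inner circle of $\Lambda_k$ to its outer circle.

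The one-shell step is to prove that there exists a constant $c=c(\U)<1$ such that, uniformly in $k\le M$ and in any starting point $z$ on the inner boundary of $\Lambda_k$,
\[
\PP_z\bigl(\tau(\partial B(y_1,2^k\e_1)) \le \tau(A^{\ast}_k)\bigr) \le c.
\]
For planar Brownian motion this is the classical Beurling projection estimate: projecting $A^{\ast}_k$ onto a radial slit only decreases the probability of missing it, and a Brownian motion crossing a dyadic annulus with one radial slit removed avoids the slit with probability strictly less than one. The analogous bound for the continuous-time random walk on $\U_n$ follows by the random walk to reflected Brownian motion coupling / local CLT inputs already invoked in the proof of Theorem \ref{convergence} (see \cite{BC,wtf}). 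When $2^k\e_1$ is much smaller than $d(y_1,\partial\U)\ge \dd/2$, the annulus lies inside $\U$ and the estimate is purely planar; when $2^k\e_1$ is comparable to or larger than $\dd$, smoothness of $\U$ turns a neighbourhood of the boundary into (after Schwarz reflection) a half-plane, and the corresponding half-space Beurling estimate yields the same kind of bound with a constant depending only on $\U$.

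With the one-shell bound in hand, the lemma follows by a strong Markov iteration. Let $T_k$ be the first time the walk reaches the inner circle of $\Lambda_k$. On the event $\{\tau(\U_n\setminus B(y_1,\e_2))\le \tau(A)\}$ the walk must, for each $k=1,\dots,M$, cross $\Lambda_k$ from its inner to its outer circle without touching $A^{\ast}_k$. Applying the strong Markov property at $T_1,\dots,T_M$ together with the one-shell estimate gives
\[
\sup_{x\in \U_{1,n}} \PP_x\bigl(\tau(\U_n\setminus B(y_1,\e_2)) \le \tau(A)\bigr) \le c^M \le C^{\log(\e_2/\e_1)}
\]
for an appropriate $C=C(\U)\in(0,1)$, which is the conclusion. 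The fact that the walk starts inside $\U_{1,n}$ rather than exactly on the inner circle of $\Lambda_1$ only costs a harmless multiplicative constant, absorbed into the $O(1)$ in the definition of $M$, since $\U_{1,n}$ is a $\dd/4$-blob around $y_1$.

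The main obstacle is the discrete single-shell Beurling estimate in the presence of the (possibly reflecting) boundary of $\U$. For shells that sit strictly in the interior of $\U$ it is a direct consequence of the Brownian coupling; handling shells that meet $\partial \U$ is what forces us to use smoothness of the domain together with the reflected Brownian motion coupling of \cite{BC,wtf} to transfer the classical continuous Beurling estimate to the random walk on $\U_n$. Once this uniform one-shell bound is established, the geometric extraction of the crossings $A^{\ast}_k$ from connectedness of $A$ and the dyadic iteration are routine.
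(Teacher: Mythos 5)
The paper does not actually prove Lemma~\ref{capa}; it is cited verbatim from the companion paper \cite[Lemma~5.5]{tc}, so there is no in-paper proof to compare against. That said, your dyadic-shell Beurling argument is the standard and almost certainly intended route for an estimate of this form, and the overall structure is sound: extract a crossing piece $A^{\ast}_k$ of $A$ in each dyadic annulus $\Lambda_k$ around $y_1$ (connectedness of $A$ guarantees these exist), establish a uniform one-shell avoidance bound $c<1$, and multiply $M\asymp\log(\e_2/\e_1)$ such bounds via the strong Markov property. The normalization $C=c^{1/\log 2}$ reconciles $c^M$ with $C^{\log(\e_2/\e_1)}$, and starting from $\U_{1,n}$ rather than on the first inner circle costs only an absorbed constant, as you note.

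The one place you should be more careful is the uniform one-shell estimate. First, for annuli $\Lambda_k$ that meet $\partial\U$, the reflected-Brownian-motion Beurling estimate is not literally "purely planar"; the crossing set $A^{\ast}_k$ may be very close to the reflecting boundary, and the classical projection trick does not immediately transfer. This is presumably exactly why \cite{tc} develops the RBM heat-kernel/local-CLT machinery from \cite{BC,wtf}, and you correctly flag it as the main obstacle, but a complete argument would need to spell out the transfer (for instance via the harmonic-measure comparison in the reflected setting, or via a straightening of the boundary afforded by smoothness). Second, the discrete one-shell bound must hold \emph{uniformly over the position of $A^{\ast}_k$} inside $\Lambda_k$ and over $n$ large, not merely for a fixed set; the random-walk-to-RBM coupling delivers this only on annuli whose inner radius $2^{k-1}\e_1$ is at least a few lattice spacings, so the cutoff $M$ and the "large $n$" threshold implicitly depend on $\e_1$ (equivalently on $\dd$ in the application). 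Neither of these is a gap in the strategy, but both are exactly where the content of the cited lemma lives, and the proposal currently delegates them to "routine transfer" rather than carrying them out.
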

We are now ready to prove Theorem \ref{casei}.
\begin{proof}[Proof of Theorem \ref{casei}]
By symmetry it suffices to prove the theorem for the first two cases.

\textit{Case i.} Let $0<a_3<a_2<a_1< 1$ be constants to be specified later and for $i=1,2,3,$ define $\dd_i=\dd^{a_i}$ and $p(\dd_3)$ be the probability that the random walk started uniformly from $\U_{{\rm{R}},n}$ hits $B_n(y_{\rm{B}},\dd_3)$ before hitting $\rm{S}_{\rm{B}}(\sigma)$.   
We claim that, in this case there exists constants $D_1,D_2 >0,$  such that given $a_2,a_3,$ for all small enough $\dd,$ and large enough $n,$ (depending on $\delta$), 
\begin{align}\label{proofexp1}
\mathbb{E}_{\sigma}\bigl({\rm{W}}(\varsigma_{1})-{\rm{W}}(\varsigma_0)\bigr) &\ge  D_1|\log(\dd_2)|-\bigl[p(\dd_3)D_2 |\log(\dd)|+(1-p(\dd_3))D_2|\log(\dd_3)|\bigr],\\
\nonumber
&=D_1a_2|\log(\dd)|-\bigl[p(\dd_3)D_2 |\log(\dd)|+(1-p(\dd_3))D_2a_3|\log(\dd)|\bigr].
\end{align}
To see the above, recall \eqref{alternate},
$$\E_{\sigma}({\rm{W}}(\varsigma_{1})-{\rm{W}}(\varsigma_{0}))=\E[G_n(X^{({\rm{B}})}(\sT_{({\rm{B}},\varsigma_0)}))-G_n(X^{({\rm{R}})}(\sT_{({\rm{R}},\varsigma_{1/2})}))].$$
Now by hypothesis, $R_{\rm{B}}$ does not intersect $\U_n \setminus B_n(y_{\rm{B}},\dd_2)$ (see Figure \ref{f.prooffig}), and hence the blue random walk stops before exiting $B_n(y_{\rm{B}},\dd_2)$. Thus from Theorem \ref{convergence} and Lemma \ref{step1}, it follows that the first term on the RHS is at least $D_1|\log(\dd_2)|$. 
Now, on the event that the red random walk enters the ball $B_n(y_{\rm{B}},\dd_3)$ before hitting $\rm{S}_{\rm{B}}(\varsigma_{1/2}),$ which happens with probability at most $p(\dd_3)$, the second term is at most $O(|\log(\dd)|),$ by  Lemma \ref{maxbnd1}. Otherwise it is at most  $D_2|\log(\dd_3)|$ by   Theorem \ref{convergence} and Lemma \ref{unifcon}.
Since $p(\dd_3)$ goes to $0$ as $\dd$ goes to $0,$ we can suitably choose $a_3< a_2$ to complete the argument. Note that we do not need $a_1$ for this argument. It appears in the analysis of the next case.
 \begin{figure}[hbt]
\centering
\includegraphics[scale=.5]{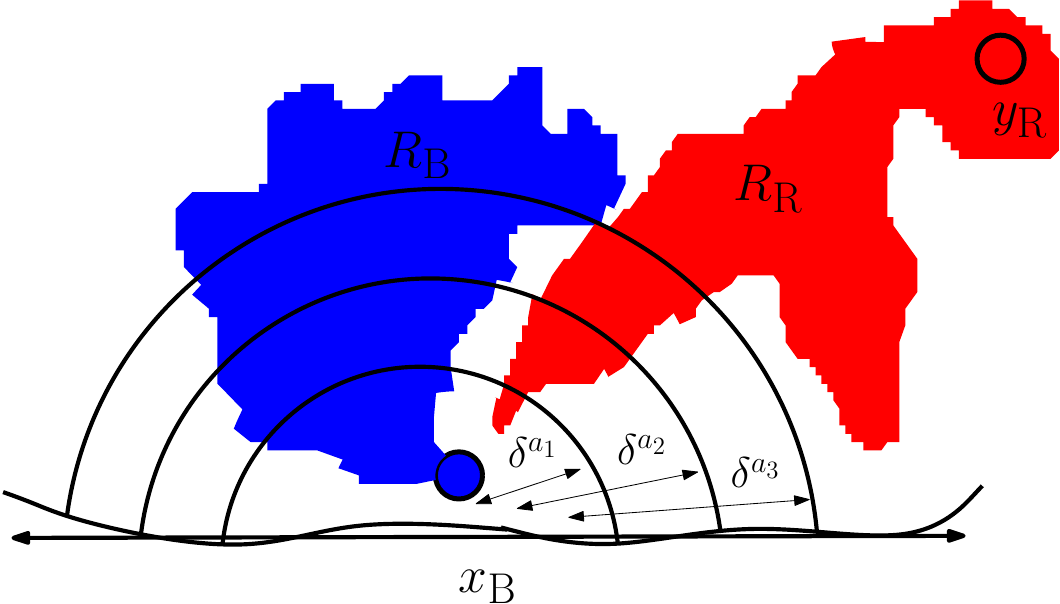}
\caption{Illustrating the case when $R_{\rm{R}}$ connects $B_n(y_{\rm{R}},\dd)$ to a point in the neighborhood of $x_{\rm{B}}$.}
\label{f.prooffig1}
\end{figure}

\textit{Case ii. }  By hypothesis, there exists a path $\sP,$ of red vertices connecting $B_n(y_{\rm{B}},\dd_1)$ and $\U_n \setminus B_n(y_{\rm{B}},\dd_2).$ Thus, it should be likely that the blue random walk started from $\U_{{\rm{B}},n},$ will hit $\sP$ before exiting $B_n(y_{\rm{B}},\dd_2).$ This is precisely the content of Lemma \ref{capa} which implies,
\begin{equation}\label{pathhit}
\inf_{x\in \U_{{\rm{B}},n}}\mathbb{P}_{x}\bigl(\tau(\sP)\le \tau(\U_n\setminus B_n(y_{\rm{B}},\dd_2))\bigr) \ge 1- C^{\log(\frac{\dd_2}{\dd_1})},
\end{equation}  for some $C=C(U)<1.$
Hence, as in the previous case, there exists constants $D_1,D_2 >0,$ such that given $a_1,a_2,a_3,$  for all small enough $\delta$ and  large enough $n$ (depending on $\delta$), 
\begin{align}\label{explain2}
\mathbb{E}_{\sigma}\bigl({\rm{W}}(\varsigma_{1})-{\rm{W}}(\varsigma_0)\bigr) &\ge D_1 |\log(\dd_2)|-  C^{\log(\dd_2/\dd_1)} D_2|\log(\dd)| \\
\nonumber
&-\bigl[p(\dd_3)D_2|\log(\dd)|+(1-p(\dd_3))D_2|\log(\dd_3)|\bigr].
 \end{align}
To see the above, note that $G_{n}(X^{({\rm{B}})}(\sT_{({\rm{B}},\varsigma_0)}))$ is at least $D_1|\log(\dd_2)|,$ if the blue random walk stops before exiting $B_n(y_{\rm{B}},\dd_2),$ which happens with chance at least $1-C^{\log(\dd_2/\dd_1)},$ by \eqref{pathhit}. Otherwise, by  Lemma \ref{maxbnd1},  it is  at least $-D_2|\log(\dd)|.$ 
We bound the term $G_{n}(X^{(\rm{R})}(\sT_{({\rm{R}},\varsigma_{1/2})})),$ exactly as in case $i.$  
The expression in \eqref{explain2} simplified becomes, $$D_1 a_2|\log(\dd)|-  C^{\log(\dd_2/\dd_1)} D_2|\log(\dd)| 
-\bigl[p(\dd_3)D_2|\log(\dd)|+(1-p(\dd_3))D_2a_3|\log(\dd)|\bigr].$$
Thus, one can suitably choose constants $a_3 < a_2 < a_1,$ such that the RHS in both \eqref{proofexp1} and \eqref{explain2} are $C_1|\log(\dd)|,$ for some $C_1>0$ (independent of $\delta,n$) 
and hence, the proof of Theorem \ref{casei} is complete.
\end{proof}
To finish the proof of Theorem \ref{negdrift123}, we have to consider Case (3) mentioned in Section \ref{roadmap}.
\textbf{Thus, throughout the sequel until the completion of the proof of Theorem \ref{negdrift123} we will work under the following assumption:} \begin{assumption}\label{ass2}: $\sigma$ does not fall in any of the four cases in the statement of Theorem \ref{casei}.
That is, both $R_{\rm{B}}$ and $R_{\rm{R}}$ extend beyond $B_n(y_{\rm{B}},\dd_2)$ and $B_n(y_{\rm{R}},\dd_2)$ respectively and also $R_{\rm{R}}$ does not reach $B_n(y_{\rm{B}},\dd_1)$ and similarly $R_{\rm{B}}$ does not reach $B_n(y_{\rm{R}},\dd_1),$ (recall $\delta_1=\delta^{a_1},\delta_2=\delta^{a_2}$ from the statement of Theorem \ref{casei}).
\end{assumption} 

Thus under Assumption \ref{ass2},
\begin{eqnarray}\label{nointer123}
B_n(y_{\rm{B}},\dd_1)\cap R_{\rm{R}}&=&\emptyset,\\
\nonumber
B_n(y_{\rm{R}},\dd_1)\cap R_{\rm{B}}&=&\emptyset. 
\end{eqnarray}
Notice that we have $\U_{{\rm{B}},n}\subset B_n(y_{\rm{B}},\dd_1),$ (similarly $\U_{{\rm{R}},n}\subset B_n(y_{{\rm{R}}},\dd_1)$) for all small enough $\dd$, since $\dd_1=\dd^{a}$ for some $a<1.$ 
\section{Completing Proof of Theorem \ref{negdrift123}}\label{build100}
The proof of Theorem \ref{negdrift123} under the above assumption,  requires quite a bit of preparation. An elaborate description of what various subsequent subsections achieve, was provided in Section \ref{roadmap}.
\subsection{Some geometric definitions}
\label{geomdef}
For technical reasons sometimes it will be convenient to work with a slightly larger set instead of $R_{\rm{B}}.$
To define it, first observe that for any small enough $\dd>0$, for all $n\ge N(\dd),$ there exists a set $A_{{\rm{B}},n}\subset \U_n$
such that $A_{{\rm{B}},n}$ is connected, and,
$$ B_n(y_{\rm{B}},\frac{\dd_1}{4})\cap \U_n \subseteq A_{{\rm{B}},n}\subset B_n(y_{\rm{B}},\dd_1)\cap \U_n.$$ Similarly, one  defines, $A_{{\rm{R}},n}$ by replacing $y_{\rm{B}}$ by $y_{\rm{R}}.$ 
These facts follow since,  locally near the boundary $\U$ looks like a half plane, see \eqref{localhalf}. We omit the details.
The exact form of the set $A_{{\rm{B}},n}$ will not be important for us as long as the above properties are satisfied and we could as well work with $B_n(y_{\rm{B}},\frac{\dd_1}{4})\cap \U_n$ if the latter is connected.

Since $\U_{{\rm{B}},n}\subset B_n(y_{\rm{B}},\frac{\dd_1}{4})\cap \U_n \subseteq A_{{\rm{B}},n},$ (and similarly $\U_{{\rm{R}},n}\subset B_n(y_{{\rm{R}}},\frac{\dd_1}{4})\cap \U_n \subseteq A_{{\rm{R}},n}$),
it immediately follows from definition, that $R_{\rm{B}}\cup A_{{\rm{B}},n}$ and $R_{\rm{R}}\cup A_{{\rm{R}},n}$ are two disjoint, connected, subsets of $\U_n.$
 Let $\U_n\setminus \{R_{\rm{B}}\cup A_{{\rm{B}},n}\}= \bigcup_{i=1}^{m} C_{{\rm{B}},i},$ where $C_{{\rm{B}},1}, \ldots C_{{\rm{B}},m},$ are the connected components of $\U_n\setminus \{R_{\rm{B}}\cup A_{{\rm{B}},n}\}$. 
 Let $\{R_{\rm{R}}\cup A_{{\rm{R}},n}\}\subseteq C_{{\rm{B}},m}.$  
 
We will often work with the following `simply connected' version of ${R}_{\rm{B}}:$ 
\begin{equation}\label{fillholes}
\mathring{R}_{\rm{B}}= R_{\rm{B}}\cup A_{{\rm{B}},n}\bigcup_{i\neq m} C_{{\rm{B}},i}.
\end{equation}
Informally, $\mathring{R}_{\rm{B}}$ is the set $R_{\rm{B}}\cup A_{{\rm{B}},n}$ along with all the  red islands filled in (see Figure  \ref{f.regdef1}).
Similarly define $\mathring{R}_{\rm{R}}.$ Observe that  
$\mathring{R}_{\rm{C}}$ is connected for ${\rm{C}}\in \{{\rm{B}},\rm{R}\}$. Note that $\mathring{R}_{\rm{C}}$ depends on $\sigma$ which we choose to suppress for the ease of notation. 
\begin{figure}[hbt]
\center
\includegraphics[scale=.6]{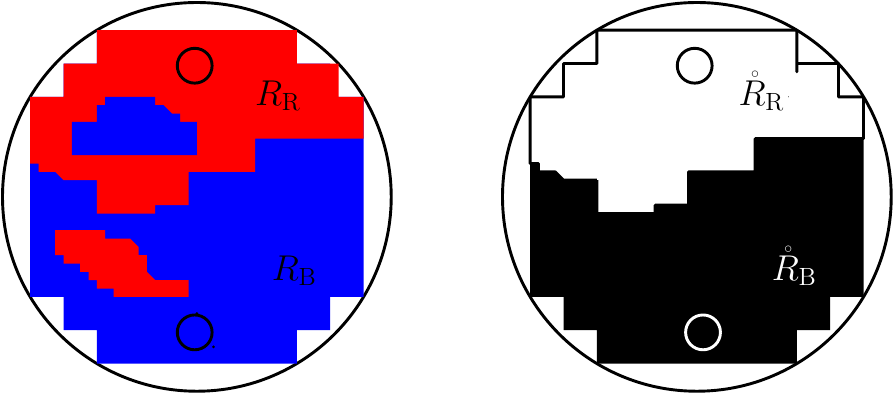}
\caption{Illustrating the difference between the sets ${R}_{\rm{C}}$ and $\mathring{R}_{\rm{C}}$ for ${\rm{C}}\in \{{\rm{B}},\rm{R}\}$.  Note that in the latter, the islands are filled in.}
\label{f.regdef1}
\end{figure}
Next, given a vertex subset $A\subset \U_n,$ we define two types of boundaries (outer and inner) of $A,$

\begin{eqnarray}\label{bdry1}
\partial_{out}A &:=&\{y \in \U_n\setminus A:\,\,\exists\,\, x\in A\,\,\text{such that }  x\sim y\}\\
\nonumber
\partial_{in}A & := &\{x \in A:\,\,\exists\,\, y\in \U_n \setminus A\,\,\text{such that }  x\sim y\}.
\end{eqnarray}

\begin{definition}\label{conndef1}
Let $\U_n^*$ denote the graph $\U_n$ along with all the diagonals of the squares all of whose four sides are in $\U_n.$ We will call connected subsets of $\U_n^*$ as $*-$ connected subsets of $\U_n.$
\end{definition}
\begin{remark}\label{connrem}

Since both $\mathring{R}_{\rm{B}}$ and $\U_n \setminus \mathring{R}_{\rm{B}}$ are connected by definition, it follows by \cite[Lemma 2]{at1}, that both $\partial_{out}\mathring{R}_{\rm{B}}$ and $\partial_{in}\mathring{R}_{\rm{B}}$ are $*-$connected. 
\end{remark}

\subsection{Energy and flows on finite graphs and some potential theory}\label{pota100} In this section we recall some well known results and notation from potential theory that we will need. 
For a graph $G=(V,E),$ with vertex set $V$ and edge set $E,$ as before, let 
$w \sim v$ denote that $w$ is a neighbor of $v$.
Also, let $\vec{E}$ be the set of directed edges, where each edge in $E$ corresponds
to two directed edges in $\vec{E}$, one in each direction
(except for self-loops, which correspond to just one directed edge in $\vec{E}$).
A flow is an antisymmetric  function $f: \vec{E} \rightarrow \mathbb{R}$
(i.e.,\ a function satisfying $f(w,v)=-f(v,w)$). Note that by definition the value of a flow on a self loop is $0.$
Define the energy of a flow $f$ by,
\begin{equation}\label{enerno}
\mathcal{E}(f)=\frac12 \sum_{(v,w)\in \vec{E}}f(v,w)^2.
\end{equation} 
For any flow $f: \vec{E} \rightarrow \mathbb{R},$
 define the divergence, $\div f: V \rightarrow \mathbb{R}$
by, 
\begin{equation}\label{divdef}
\div f (v) = \sum_{w\sim v} f(w,v).
\end{equation}
Note that for any flow, 
\begin{equation}\label{divsum}
\sum_{x\in V}\div f=\sum_{{x,y \in V}\atop {y \sim x}}f(x,y)+f(y,x)=0,
\end{equation}
 since $f$ is antisymmetric.
For disjoint subsets $A,B \subset V$ and a flow $f,$ we say that the flow is from $A$ to $B,$ if $\div f(z)=0$ for all vertices $z\notin A\cup B,$ while $\sum_{x\in A}\div f \ge 0,$ and $\sum_{x\in B}\div f\le 0.$ For more about flows, see \cite[Chapter 9]{lpw}.
For any function $F: V \rightarrow \mathbb{R},$
define the gradient $\nabla F : \vec{E} \rightarrow \mathbb{R}$ by,
$$\nabla F (v,w) = F(w)-F(v).$$
Recall the definition of Laplacian from \eqref{lapnot1}. Thus clearly for any $F: V\rightarrow \mathbb{R},$
\begin{equation}\label{relation12}
\div (\nabla F)(v)=d_v. (\Delta F) (v),
\end{equation} 
where $d_v$ is the degree of the vertex $v$.
The next result is a standard summation by parts formula whose proof we omit.
\begin{lem}\label{byparts}For any function $F: V\rightarrow \mathbb{R},$
$$\mathcal{E}(\nabla F)=\sum_{v\in V}F(v)d_v. \Delta F(v).$$
\end{lem}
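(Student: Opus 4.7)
The plan is to verify the identity by direct algebraic manipulation, exploiting the antisymmetry of $\nabla F$ together with the definitions of $\mathcal{E}$, $\div$, and $\Delta$ given in \eqref{enerno}, \eqref{divdef}, and \eqref{lapnot1}. No combinatorial or analytic obstacle is expected here; the lemma is a discrete analog of the Green--Gauss identity $\int |\nabla F|^2 = -\int F \Delta F$ (with the sign convention of $\Delta$ used in the paper making the final sign positive), and the proof is essentially a two-line calculation.

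Concretely, I would start from
\[
\mathcal{E}(\nabla F) \;=\; \tfrac12 \sum_{(v,w)\in \vec{E}} \nabla F(v,w)\,\bigl(F(w)-F(v)\bigr),
\]
split the right-hand side as the difference of the two sums $\tfrac12 \sum \nabla F(v,w) F(w)$ and $\tfrac12 \sum \nabla F(v,w) F(v)$, and then relabel $(v,w) \leftrightarrow (w,v)$ in the first sum and invoke the antisymmetry $\nabla F(v,w) = -\nabla F(w,v)$ to see that both sums equal $-\tfrac12 \sum_{(v,w)} \nabla F(v,w) F(v)$. This yields
\[
\mathcal{E}(\nabla F) \;=\; -\sum_{v\in V} F(v) \sum_{w\sim v} \nabla F(v,w).
\]

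Finally I would recognize the inner sum: by the definition \eqref{lapnot1} one has $\sum_{w\sim v} \nabla F(v,w) = \sum_{w\sim v} (F(w)-F(v)) = -d_v\,\Delta F(v)$, or equivalently $\sum_{w\sim v} \nabla F(v,w) = -\div(\nabla F)(v)$ combined with \eqref{relation12}. Substituting this into the previous display produces $\mathcal{E}(\nabla F) = \sum_{v} F(v)\, d_v\, \Delta F(v)$, which is the claim. Self-loops contribute nothing to any of these sums since $\nabla F$ vanishes on them, so they can be safely ignored throughout.
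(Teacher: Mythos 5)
Your proof is correct and is exactly the "expanding the terms" argument the paper alludes to but omits: the paper's proof of Lemma \ref{byparts} simply states that it "follows by definition and expanding the terms" with the details omitted, and your two-step calculation (antisymmetrization to get $\mathcal{E}(\nabla F) = -\sum_v F(v)\sum_{w\sim v}\nabla F(v,w)$, then identifying the inner sum as $-d_v\Delta F(v)$) supplies precisely those details. The remark that self-loops drop out because $\nabla F$ vanishes on them is a correct and useful observation given that the paper explicitly allows self-loops in $\vec E$.
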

We now discuss a standard interpretation of the Green function $G_{n}(\cdot)$ defined on $\U_n$ in \eqref{dgf} in terms of energy and flows. 
\begin{lem}\cite[Theorem 9.10, (Thomson's principle)]{lpw}\label{var} For all flows $\theta$ on $\U_n$ such that $\div \theta =\frac{4}{|\U_{{\rm{B}},n}|}\bigl[\mathbf{1}(\U_{{\rm{B}},n})-\mathbf{1}(\U_{{\rm{R}},n})\bigr],$ we have  $\mathcal{E} (\nabla G_n) \leq \mathcal{E}(\theta).$ \end{lem}

First observe that by \eqref{laplacian} and \eqref{relation12} the flow $\nabla G_n$ satisfies the above divergence condition. Note that the $4$ appears since every vertex in $\U_{{\rm{B}},n}$ and $\U_{{\rm{R}},n}$ has degree $4.$
The result now follows by standard arguments which can be found in \cite{lpw}. 

For notational brevity let,
\begin{eqnarray}
\label{not1}
{\rm{Ind}}_{\rm{B}} (\cdot)& := & \frac{1}{|\U_{{\rm{B}},n}|}\mathbf{1}(\U_{{\rm{B}},n})(\cdot),\\
\label{not2}
{\rm{Ind}}_{\rm{R}} (\cdot)& := & \frac{1}{|\U_{{\rm{B}},n}|}\mathbf{1}(\U_{{\rm{R}},n})(\cdot),\\
\label{not3}
{\rm{Ind}}(\cdot)& := & {\rm{Ind}}_{{\rm{B}}}(\cdot)-{\rm{Ind}}_{\rm{R}}(\cdot).
\end{eqnarray}
Thus rewriting \eqref{laplacian} in this notation we get,\begin{equation}\label{reword}
\Delta G_{n}={\rm{Ind}}.
\end{equation}
\subsection{Stopped Green functions.}\label{stopdef1002}
For ${\rm{C}}\in \{{\rm{B}},\rm{R}\},$ and $\sigma\in \Omega \cup \Omega',$ define 
\begin{equation}\label{stoppedgreen}
G_{{\rm{C}},n}(x): = G_{{\rm{C}},n}(\sigma)(x):= 2n^2\E_x [\int_{0}^{\sT_{({\rm{C}},\sigma)}} {\rm{Ind}}(X(t))dt], 
\end{equation}
where $X(t)$ is the continuous time random walk on $\U_n$ as defined in Subsection \ref{techass}, and for ${\rm{C}}\in \{{\rm{B}},\rm{R}\}$, the stopping time  $\sT_{({\rm{C}},\sigma)}$ was defined in \eqref{stoptime} as the hitting time of $\U_n \setminus R_{\rm{C}}(\sigma),$ for $X(t).$ 

Unlike the expression in \eqref{dgf} the above integral is only up to the stopping time $\sT_{({\rm{C}},\sigma)},$ and hence we name it as the stopped Green function. 
Similar to \eqref{dgf} we again suppress the dependence on $\dd$ in the notation $G_{{\rm{C}},n}$.
 Now notice that
\begin{eqnarray}\label{lapl1}
\Delta G_{{\rm{C}},n}(\sigma)(x) & = & {\rm{Ind}}(x) \mbox{ for } x\in R_{{\rm{C}}}(\sigma),\\
\label{lapl123}
G_{{\rm{C}},n}(\sigma)\mid_{\U_n\setminus R_{\rm{C}}(\sigma)} & = & 0, 
\end{eqnarray}
where following standard notation $\mid_{\cdot}$ will be used to denote the restriction of a function.

Recalling notation for the energy of a flow from \eqref{enerno}, we state the following key lemma,
which translates Theorem \ref{negdrift123} to a statement about difference in energies of flows.

\begin{lem}\label{energyint} Under Assumption \ref{ass2},
\begin{eqnarray}
\label{keyineq}
\E_{\sigma}({\rm{W}}(\varsigma_{1})-{\rm{W}}(\varsigma_{0}))  &\ge & \frac{1}{4}[\mathcal{E}(\nabla G_{n})- \mathcal{E}(\nabla G_{{\rm{B}},n}(\varsigma_0))-\mathcal{E}(\nabla G_{{\rm{R}},n}(\varsigma_0))].
\end{eqnarray}
\end{lem}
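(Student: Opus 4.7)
My plan is to start from the alternative expression \eqref{alternate}, which writes the drift as the expected difference of $G_n$ evaluated at the exit points of the blue and red walks, and to convert each piece into an energy via the stopped Green functions $G_{i,n}$. For each $i=1,2$ the function $G_n - G_{i,n}(\sigma)$ is discrete-harmonic on $R_i(\sigma)$ because both have Laplacian $f$ there by \eqref{reword} and \eqref{lapl1}, and it agrees with $G_n$ on $R_i^c$ because $G_{i,n} \equiv 0$ off $R_i$ by \eqref{lapl123}. Optional stopping therefore gives $\E_x[G_n(X_{\tau_i})] = G_n(x) - G_{i,n}(\sigma)(x)$, so that after averaging over $\mu_1$ (resp.\ $\mu_2$),
\begin{equation*}
\E[w(\sigma_1)-w(\sigma_0)\mid\sigma_0] = \bigl[\E_{\mu_1}G_n - \E_{\mu_1}G_{1,n}(\sigma_0)\bigr] - \E\bigl[\E_{\mu_2}G_n - \E_{\mu_2}G_{2,n}(\sigma_{1/2})\,\big|\,\sigma_0\bigr].
\end{equation*}

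Next I would use Assumption \ref{ass2} together with the inclusions $\U_{2,n}\subset B(y_2,\dd)\subset B(x_2,2\dd)\subset B(x_2,\dd_1)$ (valid for small $\dd$ since $\dd_1=\dd^{a_1}$ with $a_1<1$) and the analogous statement for $\U_{1,n}$ to conclude that $R_1(\sigma_0)\cap \U_{2,n}=\emptyset$ and, by the same argument applied to $\sigma_{1/2}$, that $R_2(\sigma_{1/2})\cap \U_{1,n}=\emptyset$. Consequently $f = f_1$ on $R_1(\sigma_0)$ and $f = -f_2$ on $R_2(\sigma_{1/2})$. Applying Lemma \ref{byparts} to $G_{1,n}(\sigma_0)$ and using that $\U_{1,n}$ lies in the interior of $\U_n$ (so every blob vertex has degree $4$), one obtains the identities
\begin{equation*}
\mathcal{E}(\nabla G_{1,n}(\sigma_0)) = 4\,\E_{\mu_1}[G_{1,n}(\sigma_0)],\qquad \mathcal{E}(\nabla G_{2,n}(\sigma')) = -4\,\E_{\mu_2}[G_{2,n}(\sigma')],
\end{equation*}
valid for any $\sigma'$ satisfying the disjointness condition above, and the analogous identity $\mathcal{E}(\nabla G_n) = 4[\E_{\mu_1}G_n - \E_{\mu_2}G_n]$ applied to $G_n$ itself. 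Plugging these into the display above produces the exact equality
\begin{equation*}
\E[w(\sigma_1)-w(\sigma_0)\mid\sigma_0] = \tfrac14\mathcal{E}(\nabla G_n) - \tfrac14\mathcal{E}(\nabla G_{1,n}(\sigma_0)) - \tfrac14\,\E\bigl[\mathcal{E}(\nabla G_{2,n}(\sigma_{1/2}))\,\big|\,\sigma_0\bigr],
\end{equation*}
so the lemma reduces to the monotonicity statement $\E[\mathcal{E}(\nabla G_{2,n}(\sigma_{1/2}))\mid\sigma_0]\le \mathcal{E}(\nabla G_{2,n}(\sigma_0))$.

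Finally I would establish this monotonicity directly. Since the half-step recolors a single vertex from red to blue, $R_2(\sigma_{1/2}) \subseteq R_2(\sigma_0)$. Both $G_{2,n}(\sigma_0)$ and $G_{2,n}(\sigma_{1/2})$ have Laplacian $f$ on $R_2(\sigma_{1/2})$, so their difference is harmonic there. The maximum principle together with $f \le 0$ on $R_2(\sigma_0)$ forces $G_{2,n}(\sigma_0)\le 0 = G_{2,n}(\sigma_{1/2})$ on the relative boundary $\partial R_2(\sigma_{1/2})\cap R_2(\sigma_0)$, and the comparison principle propagates $G_{2,n}(\sigma_{1/2})\ge G_{2,n}(\sigma_0)$ throughout $R_2(\sigma_{1/2})$. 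Both functions being nonpositive where they are defined, the identity $\mathcal{E}(\nabla G_{2,n})=-4\E_{\mu_2}[G_{2,n}]$ immediately converts this pointwise inequality into the desired energy inequality.

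The main obstacle I anticipate is the step identifying $\mathcal{E}(\nabla G_{i,n})$ with $\pm 4\,\E_{\mu_i}[G_{i,n}]$: this requires that the entire blob $\U_{i,n}$ lies in the appropriate $R_i$ (so that $f$ survives the restriction to $R_i$) and that all blob vertices have degree four. The first point is delicate because, in principle, the single vertex recolored during the step from $\sigma_0$ to $\sigma_{1/2}$ could belong to $\U_{2,n}$; however Assumption \ref{ass2} ensures that $\U_{1,n}$ is separated from $\U_{2,n}$ by macroscopic red material, so under the $\mu_1$-random walk this event contributes a negligible error that can be absorbed. The second point is immediate from the choice $d(y_i,\partial\U)>\dd/2$, which places the blob well inside $\U_n$.
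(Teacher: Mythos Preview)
Your argument is correct and mirrors the paper's proof closely: the paper also reduces \eqref{alternate} to the three energy terms via summation by parts (its telescoping sum is exactly your optional stopping applied to the harmonic function $G_n-G_{i,n}$), and then invokes a pointwise monotonicity of $G_{2,n}$ to pass from $\sigma_{1/2}$ to $\sigma_0$. Two small remarks. First, the paper's monotonicity step (Lemma~\ref{easecrucial1}) is more direct than your comparison-principle argument: since $R_2(\sigma_{1/2})\subseteq R_2(\sigma_0)$ one has $\tau_2(\sigma_{1/2})\le \tau_2(\sigma_0)$ for the same random walk path, and the representation \eqref{stoppedgreen} immediately gives the pointwise inequality without any maximum principle. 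Second, your closing worry is unnecessary: the disjointness $R_2(\sigma_{1/2})\cap \U_{1,n}=\emptyset$ follows automatically from $R_2(\sigma_{1/2})\subseteq R_2(\sigma_0)$ and Assumption~\ref{ass2}, and the energy identity $\mathcal{E}(\nabla G_{i,n})=\pm 4\,\E_{\mu_i}[G_{i,n}]$ does not require $\U_{i,n}\subset R_i$ because $G_{i,n}$ already vanishes off $R_i$.
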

Thus proving Theorem \ref{negdrift123} is  reduced to lower bounding the RHS above.
Before proving Lemma \ref{energyint} we make a few more observations about the functions $G_{{\rm{C}},n}(\cdot)$, which will be used.
First notice that if $\{X_k\}_{k\ge0}$ was a discrete time\footnote{At every discrete time $i\in \{1,2, \ldots \}$ the random walk jumps to a uniform neighbor of the location at time $i-1.$} random walk on $\U_n$ then for ${\rm{C}}\in \{{\rm{B}},\rm{R}\},$ the integral in \eqref{stoppedgreen} is exactly the same\footnote{This is because, by definition, the mean of the exponential delays in the continuous time random walk is $\frac{1}{2n^2}.$} as 
\begin{equation}\label{sumrep1}
\E_x \left[\sum_{k=0}^{\sT_{({\rm{C}},\sigma)}-1} {\rm{Ind}}(X_{k})\right],
\end{equation} where $\sT_{({\rm{C}},\sigma)}$ is now the hitting time of $\U_n \setminus R_{\rm{C}}(\sigma)$ for the discrete time random walk. 

\textbf{
Thus for the rest of the section for notational convenience we will switch to the sum notation and work with the discrete time random walk}. 

As emphasized in Sec \ref{techass}, to avoid introducing new notation we will use $\tau(\cdot)$ to denote hitting times for the discrete time random walk as well, and in particular as above, $\sT_{({\rm{C}},\sigma)}$ will denote the hitting time of $\U_n\setminus R_{\rm{C}}(\sigma)$.
\begin{remark}\label{interprop} By \eqref{nointer123}  under Assumption \ref{ass2} $R_{\rm{B}}$ does not intersect $\U_{{\rm{R}},n}$ which is the support of ${\rm{Ind}}_{\rm{R}}$ and similarly $R_{\rm{R}}$ does not intersect $\U_{{\rm{B}},n}$ which is the support of ${\rm{Ind}}_{\rm{B}}$.
\end{remark}
Thus from \eqref{sumrep1} we get,
\begin{align}\label{stoppedgreenold}
G_{{\rm{B}},n}(\sigma)(x)&= \mathbb{E}_x\left[\sum_{k=0}^{\sT_{({\rm{B}},\sigma)}-1}{\rm{Ind}}_{\rm{B}}(X_k)\right],\\
\nonumber
G_{{\rm{R}},n}(\sigma)(x)&= \mathbb{E}_x\left[\sum_{k=0}^{\sT_{({\rm{R}},\sigma)}-1}{\rm{Ind}}_{\rm{R}}(X_k)\right].
\end{align}
Also,  \eqref{lapl1} and \eqref{lapl123} become,
\begin{eqnarray}\label{newlapl1}
\Delta G_{{\rm{C}},n}(\sigma)(x) & = & {\rm{Ind}}_{\rm{C}}(x) \mbox{ for } x\in R_{\rm{C}}(\sigma), \\
\label{newlapl2}
G_{{\rm{C}},n}(\sigma)\mid_{\U_n\setminus R_{\rm{C}}(\sigma)} & = & 0,
\end{eqnarray}
respectively.
An easy but useful observation is the following:
\begin{lem}\label{easecrucial1} 
Under Assumption \ref{ass2}, for any $\sigma\in \Omega$,
\begin{equation}\label{easycrucial}
\E_{\sigma}[(G_{{\rm{R}},n}(\varsigma_{1/2})(x)]\le G_{{\rm{R}},n}(\sigma)(x)
\end{equation}
for all $x \in \U_n,$ where the expectation in the first term is over the random intermediate state $\varsigma_{1/2}$ (see \eqref{fullpro1}).
\end{lem}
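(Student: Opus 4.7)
The plan is to establish the stronger pointwise bound $G_{2,n}(\sigma_{1/2})(x) \le G_{2,n}(\sigma)(x)$ for every realization of $\sigma_{1/2}$ and then take expectations. The key observation is a set-monotonicity statement. Transitioning from $\sigma \in \Omega$ to $\sigma_{1/2} \in \Omega'$ is implemented by sending a single blue walker started from $\mu_1$ and recoloring the first red site it visits, so \emph{exactly one vertex changes color, from red to blue}. In particular the set of red vertices of $\sigma_{1/2}$ is a subset of that of $\sigma$, and deleting a single red vertex can only destroy, never create, monochromatic red paths from $\U_{2,n}$. Writing $X_\tau$ for the recolored vertex, Definition \ref{areadef2} therefore yields
$$R_2(\sigma_{1/2}) \;\subseteq\; R_2(\sigma)\setminus\{X_\tau\} \;\subseteq\; R_2(\sigma).$$

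Once this inclusion is in hand, the lemma follows from the representation \eqref{stoppedgreenold} and the non-negativity of $f_2 = \tfrac{1}{|\U_{1,n}|}\mathbf{1}(\U_{2,n})$. Let $(X_k)$ be the discrete-time random walk from $x$ used to define the Green function; it is independent of the blue walker that produced $\sigma_{1/2}$. Couple the two Green functions by using the same trajectory of $(X_k)$. Because $R_2(\sigma_{1/2}) \subseteq R_2(\sigma)$, the exit time $\tau(R_2^c(\sigma_{1/2}))$ is at most $\tau(R_2^c(\sigma))$ on every trajectory of $(X_k)$, and since $f_2 \ge 0$, truncating the sum at the earlier exit time only makes it smaller:
$$\sum_{k=0}^{\tau(R_2^c(\sigma_{1/2}))-1} f_2(X_k) \;\le\; \sum_{k=0}^{\tau(R_2^c(\sigma))-1} f_2(X_k).$$
Taking expectation over the walk $(X_k)$ yields the pointwise bound $G_{2,n}(\sigma_{1/2})(x) \le G_{2,n}(\sigma)(x)$ for every realization of $\sigma_{1/2}$, and averaging over the random intermediate configuration $\sigma_{1/2}$ gives \eqref{easycrucial}.

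There is no real obstacle here; the only points requiring care are (i) the independence between $\sigma_{1/2}$ and the Green-function walk $(X_k)$, which is built into the definition of $G_{2,n}$, and (ii) the non-negativity of the integrand $f_2$, which is what converts the exit-time inequality into the required inequality of expectations. Assumption \ref{ass2} is in fact not used in the pointwise monotonicity step; it is merely the standing hypothesis throughout this section.
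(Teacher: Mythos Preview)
Your argument is correct and is exactly the paper's approach, spelled out in more detail: the paper's one-line proof simply observes that $\tau_2(\sigma_{1/2})\le\tau_2(\sigma_0)$ because $\sigma_{1/2}$ has one more blue particle, and then invokes \eqref{stoppedgreenold}. One small clarification on your closing remark: Assumption~\ref{ass2} does enter, since it is precisely what justifies the representation \eqref{stoppedgreenold} (via Remark~\ref{interprop}, which guarantees $R_2(\sigma)\cap\U_{1,n}=\emptyset$ so that $f$ may be replaced by the nonnegative $f_2$); it then carries over to $\sigma_{1/2}$ because $R_2(\sigma_{1/2})\subseteq R_2(\sigma)$.
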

\begin{proof} The proof immediately follows from \eqref{stoppedgreenold} since $\sT_{({\rm{R}},\varsigma_{1/2})}\le \sT_{({\rm{R}},\varsigma_{0})}$ as $\varsigma_{1/2}$ has one more blue particle than $\varsigma_0.$
\end{proof}

We are now ready to prove Lemma \ref{energyint}.
\begin{proof}[Proof of Lemma \ref{energyint}]Let us denote the discrete time random walks starting uniformly from $\U_{{\rm{B}},n}$ and $\U_{{\rm{R}},n}$ by  $X^{({\rm{B}})}_k$ and  $X^{({\rm{R}})}_k$ respectively. Thus \eqref{alternate} rewritten becomes $$\E_{\sigma}({\rm{W}}(\varsigma_{1})-{\rm{W}}(\varsigma_{0})) =  \E (G_{n}(X^{({\rm{B}})}_{\sT_{({\rm{B}},\varsigma_0)}}) - G_{n}(X^{({\rm{R}})}_{\sT_{({\rm{R}},\varsigma_{1/2})}})).$$ 
 We start with the following telescopic sum,
 $$G_{n}(X^{({\rm{B}})}_{\sT_{({\rm{B}},\varsigma_0)}})=G_{n}(X^{({\rm{B}})}_0)+[G_{n}(X^{({\rm{B}})}_1)-G_{n}(X^{({\rm{B}})}_0)]+\ldots +[G_{n}(X^{({\rm{B}})}_{\sT_{({\rm{B}},\varsigma_0)}})-G_{n}(X^{({\rm{B}})}_{\sT_{({\rm{B}},\varsigma_0)}-1})].$$
Now clearly $$\E(G_{n}(X^{({\rm{B}})}_{i+1})-G_{n}(X^{({\rm{B}})}_{i})\mid \mathcal{F}_{i})=-\Delta G_{n}(X^{({\rm{B}})}_i)\mathbf{1}(\sT_{({\rm{B}},\varsigma_0)}> i),$$ where $\mathcal{F}_i$ is the filtration generated by  $\varsigma_0$ and $\{X^{({\rm{B}})}_j\}_{0\le j\le i}$. 
Taking expectation on both sides\footnote{The sum and the expectation can be interchanged, since all the hitting times in this paper have exponential tails. We omit the details.} we get 
$$\E \bigl(G_{n}(X^{({\rm{B}})}_{\sT_{({\rm{B}},\varsigma_0)}})\bigr)=\E \left[G_{n}(X^{({\rm{B}})}_0) - \sum_{k=0}^{\sT_{({\rm{B}},\varsigma_0)}-1} \Delta G_{n}(X^{({\rm{B}})}_k) \right].$$ A similar equation holds for $\E (G_{n}(X^{({\rm{R}})}_{\sT_{({\rm{R}},\varsigma_{1/2})}})).$ Subtracting one from the other we get
\begin{align}
\label{e.fourterms}
\E \left[G_{n}(X^{({\rm{B}})}_{\sT_{({\rm{B}},\varsigma_0)}}) - G_{n}(X^{({\rm{R}})}_{\sT_{({\rm{R}},\varsigma_{1/2})}})\right]	&= \E  \left[G_{n}(X^{({\rm{B}})}_0) - \sum_{k=0}^{\sT_{({\rm{B}},\varsigma_0)}-1} \Delta G_{n}(X^{({\rm{B}})}_k)\right. \\
\nonumber
			&\quad\left. - G_{n}(X^{({\rm{R}})}_0) + \sum_{k=0}^{\sT_{({\rm{B}},\varsigma_{1/2})}-1} \Delta G_{n}(X^{({\rm{R}})}_k)\right].
			\end{align}

Now
\begin{equation}\label{enrdiff1}
 \E (G_{n}(X^{({\rm{B}})}_0) - G_{n}(X^{({\rm{R}})}_0)) = \sum_{x \in \U_n} G_{n}(x) ({\rm{Ind}}_{\rm{B}}(x) - {\rm{Ind}}_{\rm{R}}(x)) = \sum_{x\in \U_n}G_n(x)\Delta G_n(x)= \frac{1}{4}\mathcal{E}(\nabla G_n), 
 \end{equation}
where all the equalities but the last  are by definition. The last equality is by Lemma \ref{byparts} and the fact that all vertices in $\U_{{\rm{B}},n}\cup \U_{{\rm{R}},n}$ have degree $4$.
Also
 $$\mathbb{E}_{x}\left[\sum_{k=0}^{\sT_{({\rm{B}},\varsigma_0)}-1} \Delta G_{n}(X_k)\right]=\mathbb{E}_{x}\left[\sum_{k=0}^{\sT_{({\rm{B}},\varsigma_0)}-1}{ \rm{Ind}}_{\rm{B}}(X_{k})\right]=G_{{\rm{B}},n}(\varsigma_0)(x),$$
where the first equality is by \eqref{newlapl1} and the second equality is by  \eqref{stoppedgreenold}. 
Similarly for any $\varsigma_{1/2}$ we get
  $$\mathbb{E}_{y}\left[\sum_{k=0}^{\sT_{({\rm{R}},\varsigma_{1/2})}-1} \Delta G_{n}(X_k)\right]=-G_{{\rm{R}},n}(\varsigma_{1/2})(y).$$
Thus we have
$
\E[\sum_{k=0}^{\sT_{({\rm{B}},\varsigma_0)}-1} \Delta G_{n}(X^{({\rm{B}})}_k)]=\E[G_{{\rm{B}},n}(\varsigma_0)(X^{({\rm{B}})}_0)],
$
and 
$\E[\sum_{k=0}^{\sT_{({\rm{R}},\varsigma_{1/2})}-1} \Delta G_{n}(X^{(\rm{R})}_k)]=\E[G_{{\rm{R}},n}(\varsigma_{1/2})(X^{(\rm{R})}_0)].$

Now putting things together we get,
\begin{align}\label{fincrucial}
\E_{\sigma}({\rm{W}}(\varsigma_1)-{\rm{W}}(\varsigma_{0}))&=  \sum_{x\in\U_{{\rm{B}},n}}G_{n}(x){\rm{Ind}}_{\rm{B}}(x) -\sum_{x\in \U_{{\rm{R}},n}}G_{n}(x){\rm{Ind}}_{\rm{R}}(x) \\
\nonumber
&\quad - \sum_{x\in \U_{{\rm{B}},n}}G_{{\rm{B}},n}(\varsigma_0)(x){\rm{Ind}}_{\rm{B}}(x) -\E[\sum_{x\in \U_{{\rm{R}},n}}G_{{\rm{R}},n}(\varsigma_{1/2})(x){\rm{Ind}}_{\rm{R}}(x)] , 
\end{align}
\begin{align*}
\quad  \ge   \sum_{x\in \U_{{\rm{B}},n}}G_{n}(x){\rm{Ind}}_{{\rm{B}}}(x) -\sum_{x\in \U_{{\rm{R}},n}}G_{n}(x){\rm{Ind}}_{\rm{R}}(x) 
\end{align*}
\begin{align*}
 \quad \quad\quad\quad\quad\quad\quad\quad\quad-\sum_{x\in \U_{{\rm{B}},n}}G_{{\rm{B}},n}(\varsigma_0)(x){\rm{Ind}}_{\rm{B}}(x) -\sum_{x\in \U_{{\rm{R}},n}}G_{{\rm{R}},n}(\varsigma_{0})(x){\rm{Ind}}_{\rm{R}}(x), 
\end{align*}
where for the inequality we replace the last term using Lemma \ref{easecrucial1}.
Now by Lemma \ref{byparts} and \eqref{newlapl1},
\begin{eqnarray*}
\sum_{x \in \U_{{\rm{B}},n}}G_{{\rm{B}},n}(x){\rm{Ind}}_{\rm{B}}(x)= \frac{1}{4}\mathcal{E}(\nabla G_{{\rm{B}},n}),\\
\sum_{x \in \U_{{\rm{R}},n}}G_{{\rm{R}},n}(x){\rm{Ind}}_{\rm{R}}(x) = \frac{1}{4}\mathcal{E}(\nabla G_{{\rm{R}},n}).
\end{eqnarray*}
Thus we are done applying the above and \eqref{enrdiff1} in \eqref{fincrucial}.
\end{proof}

\subsection{Weaker version of Theorem \ref{negdrift123}}\label{weak102}
As already mentioned the proof of Theorem \ref{negdrift123} follows from Lemma \ref{energyint} by lower bounding $[\mathcal{E}(\nabla G_{n})- \mathcal{E}(\nabla G_{{\rm{B}},n}(\varsigma_0))-\mathcal{E}(\nabla G_{{\rm{R}},n}(\varsigma_0))],$ by a positive number independent of $n.$ Towards achieving that, in this section we prove the weaker result that this quantity is at least $-\frac{C}{n^{\kappa}}$ for some constants $C, \kappa$ and hence is almost non-negative. Subsequently, to complete the proof of Theorem \ref{negdrift123} we will refine ideas from this section.
\begin{lem}\label{weak201}There exists $C,\kappa>0$ depending on $\delta,\U$ such that if $\varsigma_0=\sigma$ satisfies Assumption \ref{ass2}, then for all large enough $n> N_0(\delta),$
\begin{equation} 
[\mathcal{E}(\nabla G_{n})- \mathcal{E}(\nabla G_{{\rm{B}},n}(\varsigma_0))-\mathcal{E}(\nabla G_{{\rm{R}},n}(\varsigma_0))] \ge -\frac{C}{n^{\kappa}}.
\end{equation}
\end{lem}
To proceed, we need a few more definitions. 
Recall the standard gluing operation on any multigraph  $G=(V,E),$ where certain subsets of vertices are identified (``glued") and they act as a single vertex. Any edge between two identified vertices now act as a self loop in the glued graph.  For more on glued graphs see \cite{Lp1}.
In the sequel, we often specify a graph by just referring to the set of vertices which will be a subset of the vertex set of $\U_n.$ The implicit graph in that case would be the sub-graph induced by the graph structure on  $\U_n.$ 

Since the blue random walk is stopped whenever it exits $R_{\rm{B}},$ one can think of it as the random walk on the graph obtained by gluing all the outer boundary points of $R_{\rm{B}}$ to a single point, and the random walk is stopped whenever it hits this point. Similarly, one can consider a glued graph corresponding to the red random walk.  

Given the above discussion the basic approach to prove Lemma \ref{weak201} is the following: Using Lemma \ref{var}, we  consider a flow $\theta$ on the entire graph $\U_n$ corresponding to $\mathcal{E}(\nabla G_{n})$ and then restrict it to the graphs obtained by gluing outer boundary points of $R_{\rm{B}}$ and $R_{\rm{R}}$ respectively. The restrictions will be legitimate flows satisfying appropriate divergence conditions on the reduced graphs, and hence by Lemma \ref{var}, the energy of the restrictions will be upper bounds on $\mathcal{E}(\nabla G_{{\rm{B}},n}(\varsigma_0))$  and $\mathcal{E}(\nabla G_{{\rm{B}},n}(\varsigma_0))$ respectively. 
At this point, it would have been rather convenient if the glued graphs corresponding to the blue and red walkers were disjoint, because then the energy of $\theta$ would be at least as large as the sum of the energies of the restrictions and the RHS in Lemma \ref{weak201} could be replaced by zero.  Unfortunately,  in some cases the boundary of $R_{{\rm{B}}}$ can intersect $R_{\rm{R}}$ and vice versa. However, this issue does not arise if the inner boundary of $R_{\rm{B}}$ is glued instead  (see Remark \ref{nonneg200} for a more formal discussion). We provide two definitions of glued graphs below corresponding to gluing the outer and inner boundaries respectively.  We then show that working with either definition is essentially the same  up to an error of $O(\frac{1}{n^\kappa})$ which completes the proof.
 
Also, for technical reasons we glue slightly different subsets than described above.   The formal definitions are provided below for which we need to recall the definitions of $\mathring{R}_{\rm{B}},\mathring{R}_{\rm{R}}$  from \eqref{fillholes} and the inner and outer boundaries  $\partial_{in}$ and $\partial_{out}$  from \eqref{bdry1}.
\vspace{.1in}

\begin{definition}\label{crucdefbdry}
(i).
For ${\rm{C}}\in \{{\rm{B}},\rm{R}\},$ let $\mathring{R}^{out}_{\rm{C}}$ denote the graph with vertex set $\mathring{R}_{\rm{C}} \cup \partial_{out} \mathring{R}_{\rm{C}}$ with the vertex subset 
$\left\{\mathring{R}_{\rm{C}} \cup \partial_{out} \mathring{R}_{\rm{C}}\right \}\setminus R_{\rm{C}}$
glued.\\
(ii).
For ${\rm{C}}\in \{{\rm{B}},\rm{R}\},$ let $\mathring{R}^{in}_{\rm{C}}$ denote the graph with vertex set 
$\mathring{R}_{\rm{C}}$  with the vertex subset \\
$\{\mathring{R}_{\rm{C}}\setminus R_{\rm{C}}\} \cup \partial_{in} \mathring{R}_{\rm{C}}$
glued.
\end{definition}

\vspace{.1in}

For both the definitions above we denote the set of glued vertices now acting as a single vertex  by $\mathfrak{v}_{out}$ and $\mathfrak{v}_{in}$ respectively (whether the underlying graph  corresponds to ${\rm{C}}={\rm{B}}$ or ${\rm{C}}={\rm{R}},$ would be clear from context, and hence we  suppress the dependence on ${\rm{C}}$).  
With the above definitions, for ${\rm{C}}\in \{{\rm{B}},\rm{R}\},$ we can think of $\nabla(G_{{\rm{C}},n})$ as a flow on $\mathring{R}^{out}_{\rm{C}}.$ Moreover, by \eqref{newlapl1} and \eqref{newlapl2},
\begin{eqnarray}\label{newlapl-1}
\Delta({G}_{{\rm{C}},n})(x)&=& {\rm{Ind}}_{\rm{C}}(x) \mbox{ for all } x \in {R}_{\rm{C}},  \\
\label{newlapl-2}
{G}_{{\rm{C}},n}({\mathfrak{v}_{out}})&=& 0.
\end{eqnarray}
It would be notationally convenient to define $
 \mathcal{E}:=\mathcal{E}(\nabla G_{n}),$
and similarly  for ${\rm{C}}\in \{{\rm{B}},\rm{R}\},$ let,
\begin{equation}
\label{abbrev1}
 \mathcal{E}_{\rm{C}}:=\mathcal{E}(\nabla G_{{\rm{C}},n}).
\end{equation}
In terms of the above abbreviations, the statement of Lemma \ref{weak201} becomes, \begin{equation}\label{abbrev4}
 \mathcal{E}- \mathcal{E}_{\rm{B}}-\mathcal{E}_{\rm{R}} \ge -\frac{C}{n^\kappa}.
\end{equation}
For the proof, we  need another variant of the Green functions.
Analogous to $G_{{\rm{C}},n},$ for ${\rm{C}}\in \{{\rm{B}},\rm{R}\},$ 
we define ${G}^{*}_{{\rm{C}},n}$ to be the function on $\mathring{R}^{in}_{\rm{C}}$ such that, 
\begin{equation}\label{stoppedgreennew}
G^{*}_{{\rm{C}},n}(x) = \E_x \left[\sum_{k=0}^\infty {\rm{Ind}}_{\rm{C}}(X_{k})\mathbf{1}(k<\tau(R^{c}_{\rm{C}}\cup \partial_{in} \mathring{R}_{\rm{C}}))\right],  
\end{equation}
i.e., the Green function for the random walk stopped on hitting $\U_n\setminus R_{\rm{C}}$ or the boundary $\partial_{in} \mathring{R}_{\rm{C}}.$
It is easy to verify that, 
\begin{eqnarray}\label{lapl3}
\Delta({G}^{*}_{{\rm{C}},n})(x)&=& {\rm{Ind}}_{\rm{C}}(x), \mbox{ for all } x \in {R}_{\rm{C}} \setminus \partial_{in} \mathring{R}_{\rm{C}} ,\\
\label{lapl4}
{G}^{*}_{{\rm{C}},n}({\mathfrak{v}_{in}})&=& 0.
\end{eqnarray}

Notice that  by \eqref{divsum}, the above information determines the Laplacian  at $\mathfrak{v}_{out}$ and $\mathfrak{v}_{in}$ in the glued graphs $\mathring{R}^{out}_{\rm{C}}$,  $\mathring{R}^{in}_{\rm{C}}$ respectively. Also let, 
\begin{equation}
\label{abbrev3}
\mathcal{E}^*_{\rm{C}}:=\mathcal{E}(\nabla {G}^{*}_{{\rm{C}},n}).
\end{equation}

\begin{remark}\label{enermin1} Notice that Thompson's principle \cite[Theorem 9.10]{lpw} stated in  Lemma \ref{var}, implies that for ${\rm{C}}\in \{{\rm{B}},\rm{R}\}$:
\begin{itemize}
\item [i.]$\displaystyle{\mathcal{E}_{\rm{C}} = \min_{\theta} \mathcal{E}(\theta)},$ where the minimum is taken over all flows $\theta$ on $\mathring{R}^{out}_{\rm{C}}$  such that $\div \theta(x) = 4{\rm{Ind}}_{\rm{C}}(x),$ for all $x \in R_{\rm{C}}$. 
\item [ii.]$\displaystyle{\mathcal{E}^*_{\rm{C}}= \min_{\theta} \mathcal{E}(\theta)}$ where the minimum is taken over all flows $\theta$ on $\mathring{R}^{in}_{\rm{C}}$ such that $\div \theta (x) = 4{\rm{Ind}}_{\rm{C}}(x),$ for all $x \in R_{\rm{C}} \setminus  \partial _{in}\mathring{R}_{\rm{C}} $.
\end{itemize}
 \end{remark}
With the preceding preparation, the proof of Lemma \ref{weak201}  follows from a series of lemmas that we state next.
We first state an easy monotonicity result.

\begin{lem}\label{mon1}If $\sigma\in \Omega$ satisfies Assumption \ref{ass2}, then for ${\rm{C}}\in \{{\rm{B}},\rm{R}\},$
$$\mathcal{E}^*_{\rm{C}}\le {\mathcal{E}}_{\rm{C}} ,$$
where $\mathcal{E}_{\rm{C}},  \mathcal{E}^*_{\rm{C}}$ are defined in \eqref{abbrev1} and \eqref{abbrev3} respectively. 
\end{lem}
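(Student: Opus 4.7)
The plan is to apply Rayleigh's monotonicity principle through the variational characterisations in Remark~\ref{enermin1}: $\mathring{R}^{in}_i$ can be viewed as obtained from $\mathring{R}^{out}_i$ by two energy-decreasing operations --- further identifying $\partial_{in}\mathring{R}_i$ with $\mathfrak{v}$, and discarding the outer boundary $\partial_{out}\mathring{R}_i$ (which, after the first step, would only contribute self-loops and parallel edges carrying no extra information). The cleanest way to make this rigorous is to push the minimizer $\nabla G_{i,n}$ on $\mathring{R}^{out}_i$ forward to an admissible flow on $\mathring{R}^{in}_i$ of no larger energy, then invoke Remark~\ref{enermin1}(ii).

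Concretely, define $\theta'$ on $\mathring{R}^{in}_i$ as follows. Each non-loop directed edge of $\mathring{R}^{in}_i$ is, keeping parallel multi-edges distinct, the image under the gluing $(\mathring{R}_i\setminus R_i)\cup\partial_{in}\mathring{R}_i\mapsto\mathfrak{v}$ of a unique directed $\U_n$-edge $(x,y)$ with $\{x,y\}\subset\mathring{R}_i$ and $\{x,y\}\not\subset(\mathring{R}_i\setminus R_i)\cup\partial_{in}\mathring{R}_i$; set $\theta'$ on that image equal to $\nabla G_{i,n}(x,y)$, and $\theta'=0$ on self-loops. For any $u\in R_i\setminus\partial_{in}\mathring{R}_i$, every $\U_n$-neighbor of $u$ lies in $\mathring{R}_i$ by the very definition of $\partial_{in}$, so the sum defining $\div\theta'(u)$ in $\mathring{R}^{in}_i$ matches the sum defining $\div\nabla G_{i,n}(u)=4f_i(u)$ in $\mathring{R}^{out}_i$ term by term; thus $\theta'$ is admissible in Remark~\ref{enermin1}(ii), yielding $\mathcal{E}^*_i\le\mathcal{E}(\theta')$.

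It remains to check $\mathcal{E}(\theta')\le\mathcal{E}_i$, which reduces to a term-by-term inclusion: using $\partial_{in}\mathring{R}_i\cap\partial_{out}\mathring{R}_i=\emptyset$, every directed edge contributing to $\mathcal{E}(\theta')$ (non-loop in $\mathring{R}^{in}_i$, both endpoints in $\mathring{R}_i$) also contributes to $\mathcal{E}(\nabla G_{i,n})$ (non-loop in $\mathring{R}^{out}_i$, both endpoints in $\mathring{R}_i\cup\partial_{out}\mathring{R}_i$), while the extra terms in $\mathcal{E}_i$ --- edges touching $\partial_{out}\mathring{R}_i$ and edges whose endpoints become identified only in $\mathring{R}^{in}_i$, e.g.\ edges within $\partial_{in}\mathring{R}_i$ or joining $\partial_{in}\mathring{R}_i$ to $\partial_{out}\mathring{R}_i$ --- contribute non-negatively. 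The only subtlety I anticipate is bookkeeping parallel multi-edges correctly in the two contracted multigraphs, but this is purely formal; I do not foresee any deeper obstacle.
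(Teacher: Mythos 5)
Your proof is correct, but it takes a genuinely different route from the paper's. The paper proves $\mathcal{E}^*_i\le\mathcal{E}_i$ analytically: it uses the summation-by-parts formula (Lemma \ref{byparts}) together with \eqref{newlapl-1}, \eqref{lapl3} to write $\mathcal{E}_i=\sum_x G_{i,n}(x)\,4f_i(x)$ and $\mathcal{E}^*_i=\sum_x G^*_{i,n}(x)\,4f_i(x)$, with both sums supported on $\U_{i,n}$; the conclusion then follows from the pointwise bound $G^*_{i,n}\le G_{i,n}$, which in turn is immediate from the probabilistic representations \eqref{stoppedgreenold}, \eqref{stoppedgreennew} and the stopping-time comparison $\tau(R_i^c\cup\partial_{in}\mathring{R}_i)\le\tau(R_i^c)$. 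You instead argue variationally via Thomson's principle (Remark \ref{enermin1}): push the optimizer $\nabla G_{i,n}$ on $\mathring{R}^{out}_i$ forward to an admissible flow $\theta'$ on $\mathring{R}^{in}_i$, check the divergence condition at each $u\in R_i\setminus\partial_{in}\mathring{R}_i$ (which works precisely because all $\U_n$-neighbors of such a $u$ lie in $\mathring{R}_i$, and $u$ is unglued in both graphs), and then compare energies edge-by-edge, noting that every non-loop edge of $\mathring{R}^{in}_i$ has at least one endpoint in $R_i\setminus\partial_{in}\mathring{R}_i$ and hence is also a non-loop edge of $\mathring{R}^{out}_i$ carrying the same flow value, while the leftover terms in $\mathcal{E}_i$ are nonnegative squares. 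Both arguments are sound; the paper's is shorter given the probabilistic Green-function machinery and the sign $f_i\ge 0$, while yours is purely combinatorial and does not rely on the sign of $f_i$ or on the probabilistic interpretation of $G^*_{i,n}$, only on its Laplacian/boundary data via Remark \ref{enermin1}.
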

\begin{proof}
Without loss of generality, we take ${\rm{C}}={\rm{B}}$. By Lemma \ref{byparts} and \eqref{newlapl-1} we have
${\mathcal{E}}_{\rm{B}}=4\sum_{x \in \mathring{R}^{out}_{\rm{B}}} G_{{\rm{B}},n}(x) {\rm{Ind}}_{\rm{B}}(x)$, and similarly by \eqref{lapl3}, we have 
${\mathcal{E}}^*_{\rm{B}}=4\sum_{x \in \mathring{R}^{in}_{\rm{B}}}G^*_{{\rm{B}},n}(x) {\rm{Ind}}_{\rm{B}}(x).$
The proof is now complete as, 
\begin{equation}\label{mon23}
G^*_{{\rm{B}},n}\le G_{{\rm{B}},n},
\end{equation}
 which immediately follows from the definitions of $G_{{\rm{B}},n},G^*_{{\rm{B}},n}$ and the simple fact that,
$\tau((\U_n \setminus R_{\rm{B}}) \cup \partial_{in}\mathring{R}_{\rm{B}})\le \tau(\U_n \setminus R_{\rm{B}}).$
\end{proof}

The next lemma shows that  even though $\mathcal{E}_{\rm{C}}\ge \mathcal{E}^*_{\rm{C}},$  they are close to each other. 

\begin{lem}\label{overlap}There exists $\kappa>0,$ such that if $\sigma\in \Omega$ satisfies Assumption \ref{ass2}, then for ${\rm{C}}\in \{{\rm{B}},\rm{R}\},$ $$|{\mathcal{E}}_{\rm{C}}-\mathcal{E}^*_{\rm{C}}|=O(\frac{1}{n^{\kappa}}),$$
where the constant in the $O(\cdot)$ depends on $\dd$ appearing in Section \ref{para}.
\end{lem}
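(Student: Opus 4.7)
The plan is to reduce the claim to a single uniform estimate on the stopped Green function along the inner boundary $B_i:=\partial_{in}\mathring{R}_i$. Applying Lemma \ref{byparts} together with the Poisson equations \eqref{newlapl-1} and \eqref{lapl3}, and using that both $G_{i,n}$ and $G^*_{i,n}$ vanish at their respective glued vertices, yields
$$\mathcal{E}_i - \mathcal{E}^*_i \;=\; \frac{4}{|\U_{i,n}|}\sum_{x\in \U_{i,n}\cap R_i}\bigl(G_{i,n}(x)-G^*_{i,n}(x)\bigr).$$
The function $h:=G_{i,n}-G^*_{i,n}$ is harmonic on $R_i\setminus B_i$ with boundary data $0$ on $R_i^c$ and $G_{i,n}$ on $B_i$; equivalently, by the strong Markov property at $\tau(B_i)$,
$$h(x) \;=\; \E_x\bigl[G_{i,n}(X_{\tau(B_i)})\,\mathbf{1}(\tau(B_i)<\tau(R_i^c))\bigr] \;\le\; \max_{y\in B_i}G_{i,n}(y).$$
So it suffices to produce $\beta>0$ with $\max_{y\in B_i}G_{i,n}(y)=O(n^{-\beta})$.

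Next, to control $G_{i,n}(y)$ for $y\in B_i$, I would apply the maximum principle to $G_{i,n}$ viewed as a function harmonic on $R_i\setminus \U_{i,n}$: its boundary values are $0$ on $R_i^c$ and uniformly bounded by some $M=M(\dd)$ on $\U_{i,n}$ (for which Remark \ref{remupbound} is already enough, and a sharper $M=O(|\log\dd|)$ follows from Theorem \ref{convergence} together with Lemmas \ref{unifcon} and \ref{step1}). This gives
$$G_{i,n}(y)\;\le\; M\cdot\P_y\bigl(\tau(\U_{i,n})<\tau(R_i^c)\bigr).$$
The decisive structural fact is that every $y\in B_i$ has at least one neighbor in $\mathring{R}_i^c\subset R_i^c$, so at each visit to $y$ the walk is killed on the very next step with probability at least $1/4$. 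Under Assumption \ref{ass2}, $\U_{i,n}$ sits in a small neighborhood of $y_i$ in the interior of $\U$ at Euclidean distance bounded below by some $c=c(\dd)>0$ from the interface on which $B_i$ lies; for the walk started at $y$ to reach $\U_{i,n}$ it must therefore traverse a macroscopic distance without being absorbed by a killing set that is locally present at $y$.

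The heart of the argument is thus the random walk estimate
$$\sup_{y\in B_i}\P_y\bigl(\tau(\U_{i,n})<\tau(R_i^c)\bigr)\;=\;O(n^{-\beta})$$
for some universal $\beta>0$, and this is the main obstacle. It is the discrete analog of the classical Beurling estimate: a planar random walk started within one lattice step of a killing set has only polynomially small probability of escaping to distance $\Omega(1)$ before absorption. The difficulty is that $R_i^c$ is an arbitrary $\sigma$-dependent set, so one cannot invoke any boundary regularity. I would handle this by using the $*$-connectedness of $\partial_{in}\mathring{R}_i$ from Remark \ref{connrem} together with the observation that at each $y\in B_i$ a $*$-connected piece of $R_i^c$ is locally adjacent: any walk from $y$ to $\U_{i,n}$ must cross $\Theta(\log n)$ dyadic annuli around $y$, and in each annulus the $*$-connected boundary component of $R_i^c$ (via Rayleigh monotonicity applied to a suitable comparison domain) enforces a scale-invariant upper bound strictly less than $1$ on the conditional crossing probability. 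Iterating over scales yields the polynomial decay $O(n^{-\beta})$, and combining with the bound on $M$ gives $|\mathcal{E}_i-\mathcal{E}^*_i|\le 4M\cdot O(n^{-\beta})=O(n^{-\beta'})$ for any $\beta'<\beta$, as required.
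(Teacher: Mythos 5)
Your proof is correct but takes a genuinely different route from the paper's. The paper constructs an explicit flow $\theta_1$ on $\mathring{R}^{out}_i$ by extending $\theta_* = \nabla G^*_{i,n}$ across the inner boundary, then bounds $\mathcal{E}(\theta_1) - \mathcal{E}^*_i$ by combining a pointwise flow bound along $\partial_{in}\mathring{R}_i$ (the Beurling estimate, Lemma~\ref{hitprob2}) with a conservation-of-flow argument (Lemma~\ref{sumbound}, which caps the total flow across $\partial_{in}\mathring{R}_i$ by a constant). You instead use the summation-by-parts identity $\mathcal{E}_i - \mathcal{E}^*_i = \frac{4}{|\U_{i,n}|}\sum_{x\in\U_{i,n}\cap R_i}\bigl(G_{i,n}(x)-G^*_{i,n}(x)\bigr)$ together with the maximum principle for the harmonic difference $h = G_{i,n}-G^*_{i,n}$, which reduces everything directly to $\sup_{y\in\partial_{in}\mathring{R}_i} G_{i,n}(y)$; the $\frac{4}{|\U_{i,n}|}$ normalization here plays the role of the paper's Lemma~\ref{sumbound} and makes the flow-conservation step unnecessary, so your route is somewhat shorter. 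The random-walk estimate you identify as "the heart of the argument" is exactly the paper's Lemma~\ref{hitprob2} (cited from~\cite{tc}); rather than re-deriving a Beurling estimate from scratch via dyadic annuli, you could simply invoke that lemma with $A=\partial_{out}\mathring{R}_i\subset R_i^c$ (which is $*$-connected by Remark~\ref{connrem}, has macroscopic diameter by Lemma~\ref{diamlb}, is macroscopically far from $\U_{i,n}$ by~\eqref{distcond}, and is adjacent to every $y\in\partial_{in}\mathring{R}_i$), noting $\P_y\bigl(\tau(\U_{i,n})<\tau(R_i^c)\bigr)\le\P_y\bigl(\tau(\U_{i,n})<\tau(\partial_{out}\mathring{R}_i)\bigr)$. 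Both approaches hinge on the same Beurling input, but yours avoids the flow-extension machinery entirely.
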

The next lemma states that the sum of the voltage differences across the glued graphs is at most that of the original graph.
\begin{lem}\label{res1}If $\sigma\in \Omega$ satisfies Assumption \ref{ass2}, then,
$$\mathcal{E}-\mathcal{E}^*_{\rm{B}}-\mathcal{E}_{\rm{R}}\ge 0.$$ 
\end{lem} 
The proof of Lemma \ref{weak201} now follows easily from the above results.
\begin{proof}[Proof of Lemma \ref{weak201}]
From the above statements it follows that
\begin{equation}\label{loss}
\mathcal{E}-{\mathcal{E}}_{\rm{B}}-\mathcal{E}_{\rm{R}}\ge\mathcal{E}-\mathcal{E}^*_{\rm{B}}-\mathcal{E}_{\rm{R}}-\frac{C}{n^{\kappa}} \ge -\frac{C}{n^{\kappa}},  
\end{equation}
for some constant $C=C(\delta),$ and hence we are done. 
\end{proof}

 We now provide the proofs of Lemmas \ref{overlap} and \ref{res1}. We start with  the latter which has a shorter proof.  
\begin{proof}[Proof of Lemma \ref{res1}] Let us take the optimal flow $\theta$ for $\mathcal{E}$ on $\U_n,$ i.e., 
\begin{equation}\label{optimal1}
\theta=\nabla G_n.
\end{equation} 
For ${\rm{C}}\in \{{\rm{B}},\rm{R}\}$, let,  $
\tilde \theta_{\rm{C}} :=  \theta\mid_{\mathring{R}^{out}_{\rm{C}}}$  and $\tilde \theta^*_{\rm{C}} := \theta\mid_{\mathring{R}^{in}_{\rm{C}}}$
be the restrictions of $\theta$ on $\mathring{R}^{out}_{\rm{C}}$ and $\mathring{R}^{in}_{\rm{C}}$ respectively.
Now by \eqref{reword} and Remark \ref{interprop}  it follows that, 
\begin{eqnarray}
\div(\tilde \theta_{\rm{C}})(x)  &=& 4 {\rm{Ind}}_{\rm{C}}(x) \mbox{ for all }x \in {R}_{\rm{C}},\\ 
\div(\tilde \theta^*_{\rm{C}})(x) &=& 4 {\rm{Ind}}_{\rm{C}}(x) \mbox{ for all } x \in R_{\rm{C}}\setminus  \partial_{in} \mathring{R}_{\rm{C}},
\end{eqnarray}
for ${\rm{C}}\in \{{\rm{B}},\rm{R}\}$, where ${\rm{Ind}}_{\rm{C}}(\cdot)$ was defined in \eqref{not1} and \eqref{not2}.
Thus the flows $\tilde\theta_{\rm{C}}$ and $\tilde\theta^*_{\rm{C}}$ on $\mathring{R}^{out}_{\rm{C}}$ and $\mathring{R}^{in}_{\rm{C}}$ have the same divergence as $\nabla G_{{\rm{C}},n}$ and $\nabla G^*_{{\rm{C}},n}$ respectively.
Hence by Remark \ref{enermin1}
 $${\mathcal{E}}_{\rm{C}}\le \mathcal{E}(\tilde \theta_{\rm{C}}) \text{ and } {\mathcal{E}}^*_{\rm{C}}\le \mathcal{E}(\tilde \theta^*_{\rm{C}}).$$

Recall $C_{{\rm{B}},m}$ from \eqref{fillholes}. Since $\mathring{R}_{\rm{B}}=\U_{n}\setminus C_{{\rm{B}},m}$ is connected and 
$\{R_{\rm{R}}\bigcup A_{{\rm{R}},n}\}\subseteq C_{{\rm{B}},m},$ it follows by definition that $\mathring{R}_{\rm{R}}\subset C_{{\rm{B}},m}.$
Thus, the edge set of the graphs  $\mathring{R}^{in}_{\rm{B}},\mathring{R}^{out}_{\rm{R}}$ are disjoint. 
Therefore,
\begin{equation}\label{nonneg}
\mathcal{E}=\mathcal{E}(\theta)\ge \mathcal{E}(\theta\mid_{\mathring{R}^{in}_{\rm{B}}})+\mathcal{E}(\theta\mid_{\mathring{R}^{out}_{\rm{R}}})=\mathcal{E}(\tilde \theta^*_{\rm{B}})+\mathcal{E}(\tilde \theta_{\rm{R}})\ge {\mathcal{E}}^*_{\rm{B}}+ {\mathcal{E}}_{\rm{R}}.
\end{equation}
Hence we are done.
\end{proof} 
\begin{remark}\label{nonneg200}Notice that \eqref{nonneg} would not have been true if we worked with  $\mathring{R}^{out}_{\rm{B}}$ and $\mathring{R}^{out}_{\rm{R}}$, since they do not necessarily have disjoint edges. This is the reason for gluing the inner boundary in one of the sets. \end{remark}

To prove Lemma \ref{overlap}, we need a few preliminary lemmas first. Let
\begin{eqnarray}
\label{opt12} 
\theta_* & = &\nabla ({G}^{*}_{{\rm{B}},n}).
\end{eqnarray}
Hence by Remark \ref{enermin1}, ii., $\theta_*$ has the minimum energy among all flows on ${\mathring{R}^{in}_{\rm{B}}}$ with the same divergence as itself.
Even  though ${\mathring{R}^{in}_{\rm{B}}}$ is obtained from a subgraph of $\U_n$ by identifying some vertices, we  denote the edges of this graph by the corresponding edges of $\U_n.$  
The next lemma is a conservation of flow result. It says that the total flow across $\partial_{in} \mathring{R}_{\rm{B}}$ is at most a constant.
\begin{lem}\label{sumbound} For $\theta_*$ as above,
$$\sum_{{x\in\partial_{in} \mathring{R}_{\rm{B}}}}\sum_{ {
y\sim x}\atop {y \in {R}_{\rm{B}}\setminus \partial_{in} \mathring{R}_{\rm{B}}}}{\theta}_{*}(x,y)\le 4.
$$
\end{lem}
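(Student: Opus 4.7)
The plan is to deduce the bound from conservation of flow on the glued graph $\mathring{R}^{in}_1$, then discard a non-negative remainder. First I would compute $\div \theta_*$ at every non-glued vertex $x \in R_1 \setminus \partial_{in}\mathring{R}_1$: by \eqref{relation12} and \eqref{lapl3} this equals $d_x\, f_1(x)$, and since $f_1$ is supported on $\U_{1,n}$ — which for small $\dd$ lies deep in the interior of $\U$, where $d_x = 4$ — summing yields
$$\sum_{x \in R_1 \setminus \partial_{in}\mathring{R}_1} \div \theta_*(x) \;\le\; 4\sum_{x \in \U_{1,n}} f_1(x) \;=\; 4.$$
The vanishing of total divergence \eqref{divsum} then forces $|\div \theta_*(\mathfrak{v})| \le 4$, which by antisymmetry of $\theta_*$ means the total outgoing flow from $\mathfrak{v}$ across non-self-loop edges is at most $4$.

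Next I would split this outgoing flow according to whether the $\mathfrak{v}$-end of the edge lies in $\partial_{in}\mathring{R}_1$ or in the complementary piece $(\mathring{R}_1 \setminus R_1) \setminus \partial_{in}\mathring{R}_1$ of $\mathfrak{v}$. The first piece is exactly the sum appearing in the lemma, so it suffices to show that the second piece is non-negative. For an edge $(x,y)$ in this second piece, $x$ is glued so $G^*_{1,n}(x) = 0$ by \eqref{lapl4}, while the random walk representation \eqref{stoppedgreennew} combined with $f_1 \ge 0$ gives $G^*_{1,n}(y) \ge 0$. Therefore $\theta_*(x,y) = G^*_{1,n}(y) - G^*_{1,n}(x) \ge 0$, summing such edges is non-negative, and the claim follows.

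The main obstacle I anticipate is not analytic but combinatorial bookkeeping: carefully distinguishing the two kinds of vertices lumped inside $\mathfrak{v}$ — the honest inner boundary $\partial_{in}\mathring{R}_1$ versus the filled-in holes $\mathring{R}_1 \setminus R_1$ — and verifying that the source $\U_{1,n}$ sits strictly inside the non-glued region $R_1 \setminus \partial_{in}\mathring{R}_1$, so that the divergence computation does not leak mass onto $\mathfrak{v}$. Setup \ref{para} together with the inclusions $\U_{1,n} \subset B(y_1,\dd/4) \subset A_{1,n} \subset \mathring{R}_1$ from Remark \ref{connrem1}, plus $\dd \ll \dd_1 = \dd^{a_1}$, should handle this (and in fact the inequality above is robust to any leakage, since the right-hand side is already an upper bound, not an equality). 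Beyond this, the argument relies only on flow conservation and positivity of $G^*_{1,n}$.
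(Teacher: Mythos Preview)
Your proposal is correct and follows essentially the same route as the paper: total divergence vanishes, the divergence at the non-glued vertices sums to at most $4$ (since $f_1$ is supported on $\U_{1,n}$ where $d_x=4$), so the outgoing flow from $\mathfrak{v}$ is at most $4$; then the non-negativity of $G^*_{1,n}$ makes every individual term $\theta_*(x,y)\ge 0$, so the partial sum over $x\in\partial_{in}\mathring{R}_1$ is bounded by the full outgoing flow. If anything, you are slightly more explicit than the paper in separating the two kinds of glued vertices inside $\mathfrak{v}$ (the inner boundary versus the filled-in holes $\mathring{R}_1\setminus R_1$) and noting that the same positivity argument disposes of the latter contribution; the paper states non-negativity only for the terms appearing in the lemma's sum and then writes ``thus we are done,'' leaving that last step implicit.
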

\begin{proof}
We start with the observation that for all such $x,y$ as in the sum, $G^*_{{\rm{B}},n}(x)=0$ and $G^*_{{\rm{B}},n}(y)\ge 0$ which follows by definition. Hence ${\theta}_{*}(x,y)\ge 0$ for all such $(x,y)$. 
Now by \eqref{divsum}  we have 
 $\sum_{z \in \mathring{R}^{in}_{\rm{B}}} \div(\theta_*)(z)=0.$
 By \eqref{lapl3} for any $z \in \mathring{R}^{in}_{\rm{B}}$ such that $z \in R_{\rm{B}}\setminus \partial_{in}\mathring{R}_{\rm{B}}$ we have $\div(\theta_*)(z)=4{\rm{Ind}}_{\rm{B}}(z).$
 Note that even though the graph is not regular, the $4$ appears since every $z$ as above, in the support of ${\rm{Ind}}_{\rm{B}}(\cdot)$ has degree $4$. 
Moreover, by definition,  $$\sum_{z \in \mathring{R}^{in}_{\rm{B}}} \div(\theta_*)(z)=\sum_{z\in R_{\rm{B}}\setminus \partial_{in}\mathring{R}_{\rm{B}}}4{\rm{Ind}}_{\rm{B}}(z)+ \sum_{x\in\partial_{in}\mathring{R}_{\rm{B}}\bigcup (\U_n \setminus R_{\rm{B}})}\sum_{ {
y\sim x}\atop {y \in {R}_{\rm{B}} \setminus \partial_{in} \mathring{R}_{\rm{B}}}}{\theta}_{*}(y,x).$$ The LHS is $0$ and the first sum on the RHS is at most $4$. Since $\theta_*$ is anti-symmetric, and all the terms in the second sum above are non-positive, we are done .
\end{proof}

We state two more lemmas.  The first result says that for any connected subset $A$ of $\U_n$ with large enough diameter which is at a certain distance away from $\U_{{\rm{B}},n},$ the probability that a random walk started from a neighboring site of $A$ hits $\U_{{\rm{B}},n}$ before hitting $A$ decays as a power law in $n$. The result is well known on the whole lattice. The proof in our case with the  necessary adaptions is provided in \cite{tc1}. Let  ${\rm{diam}}(\cdot)$ denote the diameter in terms of the Euclidean metric.
\begin{lem}\label{hitprob2}\cite[Lemma 5.1]{tc1} Fix $c>0$. Let $A\subset \U_n$ be $*-$connected (Definition \ref{conndef1}) and suppose that $\min({\rm{diam}}(A),d(\U_{\mathrm{B},n},A))\ge c$.
Then for all large $n$,  $$\sup_{x\sim A}\mathbb{P}_{x}(\tau(\U_{{\mathrm{B}},n})\le \tau(A))\le \frac{C}{n^{\kappa}},$$ for some positive $\kappa,C$ depending only on $c$ and $\U.$ Here $x \sim A$ means that $x \notin A$ and there exists $y \in A$ such that $x$ is a neighbor of $y.$
\end{lem}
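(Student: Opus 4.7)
The plan is to establish this as a Beurling-type estimate, adapted to the reflected random walk on the smooth domain $\U_n$. The classical Beurling projection theorem for simple random walk on $\Z^2$ says that whenever $A$ is a $*$-connected set and $x$ lies at euclidean distance $r$ from $A$ with $\mathrm{diam}(A)\ge R$, the probability that SRW started at $x$ avoids $A$ before exiting $B(x,R/2)$ is at most $C(r/R)^{1/2}$. The $*$-connectivity is essential: it ensures that the radial projection of $A$ through $x$ is a genuine interval with no gaps, so comparison with the explicit one-dimensional hitting probability for that projected segment yields the power-law bound.

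First I would reduce the statement to such an exit-from-a-ball estimate. Since $d(\U_{1,n},A)\ge c$ and $x\sim A$ implies $d(x,A)\le 1/n$, one has $d(x,\U_{1,n})\ge c/2$ for all large $n$. Hence any trajectory hitting $\U_{1,n}$ before $A$ must first exit $B(x,c/4)$ without having hit $A$, so
\[
\mathbb{P}_x(\tau(\U_{1,n})\le \tau(A))\le \mathbb{P}_x\bigl(\tau(A)>\tau(\U_n\setminus B(x,c/4))\bigr),
\]
and it suffices to bound the right-hand side by $Cn^{-\beta}$.

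Next I would apply the classical Beurling estimate with $r=1/n$ and $R=c/4$. Since $A$ is $*$-connected and $\mathrm{diam}(A)\ge c$, either $A\cap B(x,c/4)$ already has diameter at least $c/4$, or else $A$ exits $B(x,c/4)$, in which case its $*$-connectivity forces it to meet every circle centered at $x$ of radius between $1/n$ and $c/4$. In either case the projection argument gives
\[
\mathbb{P}_x\bigl(\tau(A)>\tau(\U_n\setminus B(x,c/4))\bigr)\le C\bigl(1/(nc)\bigr)^{1/2},
\]
yielding $\beta=1/2$.

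The main obstacle is that the walk on $\U_n$ is reflected at $\partial \U$, so when $x$ is near the boundary the pure-lattice Beurling bound does not apply directly. Since $\U$ is smooth, the boundary is locally a half-plane up to curvature corrections, precisely the approximation used in \eqref{localhalf} and Remark \ref{graphcon123}. The plan is to couple the reflected walk on $\U_n\cap B(x,c/4)$ with reflected walk on a discretized tangent half-plane, then unfold by the reflection principle to an ordinary walk on $\Z^2$; the $*$-connected set $A$ lifts to the union of $A$ with its mirror image, which remains $*$-connected and of diameter at least $c$. Applying Beurling in the unfolded picture transfers the bound back to $\U_n$, with constants depending only on $c$ and $\U$. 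Making this coupling quantitative is the most delicate step, but the analyticity of $\partial\U$ makes the local half-plane approximation sufficiently uniform to preserve the exponent $\beta$.
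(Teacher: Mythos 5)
The paper itself does not prove this lemma; it cites \cite[Lemma 5.1]{tc} and remarks only that ``the result is standard on the whole lattice'' and the adaptations appear in the companion paper. So there is no in-paper proof to line your argument up against. Evaluating the proposal on its own terms: your reduction to an exit estimate is correct (since $x\sim A$ gives $d(x,A)=1/n$ and $d(\U_{1,n},A)\ge c$, any path hitting $\U_{1,n}$ before $A$ must first leave $B(x,c/4)$), and your use of $*$-connectivity plus $\mathrm{diam}(A)\ge c$ to force $A$ to cross every circle of radius between $1/n$ and $c/4$ around $x$ is exactly the right way to invoke the classical Beurling projection bound in the interior.

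The gap is in the boundary case, and it is not merely a matter of detail. You propose to couple the reflected walk on the fixed-scale ball $B(x,c/4)\cap\U_n$ with a reflected walk on a discretized tangent half-plane and then unfold by reflection. But $c$ is a fixed constant; over a ball of radius $c/4$ the analytic curve $\partial\U$ deviates from its tangent line by an amount $O(c^2)$, which is comparable to $c/4$ rather than to the lattice spacing $1/n$. The tangent-half-plane approximation is therefore not uniformly accurate at this scale, and the coupling cannot be made to agree up to $o(1)$ errors over the $\Theta(n^2)$ steps needed to exit the ball. A further difficulty you do not address: reflecting the lattice set $A$ across a curved boundary does not produce a lattice set, so the unfolded obstruction $A\cup A^{\mathrm{mirror}}$ is not a $*$-connected subset of $\frac1n\Z^2$ and the projection argument does not apply verbatim. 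A correct adaptation would either have to run the Beurling iteration at a shrinking, $n$-dependent scale $n^{-\gamma}$ with $\gamma>1/2$ (so that the boundary is within $o(1/n)$ of a straight line over each step of the iteration, at the cost of a smaller exponent $\beta$), or avoid the reflection trick altogether and argue directly from the reflected-walk heat kernel and Harnack-type estimates that underlie the rest of \cite{tc}, \cite{wtf}. As written, the half-plane coupling step would not go through.
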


The next result is a basic isoperimetric inequality saying that the boundary of the set $\mathring{R}_{\rm{B}}$ is not small. 
\begin{lem}\label{diamlb}There exists $c=c(\dd,\U)>0,$ such that $$\min ({\rm{diam}}(\partial_{in} \mathring{R}_{\rm{B}}), {\rm{diam}}(\partial_{out} \mathring{R}_{\rm{B}}))>c.$$ 
\end{lem}
\begin{proof} 
Clearly it suffices to just show that ${\rm{diam}}(\partial_{in} \mathring{R}_{\rm{B}})>c$ since every vertex in $\partial_{in} \mathring{R}_{\rm{B}}$ has a neighbor in $\partial_{out} \mathring{R}_{\rm{B}}.$
We  consider only the disc $\D,$ since the  general proof now follows by using the bi-Lipschitz nature of the conformal map $\phi$ in \eqref{confmap1}. 
However the proof for the disc follows by standard isoperimetric inequalities on the plane once we observe that, $\D_{{\rm{B}},n}\subset \mathring{R}_{\rm{B}}$ and  $\D_{{\rm{R}},n}\subset \D_n \setminus \mathring{R}_{\rm{B}}$ which imply that both $\mathring{R}_{\rm{B}}$ and its complement contain at least $\delta^{2}n^2$ vertices. We omit the details.
\end{proof}

To prove Lemma \ref{overlap}, the following will be our strategy: Without loss of generality, we take ${\rm{C}}={\rm{B}}$. Recall from  \eqref{opt12}, that $\theta_*$ is a flow on $\mathring{R}^{in}_{\rm{B}}$.  
We derive a flow $\theta_{\rm{B}}$ on $\mathring{R}^{out}_{\rm{B}}$  from $\theta_*,$ by  specifying the value of the flow on the edges from $\partial_{in} \mathring{R}_{\rm{B}}$ to $\partial_{out} \mathring{R}_{\rm{B}}$  (the extra edges in $\mathring{R}^{out}_{\rm{B}}$ not in $\mathring{R}^{in}_{\rm{B}}$) and keeping the value of the flow on all other edges same as $\theta_{*}.$ By construction
$\theta_{\rm{B}}$ will have the same divergence as $\nabla G_{{\rm{B}},n}$. Thus 
\begin{equation}\label{ineq22}
\mathcal{E}^*_{\rm{B}}\le\mathcal{E}_{\rm{B}}\le \mathcal{E}(\theta_{\rm{B}}),
\end{equation} where the first inequality is by Lemma \ref{mon1} and the second inequality by Remark \ref{enermin1} i.
We  then show that, $$\mathcal{E}(\theta_{\rm{B}})-\mathcal{E}^*_{\rm{B}}=O(\frac{1}{n^{\kappa}}),$$ for some $\kappa>0,$ which  completes the proof.
Before jumping into the proof of Lemma \ref{overlap}, we need  a crude upper bound for the stopped Green functions $G_{{\rm{C}},n}$. 
\begin{lem}\label{remupbound} For $\sigma\in \Omega\cup \Omega',$ for all small enough $\delta$ (depending on $\U$),
$$\limsup_{{n=2^m}\atop{m\to \infty}}\sup_{x\in \U_{{\mathrm{B}},n}}G_{{\rm{B}},n}(x)=O\left(\frac{1}{\dd^2}\right).$$
A similar result holds for $G_{2,n}$ as well. 
\end{lem}
A sharper upper bound of $O(\log(\frac{1}{\dd}))$ can be obtained with a little more work. However, since we do not try to obtain optimal dependence of our bounds on $\delta,$  this will suffice. 
\begin{proof}
Recall the definitions from \eqref{region} and $\sT_{({\rm{B}},\sigma)}$ from \eqref{stoptime}. Since $R_{\rm{B}} \subseteq S_{\rm{B}},$ it follows that,  $\sT_{({\rm{B}},\sigma)})\le \tau(\U_n \setminus S_{\rm{B}}). $ 
As a simple consequence of the fact that the total variation mixing time of the discrete time random walk on $\U_n$ is $O(n^2),$ (for a proof see \cite[Lemma 5.4]{tc1}) and the fact that both $\rm{S}_{\rm{B}}$ and $\rm{S}_{\rm{R}}$ have $\Theta(n^2)$ vertices,  it follows that,
$\sup_{x\in \U_n}\E(\tau (S_{{{\rm{C}}}}))=O(n^2)$  for ${\rm{C}}\in \{{\rm{B}},\rm{R}\}$ and hence for ${\rm{C}}\in \{{\rm{B}},\rm{R}\},$
$$\sup_{x\in \U_n,\sigma \in \Omega \cup \Omega'}\E(\sT_{({\rm{C}},\sigma)})=O(n^2).$$ Thus the lemma follows using the crude bound, $$G_{{\rm{B}},n}(x)\le \frac{1}{|\U_{{\rm{B}},n}|}\E_x(\sT_{({\rm{B}},\sigma)}).$$
\end{proof}

\begin{proof}[Proof of Lemma \ref{overlap}]
We  construct a flow $\theta_{\rm{B}}$ on $\mathring{R}^{out}_{\rm{B}}$:
For each $y$ in $R_{\rm{B}} \cap \partial_{in} \mathring{R}_{\rm{B}}$ choose $x \in \partial_{out} \mathring{R}_{\rm{B}}$ such that $y\sim x$ (such an $x$ exists by definition of $\partial_{in} \mathring{R}_{\rm{B}}$).
Let, 
\begin{align*}
\theta_{\rm{B}}(y,x)&=\sum_{\stackrel{z \in {R}_{\rm{B}}}{z\sim y}}\theta_{*}(z,y), \text{ for each } y \in R_{\rm{B}} \cap \partial_{in} \mathring{R}_{\rm{B}},\\
\theta_{\rm{B}} (y',x')&= 0\, \text{ for all other edges with } y' \text { in } \partial_{in} \mathring{R}_{\rm{B}}, x' \in \partial_{out} \mathring{R}_{\rm{B}},\\
&= \theta_{*} \text{ everywhere else}.
\end{align*}
We first claim that 
\begin{equation}\label{trivial}
\mathcal{E}_{\rm{B}}\le \mathcal{E}(\theta_{\rm{B}}). 
\end{equation}
By Remark \ref{enermin1} i. this will follow if we can show for all $y \in R_{\rm{B}}$
\begin{equation}\label{verify1}
\div(\theta_{\rm{B}})(y)=d_y(\Delta G_{{\rm{B}},n})(y)=4{\rm{Ind}}_{\rm{B}}(y),
\end{equation}
where $d_y$ is the degree of $y$ in $\mathring{R}_{\rm{B}}^{out}$.
We know by definition that 
$$\div(\theta_*)(y)=d_y(\Delta {G}^{*}_{{\rm{B}},n})(y)=4{\rm{Ind}}_{\rm{B}}(y) \mbox{ for all }y \in {R}_{\rm{B}}\setminus  \partial_{in} \mathring{R}_{\rm{B}}.$$
Also, by construction $\theta_{\rm{B}}=\theta_*$ on all the edges except the boundary edges of $ \partial_{in} \mathring{R}_{\rm{B}}.$ 
Now by \eqref{fillholes}
\begin{equation}\label{distcond}
d(\U_{{\rm{B}},n},\partial_{in} \mathring{R}_{\rm{B}})\ge \dd_1/4.
\end{equation}
 Thus,
 ${\rm{Ind}}_{\rm{B}}\mid_{\partial_{in} \mathring{R}_{\rm{B}}}=0$. Hence to verify \eqref{verify1}, it suffices to show that 
$\div(\theta_{\rm{B}})\mid_{\partial_{in} \mathring{R}_{\rm{B}}\cap R_{\rm{B}}}=0.$
Let $y\in \partial_{in}\mathring{R}_{\rm{B}}\cap R_{\rm{B}}.$
By construction, we know that there exists exactly one $x\in\partial_{out}\mathring{R}_{\rm{B}}$ such that $\theta_{\rm{B}}(y,x)\neq 0.$
Also,  $$\theta_{\rm{B}}(y,x)=\mathop{\sum_{z\in {R}_{\rm{B}}}}_{ z\sim y}\theta_{*}(z,y)=\mathop{\sum_{z\in {R}_{\rm{B}}}}_{ z\sim y}\theta_{\rm{B}}(z,y),$$ where the first equality is by definition and for the second inequality we use the fact that $\theta_{\rm{B}}=\theta_*,$ for all the edges in the sum by construction.
Thus, $$\div(\theta_{\rm{B}})(y)=\mathop{\sum_{w\in \mathring{R}^{out}_{\rm{B}}}}_{ w\sim y}\theta_{\rm{B}}(w,y)=0,$$ 
and \eqref{trivial} is verified. By \eqref{ineq22}, the proof of the lemma will be complete once we show that $$\mathcal{E}(\theta_{\rm{B}})-\mathcal{E}^*_{\rm{B}} = O(\frac{1}{n^{\kappa}}).$$
We claim that 
\begin{equation}\label{conclude}
 \sup_{y\in \partial_{in}\mathring{R}_{\rm{B}}}\sum_{{z\in R_{\rm{B}}}\atop{z\sim y}}\theta_{*}(y,z)=O(\frac{1}{n^{\kappa}}).
\end{equation} 
 First notice that $\partial_{in}\mathring{R}_{\rm{B}} $ satisfies the hypotheses of Lemma \ref{hitprob2}: 
\begin{itemize}
\item By \eqref{distcond}, $d(\partial_{in}\mathring{R}_{\rm{B}},\U_{\rm{B}})\ge \dd_1/4.$
\item The connectedness hypothesis is satisfied by Remark \ref{connrem}.
\item The diameter lower bound follows from Lemma \ref{diamlb}.
\end{itemize}
Thus by Lemma \ref{hitprob2} for any $y\in \partial_{in}\mathring{R}_{\rm{B}}$ and $z \sim y,$ $$\mathbb{P}_z(\tau(\U_{{\rm{B}},n})< \tau(\partial_{in}\mathring{R}_{\rm{B}}))=O \left( \frac{1}{n^{\kappa}}\right),$$ where the constant in the $O(\cdot)$ term depends on $\dd_1$ and $\U.$  
Since ${\rm{Ind}}_{\rm{B}}(\cdot)$ is positive only on $\U_{{\rm{B}},n}$ and $0$ everywhere else, by \eqref{stoppedgreennew} it follows that
\begin{eqnarray*}
G^*_{{\rm{B}},n}(z)&\le & \mathbb{P}_z \left(\tau(\U_{{\rm{B}},n})<  \tau(\partial_{in}\mathring{R}_{\rm{B}})\right)\sup_{x \in \U_{{\rm{B}},n}}G^*_{{\rm{B}},n}(x),\\
& \le & \mathbb{P}_z\left(\tau(\U_{{\rm{B}},n})< \tau(\partial_{in}\mathring{R}_{\rm{B}})\right)\sup_{x \in \U_{{\rm{B}},n}}G_{{\rm{B}},n}(x).
\end{eqnarray*}
The last inequality follows since for all $x \in \U_n,$  by \eqref{mon23}, $G^*_{{\rm{B}},n}(x)\le G_{{\rm{B}},n}(x).$
Thus by Lemma \ref{remupbound} we get
$$\theta_{*}(y,z)=G^*_{{\rm{B}},n}(z)= O(\frac{1}{n^{\kappa}}),$$
where the first equality follows from the fact that $\theta_*=\nabla G^*_{{\rm{B}},n}$ and $G^*_{{\rm{B}},n}(y)=0.$
Since $\theta_{\rm{B}}=\theta_*$ on all but the boundary edges we have
\begin{eqnarray*}
\mathcal{E}(\theta_{\rm{B}})-\mathcal{E}(\theta_*)&=&O(\sum_{ y\in \partial_{in}\mathring{R}_{\rm{B}}}\bigl(\sum_{z\sim y}|\theta_{*}(z,y)|\bigr)^{2})\\
&\le & \bigl(\sup_{y\in \partial_{in}\mathring{R}_{\rm{B}}}\sum_{z\sim y}|\theta_{*}(z,y)|\bigr)\bigl(\sum_{y\in \partial_{in}\mathring{R}_{\rm{B}}}\sum_{z\sim y}|\theta_{*}(z,y)|\bigr)\\
& = & O(\frac{1}{n^\kappa}),
\end{eqnarray*}
where the last equality is due to Lemma \ref{sumbound} and \eqref{conclude}. 
Hence ${\mathcal{E}}_{\rm{B}}- \mathcal{E}^{*}_{\rm{B}}\stackrel{ \eqref{trivial}}{\le} \mathcal{E}(\theta_{\rm{B}})-\mathcal{E}(\theta_*)=O(\frac{1}{n^{\kappa}}).$
\end{proof}

\subsection{Quantitative version of Lemma \ref{res1}}\label{quant102}
The proof of Theorem \ref{negdrift123} demands the following stronger quantitative version of Lemma \ref{weak201} or equivalently  Lemma \ref{res1}. 

\begin{lem}\label{resquant1}For all $\sigma\notin \Omega_{(\e)},$ satisfying Assumption \ref{ass2}, 
$$\mathcal{E}-\mathcal{E}^*_{\rm{B}}-\mathcal{E}_{\rm{R}}\ge c,$$ for some $c=c(\e,\U)>0.$ 
\end{lem} 
The proof of Theorem \ref{negdrift123} is now immediate.
\begin{proof}[Proof of Theorem \ref{negdrift123}]
Under Assumption \ref{ass2}, the proof follows from Lemma \ref{energyint}, \eqref{loss} and  \eqref{resquant1}. The remaining cases were proved in Section \ref{efn}.
\end{proof}

We now prepare for the proof of Lemma \ref{resquant1}.
Let $\overset{wired}{\U_n}$ be the same graph as $\U_n$ but with the following set of vertices: 
\begin{equation}\label{glueset1} 
\partial_{in}\mathring{R}_{\rm{B}} \cup \{ x \in{\mathring{R}_{\rm{B}}} :\sigma(x)={\rm{R}}\} \cup \{ x \in{\U_n \setminus \mathring{R}_{\rm{B}}} :\sigma(x)={\rm{B}}\},
\end{equation}
glued. The points that are glued are all the red vertices in $\mathring{R}_{\rm{B}}$ (which cause the blue random walk to stop) and similarly all the blue vertices in $\U_n \setminus \mathring{R}_{\rm{B}}$.
Let $\mathfrak{w}$ denote the set of glued vertices which act as a single vertex in  $\overset{wired}{\U_n}$.
In the proof of Lemma \ref{res1} we took the flow $\nabla G_n$ on $\U_n$ and restricted it to $\mathring{R}^{in}_{\rm{B}}$ and $\mathring{R}^{out}_{\rm{R}}.$ To prove Lemma \ref{resquant1}, instead of restricting $\nabla G_n,$ we  construct another flow $\overset{wired}{\theta}$ on $\overset{wired}{\U_n}$ and then restrict it.  As a consequence, similar to Lemma \ref{res1} it will follow that $\cE(\overset{wired}{\theta})-\mathcal{E}^*_{\rm{B}}-\mathcal{E}_{\rm{R}}\ge0.$ Moreover, due to a  quantitative version of Rayleigh's monotonicity principle which is proved in Lemmas \ref{gain1} and \ref{disjointpath23}, it will follow that $\cE(\nabla G_n)-\cE(\overset{wired}{\theta})>c$.  Combining the above two, we would be done.
To proceed, recall the functions ${\rm{Ind}},{\rm{Ind}_{\rm{B}}},$ and ${\rm{Ind}_{\rm{R}}}$ from \eqref{not3}.
\begin{lem}\label{unimini}There exists a unique flow $\overset{wired}{\theta}$ on $\overset{wired}{\U_n},$ with the following properties:\\
\begin{align*}
\div(\overset{wired}{\theta})\mid_{\{R_{\rm{B}}\cup R_{\rm{R}}\}\setminus \mathfrak{w}} & =  4{\rm{Ind}}\mid_{\{R_{\rm{B}}\cup R_{\rm{R}}\}\setminus \mathfrak{w}},\\
\div (\overset{wired}{\theta})(\cdot) & =  0 \mbox{ for all other vertices except }  \mathfrak{w},\\
\overset{wired}{\theta}&=\argmin_{g} \mathcal{E}(g),
\end{align*}
 where in the last display the infimum is taken over all flows $g$ on $\overset{wired}{\U_n}$ which have the same divergence as $\overset{wired}{\theta}$.
\end{lem}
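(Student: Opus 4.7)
\medskip

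The plan is to realize $\overset{wired}{\theta}$ as the gradient of a discrete potential $F$ on $\overset{wired}{\U_n}$, where the glued vertex $\mathfrak{w}$ plays the role of a grounded boundary. Concretely, I would seek $F : \overset{wired}{\U_n} \to \R$ with $F(\mathfrak{w}) = 0$ and with the prescribed source distribution in its Laplacian: using the identity $\div(\nabla F)(x) = d_x \Delta F(x)$ from \eqref{relation12}, the required divergence condition on $\overset{wired}{\theta} = \nabla F$ translates into
\[
d_x \Delta F(x) = 4 f(x) \mathbf{1}(x \in \{R_1 \cup R_2\} \setminus \mathfrak{w}), \qquad x \neq \mathfrak{w}.
\]
Since $f$ is supported on $\U_{1,n} \cup \U_{2,n}$ where every vertex has degree $4$, and since the red points of $\U_{1,n}$ and blue points of $\U_{2,n}$ lie in the glued set (they are absorbed into $\mathfrak{w}$ by definition \eqref{glueset1}), this is a consistent Dirichlet problem on the connected graph $\overset{wired}{\U_n}$ with a single boundary vertex $\mathfrak{w}$.

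The existence and uniqueness of $F$ follow from standard finite-graph potential theory: the Dirichlet Laplacian on $\overset{wired}{\U_n}\setminus\{\mathfrak{w}\}$ is invertible (equivalently, random walk on $\overset{wired}{\U_n}$ hits $\mathfrak{w}$ almost surely, since the quotient of the connected graph $\U_n$ is still connected). Explicitly one can write
\[
F(x) \;=\; \E_x\!\!\!\sum_{k=0}^{\tau(\mathfrak{w})-1} \tfrac{4}{d_{X_k}}f(X_k),
\]
which is the Green function on $\overset{wired}{\U_n}$ with source $f$ and sink $\mathfrak{w}$. Setting $\overset{wired}{\theta} := \nabla F$, property (i) at the non-$\mathfrak{w}$ vertices holds by construction; at $\mathfrak{w}$ the divergence is automatically fixed to be the negative of the sum of the others by the antisymmetry identity \eqref{divsum}, so no compatibility condition is needed.

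For property (ii) and uniqueness, I would repeat the Thomson-principle argument already used in Lemma~\ref{var}. First, existence of \emph{some} flow with the required divergence is clear ($\nabla F$ itself works, or one may route unit currents along paths). The energy $\mathcal{E}$ is a strictly convex quadratic functional on the affine space $\mathcal{A}$ of such flows, so it admits a unique minimizer $\theta^\star$. One checks that $\theta^\star$ satisfies the cycle law (otherwise one could cancel flow around a cycle and strictly lower the energy), and that any gradient flow like $\nabla F$ automatically satisfies the cycle law by telescoping. Since a flow on a finite connected graph is determined by its divergences together with the cycle law, $\theta^\star = \nabla F = \overset{wired}{\theta}$, giving both uniqueness and minimality at once.

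The argument is standard potential theory once the quotient graph $\overset{wired}{\U_n}$ is set up correctly, so I do not anticipate a substantive obstacle; the only point requiring a little care is checking that the glued set \eqref{glueset1} is disjoint from the support of $f$ (so that placing all source terms on non-$\mathfrak{w}$ vertices is well-defined), which follows from the distance condition \eqref{distcond} together with Remark~\ref{interprop}.
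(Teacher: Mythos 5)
Your proof is correct and takes essentially the same route as the paper's, which is even shorter: the paper simply observes that the restriction $\nabla G_n \mid_{\overset{wired}{\U_n}}$ already lies in the admissible set $\mathcal{F}$ (so $\mathcal{F}\neq\emptyset$) and then invokes the standard compactness/strict-convexity argument from \cite[Theorem 9.10]{lpw} for the existence of a unique energy minimizer. You go a step further by constructing the minimizer explicitly as the gradient of a wired Green function and then verifying it satisfies the cycle law, which is a more constructive way of carrying out the same Thomson-principle argument; both are correct, and your care in checking that the glued set is disjoint from $\mathrm{supp}(f)$ and that the quotient graph remains connected is a welcome bit of bookkeeping that the paper leaves implicit.
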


Note that using the above data and \eqref{divsum}, $\div (\overset{wired}{\theta})(\mathfrak{w})$ is determined exactly.

\begin{proof} For brevity, let us call  the set of flows $g$ on $\overset{wired}{\U_n}$ satisfying the above divergence conditions as $\mathcal{F}.$ That the set is non empty follows, since  by \eqref{reword} the flow $\nabla(G_n)\mid_{ \overset{wired}{\U_n}}$ is in this set. 
Now let
\begin{equation}\label{minenergy}
\overset{wired}{\mathcal{E}}:=\inf_{g \in \mathcal{F}} \mathcal{E}(g).
\end{equation}
By standard arguments using compactness (see proof of \cite[Theorem 9.10]{lpw}), there exists a unique minimizer $\overset{wired}\theta$ of the energy minimization problem.
\end{proof}

Under Assumption \ref{ass2}, the restrictions
\begin{eqnarray*}
  \overset{wired}{\theta_{\rm{B}}} &= & \overset{wired}{\theta}\mid_{\mathring{R}^{in}_{\rm{B}}},\\
  \overset{wired}{\theta_{\rm{R}}} &= & \overset{wired}{\theta}\mid_{\mathring{R}^{out}_{\rm{R}}},
\end{eqnarray*}  
 satisfy the following divergence conditions:
\begin{eqnarray}\label{resdiv}
  \div(\overset{wired}{\theta_{\rm{B}}})(x) &= & 4{\rm{Ind}}_{\rm{B}}(x) \text{ for all }x\in R_{\rm{B}}\setminus \partial_{in}\mathring{R}_{\rm{B}}, \\
  \nonumber
  \div(\overset{wired}{\theta_{\rm{R}}})(y) &= & 4{\rm{Ind}}_{\rm{R}}(y) \text{ for all }y\in R_{\rm{R}} . 
\end{eqnarray}  
To see this,  note that no vertex in either $R_{\rm{B}}\setminus \partial_{in}\mathring{R}_{\rm{B}}$ or $R_{\rm{R}}$ was in the set of glued vertices in \eqref{glueset1}.
Now by Remark \ref{interprop}, for ${\rm{C}}\in \{{\rm{B}},\rm{R}\},$ we have ${\rm{Ind}}(\cdot)\mid_{ R_{\rm{C}}}={\rm{Ind}}_{\rm{C}}(\cdot).$ Thus \eqref{resdiv} follows, since $\overset{wired}{\theta}$ by definition, satisfies the divergence conditions.

Moreover by Remark \ref{enermin1} we have  ${\mathcal{E}}^*_{\rm{B}}\le \mathcal{E}(\overset{wired}{\theta_{\rm{B}}}),\text{ and } {\mathcal{E}}_{\rm{R}}\le  \mathcal{E}(\overset{wired}{\theta_{\rm{R}}}).$

\begin{remark}\label{rem4}
Similar to \eqref{nonneg} we have, 
$$
\overset{wired}{\mathcal{E}}=\mathcal{E}(\overset{wired}{\theta})\ge \mathcal{E}(\overset{wired}{\theta}\mid_{\mathring{R}^{in}_{\rm{B}}})+ \mathcal{E}(\overset{wired}{\theta}\mid_{\mathring{R}^{out}_{\rm{R}}})=\mathcal{E}(\overset{wired}{\theta_{\rm{B}}})+\mathcal{E}(\overset{wired}{\theta_{\rm{R}}})\ge {\mathcal{E}}^*_{\rm{B}}+ {\mathcal{E}}_{\rm{R}}.
$$
Therefore, 
\begin{eqnarray*}
\mathcal{E}-{\mathcal{E}}_{\rm{B}}-\mathcal{E}_{\rm{R}}  \ge  \mathcal{E}-{\mathcal{E}}^*_{\rm{B}}-\mathcal{E}_{\rm{R}}-O(\frac{1}{n^{\kappa}})& = & \mathcal{E}- \overset{wired}{\mathcal{E}}+\left[\overset{wired}{\mathcal{E}}-\mathcal{E}_{\rm{B}}^* -{\mathcal{E}}_{\rm{R}}\right]-\frac{C}{n^{\kappa}},\\
&\ge & \mathcal{E}- \overset{wired}{\mathcal{E}}-\frac{C}{n^{\kappa}} ,
\end{eqnarray*}
where the first inequality follows from \eqref{loss}. 
Hence to prove Lemma  \ref{resquant1} by \eqref{abbrev4}  it suffices to show that 
\begin{equation}\label{proofstra1}
\mathcal{E}- \overset{wired}{\mathcal{E}}\ge c,
\end{equation}
for some constant $c=c(\e).$ 
\end{remark} 

Notice that $\mathcal{E}- \overset{wired}{\mathcal{E}}$ is nothing but the drop in voltage when points at various voltages are glued together to get $\overset{wired}{\U_n}$ from $\U_n$. The fact that the quantity is positive is the well known \emph{Rayleigh's  monotonicity principle}. For more details see \cite[Chap 2]{Lp1}. We now make it quantitative. To estimate such voltage drops we state and prove two technical lemmas. 
Let $\tilde\U_n$ be a graph obtained from $\U_n$ by gluing certain pairs of points. 
 Let $A$ be a set of $k$ pairs of points, $(z_1,z'_1),\ldots (z_k,z'_k),$ which are among the pairs glued to obtain $\tilde \U_n$. 
Recall from \eqref{optimal1}, that $\theta=\nabla G_n.$ Consider the restricted flow $\nabla G_n \mid_{\tilde \U_n}$ (the flow that $\nabla G_n$ induces on $\tilde \U_{n}$). 
Let $\tilde \theta$ denote the flow on $\tilde \U_{n}$ such that 
\begin{equation}\label{energymini}
\mathcal{E}(\tilde\theta)=\inf_{g}\{\mathcal{E}{(g)}\},
\end{equation}
 where the infimum is taken over all flows $g$ on $\tilde \U_n$ which has the same divergence as $\theta\mid_{\tilde \U_n}.$ Existence of $\tilde \theta $ follows by the argument already used in the proof of Lemma \ref{unimini}.

\begin{lem}\label{gain1} Let $A\subset \U_n,\,\, \theta$  and $\tilde \theta$ be as defined above.  Suppose that, for each $1\le i\le k,$  there exists simple paths in $\U_n$ joining $(z_i,z'_i)$ of length $d_i$, and let $D$ be the maximum number of those paths that intersect any edge, then
$$\mathcal{E}(\tilde \theta)\le \mathcal{E}(\theta)- \frac{[\sum_{i=1}^{k}G_n(z_i)-G_n(z'_i)]^2}{D^2\sum_{i=1}^{k} d_i}.$$ 
\end{lem}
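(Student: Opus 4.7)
The plan is to exhibit a test flow $\theta'$ on $\tilde\U_n$ that has the same divergence as $\theta\mid_{\tilde\U_n}$ but smaller energy, and then invoke the minimum-energy characterization \eqref{energymini} of $\tilde\theta$ to conclude. The test flow will be of the form $\theta' = \theta\mid_{\tilde\U_n} + c\,\chi$ for a suitable cycle flow $\chi$ built by pushing unit flows along the paths $P_i$ and exploiting that each pair $(z_i,z_i')$ has been glued.

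First I would introduce, for each $1\le i\le k$, the unit flow $\chi_i$ on $\U_n$ supported on $P_i$ and oriented from $z_i'$ to $z_i$, and set $\chi := \sum_{i=1}^k \chi_i$. In $\U_n$, $\chi_i$ is a path-flow with divergence $-1$ at $z_i'$ and $+1$ at $z_i$ and zero elsewhere. In $\tilde\U_n$, however, the vertices $z_i$ and $z_i'$ are identified, so the two endpoint contributions cancel and $\chi_i$ becomes a cycle flow; consequently $\div\chi\equiv 0$ on $\tilde\U_n$. Hence for every $c\in\R$ the flow $\theta + c\chi$ has the same divergence on $\tilde\U_n$ as $\theta\mid_{\tilde\U_n}$, and \eqref{energymini} yields
\begin{equation*}
\mathcal{E}(\tilde\theta)\ \le\ \mathcal{E}(\theta + c\chi).
\end{equation*}

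Next I would expand using bilinearity of the quadratic form $\mathcal{E}$,
\begin{equation*}
\mathcal{E}(\theta + c\chi)\ =\ \mathcal{E}(\theta) + 2c\,\langle \theta,\chi\rangle + c^2\,\mathcal{E}(\chi),
\end{equation*}
and estimate each piece. Since $\theta=\nabla G_n$, summation by parts along each $P_i$ telescopes to $\langle\nabla G_n,\chi_i\rangle = G_n(z_i)-G_n(z_i')$, so with $S := \sum_{i=1}^k [G_n(z_i)-G_n(z_i')]$ we obtain $\langle\theta,\chi\rangle = S$. For $\mathcal{E}(\chi)$, note that $\chi$ is supported on the union $\bigcup_i P_i$, which contains at most $\sum_i d_i$ edges; on any single edge $e$ at most $D$ of the paths $P_i$ contribute, so $|\chi(e)|\le D$. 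This yields the crude but sufficient bound $\mathcal{E}(\chi)\le D^2\sum_i d_i$.

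Combining, $\mathcal{E}(\tilde\theta) \le \mathcal{E}(\theta) + 2cS + c^2 D^2\sum_i d_i$ for every $c\in\R$; minimizing the right-hand side in $c$ at $c = -S/(D^2\sum_i d_i)$ produces the claimed
\begin{equation*}
\mathcal{E}(\tilde\theta)\ \le\ \mathcal{E}(\theta)\ -\ \frac{S^2}{D^2\sum_i d_i}.
\end{equation*}
The argument is a standard Dirichlet-type variational principle paired with a Cauchy--Schwarz energy bound; the only subtle point to verify carefully is that identifying each pair $(z_i,z_i')$ in $\tilde\U_n$ genuinely turns the path-flow $\chi_i$ into a cycle flow of zero divergence (so that $\theta + c\chi$ lies in the admissible class minimized in \eqref{energymini}), together with harmless bookkeeping for intermediate vertices of $P_i$ that may also be glued or for edges that become self-loops in $\tilde\U_n$.
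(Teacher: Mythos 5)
Your argument is correct and essentially identical to the paper's: both construct the test flow by superposing onto $\theta=\nabla G_n$ a multiple of path-flows along the $P_i$, note that gluing turns these into divergence-free cycle flows on $\tilde\U_n$, expand the quadratic energy (telescoping the cross term, bounding the cycle-flow energy by $D^2\sum d_i$), and optimize the flow strength. The paper phrases it as building $\theta_A=\theta+\beta\chi$ on $\U_n$ and restricting, while you work directly with $\theta+c\chi$ as a flow on $\tilde\U_n$, but this is a cosmetic difference.
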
 

Thus the lemma measures the amount of voltage drop when points at different potentials are glued together.

\begin{proof} Without loss of generality assume for all $i, $ that  $G_n(z_i)\ge G_n(z'_i).$ Take a path joining $z'_i$ to $z_i$ of length $d_i,$ and think of it as oriented towards  $z_i.$  Since on $\tilde \U_{n}$ the points $z_i$ and $z'_i$ are glued, the oriented path becomes a directed cycle. Let us denote it by $\overset{\rightarrow}{\cP_i}.$  Also, let $\overset{\leftarrow} {\cP_i}$ be the same cycle in the reverse orientation. We first create a new flow ${\theta}_{A}$ on $\U_n$ by sending an additional amount of flow $p$ along $\overset{\rightarrow}{\cP_i}$ for all $i=1,\ldots,k.$  Thus,  $${\theta}_{A}(\vec{e})=\theta(\vec{e}) + \sum_{i=1}^{k}p \mathbf{1}(\vec{e}\in\overset{\rightarrow}{ \cP_i})+\sum_{j=1}^{k}-p \mathbf{1}(\vec{e}\in\overset{\leftarrow} {\cP_j})$$ for all directed edges $\vec{e}$, where every undirected edge in $\U_n$ appears with both orientations.
Now the additional flow along an edge is at most $Dp$ since by hypothesis every edge is in at most $D$ many cycles. 
An easy computation by expanding the squares now shows that,  $$\mathcal{E}({\theta}_{A})\le \mathcal{E}(\theta)+2p \sum_{i=1}^{k}(G_n(z_i)-G_n(z'_i))+ D^2 p^2\sum_{i}^{k} d_i. $$ 
Optimizing the RHS over $p$ we see that the corresponding $\theta_A$ satisfies, 
$$\mathcal{E}({\theta}_{A})\le \mathcal{E}(\theta)- \frac{[\sum_{i=1}^{k}(G_n(z_i)-G_n(z'_i))]^2}{D^2\sum_{i}^{k} d_i} .$$
Notice that since all the paths $\overset{\rightarrow} {\cP_i} $ become cycles in $\tilde{\U}_n,$ and sending  additional mass along cycles does not change the divergence of the flow,  $\div(\theta_{A}\mid _{\tilde \U_n})=\div(\theta \mid_{\tilde \U_n}),$ (note that $\theta_A$ and $\theta$ do not have the same divergence on $\U_n$).
Hence by \eqref{energymini}, we have $\mathcal{E}(\tilde \theta)\le \mathcal {E}(\theta_{A}\mid _{\tilde \U_n})\le \mathcal{E}({\theta}_{A})$
and the proof is complete.
\end{proof}

By the above discussion and \eqref{proofstra1}, to use Lemma \ref{gain1} in the proof of Lemma \ref{resquant1}, one has to estimate the $d_i$'s and $D$ appearing when the glued set is given by \eqref{glueset1}.
In the next lemma we provide such estimates, which are not optimal but would suffice for our purposes. 
Recall the definition of the sets $\U^{(i),n}$ from \eqref{newnot1}.
\begin{lem}\label{disjointpath23} Given a  domain $\U,$ for any small enough $d,\e>0$ (depending on $\U$), there exists $C,a>0$ such that for all $\dd< \dd_0(d)$ and all $n=2^{m}>N(\dd)$ the following is true:
 For any set $A\subset \U_n$ containing $d n$ points, each of which is in a distinct $\U^{(i),n}$, and all of which are at Euclidean distance of at least $\e$ from the sources $\{x_{\rm{B}},x_{\rm{R}}\}$,
 there exists at least $\frac{d n}{20}$  disjoint pairs $(z_1,z'_1),\ldots (z_{\frac{d n}{20}},z'_{\frac{d n}{20}})$ of elements of $A$  such that: 
\begin{itemize}
\item [i.]  $|G_{n}(z_j)-G_{n}(z'_j)| > a$ for $j=1, \ldots \frac{d n}{20},$ 
\item [ii.]
There exist simple paths in  the graph $\U_n$ between $z_j$ and $z'_j$ of length at most $Cn$, such that no edge in $\U_n$ intersects  more than one path.
\end{itemize}
\end{lem}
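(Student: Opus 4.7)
The plan is to work in ``strip coordinates.'' Let $\Psi(z) := \log\tfrac{\psi(z)-i}{\psi(z)+i}$, a conformal map from $\U\setminus\{x_1,x_2\}$ onto the horizontal strip $\Pi := \{s = u+iv : v \in (\pi/2, 3\pi/2)\}$; it sends each geodesic $\gamma_{a_i}$ to the vertical line $\{u = \pi a_i/64\}$ and hence each shell $\U^{(i)}$ to a vertical slab of $u$-width $\pi(a_{i+1}-a_i)/64 = \Omega(1/n)$ (by \eqref{order6}, uniformly in $i$; the gaps only grow near $u = \pm\infty$, i.e., near $x_1,x_2$). By the smoothness of $\U$ (Section \ref{commen1}, i.), $\Psi$ is bi-Lipschitz on compact subsets away from $x_1,x_2$. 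Fix $U = U(\U,d)$ large enough that the number of shells lying outside the sub-strip $\{|u| \le U\}$ is at most $dn/10$; this forces $U \asymp \log(1/d)$, and the explicit constants can be chosen compatible with the bi-Lipschitz bounds used below (the Lipschitz constant of $\Psi$ grows like $e^U \sim 1/d$ while our corridor budget also scales like $1/d$). Discard from $A$ the (at most $dn/10$) points in those ``tail'' shells and relabel the remaining bulk shell indices meeting $A$ as $i_1 < i_2 < \cdots < i_N$ with $N \ge 9dn/10$.

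For \textbf{(i)}, pair an arbitrary $z_j \in A \cap \U^{(i_j)}$ with an arbitrary $z'_j \in A \cap \U^{(i_{j+\lfloor N/2\rfloor})}$ for each $j = 1,\ldots,\lfloor N/2\rfloor$. Telescoping the uniform bound $a_{m+1}-a_m = \Omega(1/n)$ across the $\ge 9dn/20$ intermediate (bulk) shells yields
$$a_{i_{j+\lfloor N/2\rfloor}} - a_{i_j} \;\ge\; \lfloor N/2\rfloor \cdot \Omega(1/n) \;\ge\; \Omega(d)$$
for every pair. Theorem \ref{convergence} together with Lemma \ref{unifcon} (applied in the bulk, where $|\psi(z) \pm i|$ is bounded below) then upgrades this to $|G_n(z_j) - G_n(z'_j)| \ge a$ for some $a = a(\U,d) > 0$, uniformly in $\dd < \dd_0(d)$ and $n = 2^m > N(\dd)$. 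This supplies the $\lfloor N/2\rfloor \ge dn/10$ pairs required for (i).

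For \textbf{(ii)}, route the $j$-th pair through an ``L-shape'' in $\Pi$: from $(u_j,v_j)$ vertically to $(u_j,v^{\mathrm{corr}}_j)$, then horizontally to $(u'_j,v^{\mathrm{corr}}_j)$, then vertically to $(u'_j,v'_j)$. Choose the corridor heights $v^{\mathrm{corr}}_j$ pairwise separated by at least $K/n$ in $v$, with $K$ large enough that pulling back by $\Psi^{-1}$ on the bulk sub-strip $\{|u|\le U\}$ converts $K/n$-separation into several lattice spacings in $\U_n$; since the Lipschitz constant of $\Psi$ on $\{|u|\le U\}$ is $O(e^U) = O(1/d)$ and we have budget $\lfloor N/2\rfloor \cdot K/n < \pi$, the choice $K \asymp 1/d$ works once $d < d_0(\U)$. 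The vertical pieces sit on the $N$ distinct lines $\{u = u_j\},\{u = u'_j\}$ (distinct because the shells are distinct), mutually separated by $\Omega(1/n)$ in $u$; the horizontal pieces sit on the $\lfloor N/2\rfloor$ distinct lines $\{v = v^{\mathrm{corr}}_j\}$. After pulling back via $\Psi^{-1}$ and approximating each continuous curve by a nearest-neighbor lattice path in $\U_n$, we obtain edge-disjoint lattice paths, so in particular no edge appears in more than two paths. Each continuous L has strip length at most $\pi + 2U$, and the Lipschitz bound on $\Psi^{-1}$ then yields lattice length at most $Dn$ with $D = D(\U,d)$ independent of $\dd$. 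The main technical obstacle is this final discretization step: verifying that continuous curves in the bulk strip separated by $\Omega(1/n)$ really do lift to genuinely edge-disjoint lattice paths in $\U_n$, which is handled by the bulk confinement (choice of $U$) together with a sufficiently large choice of $K$ absorbing the bi-Lipschitz distortion on $\{|u|\le U\}$.
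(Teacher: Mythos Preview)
Your strip-coordinate approach is a legitimate alternative to the paper's argument, which works directly in $\U_n$: the paper first pushes each $z_i$ away from $\partial\U$ along a short path confined to $\U^{(i-1)}\cup\U^{(i)}\cup\U^{(i+1)}$, thins to non-consecutive shells so these short paths are disjoint, and then pairs endpoints from shells at least $d/10$ apart, asserting (with details omitted) that such pairs admit almost-disjoint lattice connectors. Your routing via L-shapes in $\Pi$ is more explicit about how the connectors are actually built, at the cost of having to control the bi-Lipschitz distortion of $\Psi$ on the bulk.

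Two points need repair. First, the claim that the vertical pieces are ``mutually separated by $\Omega(1/n)$ in $u$'' does not follow merely from the shells being distinct: $z_j$ near the right edge of $\U^{(i_j)}$ and $z_{j'}$ near the left edge of the adjacent shell $\U^{(i_j+1)}$ can have $|u_j-u_{j'}|$ arbitrarily small. The fix is the same one the paper uses implicitly---thin to every second (or third) shell, so that consecutive used shells are separated by a full shell, forcing $|u_j-u_{j'}|=\Omega(1/n)$; you still retain $\ge dn/10$ pairs. Second, ``edge-disjoint'' is too strong: your L-shapes genuinely cross in the continuum (the vertical of one meets the horizontal of another), and after discretization the two paths will typically share an edge near each crossing. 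What your separation structure does give, once the thinning above is in place and $K$ is chosen large enough to beat the Lipschitz constant of $\Psi$ on $\{|u|\le U\}$, is that any edge of $\U_n$ lies within $O(1/n)$ of at most one vertical line and at most one horizontal corridor, hence belongs to at most two of your discretized paths---which is exactly what the lemma asks for. With these two adjustments your argument goes through.
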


\begin{remark}\label{suff1} It will be useful later to note that given $\e>0$, any set $A\subset \U_n$ with $|A|\ge cn^2$ for some constant $c$ (independent of $n$) much larger than $\e$  satisfies the hypothesis of the above lemma for a certain value of $d=d(c,\U).$ 
\end{remark}

\begin{proof}[Proof of Lemma \ref{disjointpath23}]
We first prove the second part.  Note that the statement is not difficult to prove, when the underlying graph is a rectangle and  the shells $\U^{(i),n}$ are horizontal strips (see for  e.g., \cite[Sec 4.3]{cyl}). 
A similar argument works on a general domain but there are certain subtle issues one needs to be careful about which we point out below.
 Recall from \eqref{newnot1}, that all the $\U^{(i),n}$'s have width  $\frac{100}{n}.$ Now, let $z_i \in A \cap \U^{(i),n}. $  
On $\U_n,$ the only source of concern could be if the points lie close to the boundary where the graph does not look like $\Z^2.$ Note that by the smoothness assumption on  the boundary of $\U,$  the domain near the boundary  looks like a half plane (see \eqref{localhalf}) and hence is similar to the situation in a rectangle.
Hence, in that case for each  such $z_i$ one can take a path in $\U_n$ starting from $z_i$ ending at say $y_i,$ so that the path  lies entirely in $\U^{(i-1),n} \cup \U^{(i),n} \cup \U^{(i+1),n},$ and $y_i$ lies away from the boundary.  The construction on the rectangle can now be applied. 
To avoid intersection of these paths we only consider a subset of the  points $z_{i},$ such that the shells $\U^{(i),n}$ containing them, are separated from each other by at least three shells. 
The proof is now completed by pairing the points $y_{i}$ into pairs $(y_j,y'_j)$ such that corresponding  $\U^{(i),n}$'s for the elements of a pair are at least $\frac{d}{20}$ apart, (see Figure \ref{f.connpath1} for an illustration). 

Now  by arguments similar to the case of a rectangle one can see that  the elements of each such pair  $(y_j,y'_j)$ can be joined using disjoint lattice paths of length at most $Cn,$ for some $C$ depending only on $\U$. Thus, the path between $z_j$ and $z_j'$ is obtained by concatenating the  paths from $z_j$ to $y_j$,  $y_j$ to $y_j',$ and  $y_j'$ to $z_j'$. 
\begin{figure}[hbt]
\centering
\includegraphics[scale=.8]{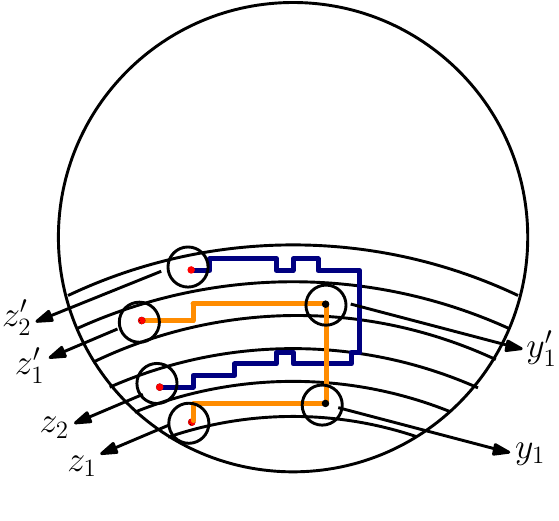}
\caption{Illustrating the proof of Lemma \ref{disjointpath23} in the case of the disc where points in different $\D^{(i),n}$ are connected by edge disjoint paths. To avoid cluttering in the figure, we do not draw too many shells, and hence do not follow the above prescribed rule of ignoring at least three shells between points.} 
\label{f.connpath1}
\end{figure}

For the first part, observe that by the above construction for any pair $z_j$ and $z'_j$ the corresponding $\U^{(i),n}$'s are at least $\frac{d}{20}$ apart.  Thus,  by Theorem \ref{convergence} and Lemma \ref{unifcon}, there exists $\dd_0$, such that given $\dd< \dd_0,$
$$\liminf_{{n=2^m}\atop{ m \to \infty}}[\inf_{j}(G_{n}(z_j)-G_{n}(z'_j)] >  c,$$ for some constant $c$ depending only on $d,\e,$ and $\U.$ Hence we are done.
\end{proof}
We now have all the necessary ingredients to finish the proof of Lemma \ref{resquant1}.
\begin{proof}[Proof of Lemma \ref{resquant1}]\label{finalcase}  We will show: for all $\sigma \in \Omega \setminus \Omega_{(\e)}$ there is a set $A$ which is a subset of the set in \eqref{glueset1} (hence is glued to get $\overset{wired}{\U_n}$ from $\U_n$) such that: $A$ satisfies the hypothesis of Lemma \ref{disjointpath23}.
Before showing the above we first discuss why it suffices.
Immediately using Lemma \ref{disjointpath23} we have pairs 
$(z_1,z'_1),\ldots (z_k,z'_k)$  with $k=\Omega (n)$ which are glued and have paths connecting them of length $O(n)$ such that no edge appears in more than a single path.
Also, $$G_{n}(z_j)-G_{n}(z'_j)\ge c',$$ for $j=1\ldots k.$ Recalling $\overset{wired}{\mathcal{E}}$ from \eqref{minenergy}, we see that by  Lemma \ref{gain1} 
$\overset{wired}{\mathcal{E}}\le \mathcal{E}-d,$ for some constant $d=d(\e).$ 
Hence we are done  by  \eqref{proofstra1}.

We now proceed to show existence of such sets $A.$ 
Recall $\beta$ from \eqref{areaparam} 
and let $
\beta_{-4}>\beta_{-3}>\beta_{-2}>\beta_{-1}>\beta_0=\beta>\beta_1>\beta_2>\beta_3>\beta_4,$ be such that for all $i=-4,-3,\ldots ,3$, we have $d({\rm{Geo}}_{\beta_{i}},{\rm{Geo}}_{\beta_{i+1}})= \e_1 \,(\e_1 \mbox{ is specified soon}).$ 
\begin{figure}[hbt]
\center
\includegraphics[scale=.5]{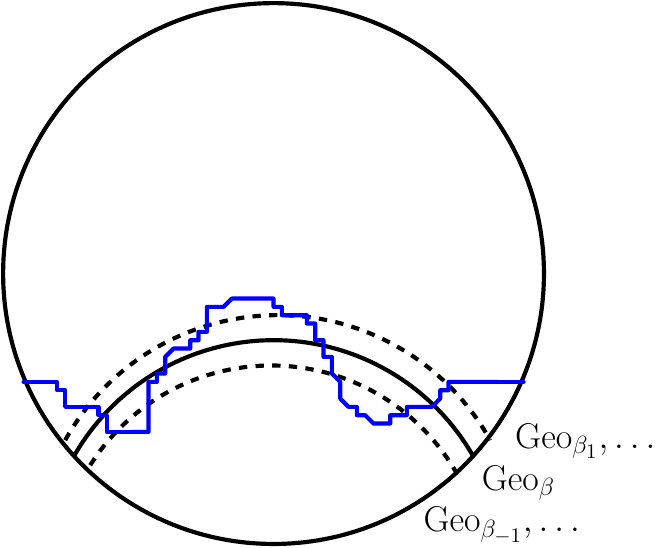}
\caption{${\rm{Geo}}_{\beta_i}$'s on either side of ${\rm{Geo}}_{\beta}$ measure how far the boundary of $\mathring{R}_{\rm{B}}$, i.e., the blue curve, deviates from ${\rm{Geo}}_{\beta}$. The first case is when the blue curve is below ${\rm{Geo}}_{\beta_{-2}}$ and similarly the second case considers the case when it is above ${\rm{Geo}}_{\beta_{2}}$. The third case deals with the situation when the blue curve which is $*$-connected, intersects both ${\rm{Geo}}_{\beta_{i}}$ and ${\rm{Geo}}_{\beta_{i+1}}$ for some $i,$ and the last case is when it is contained in a band around ${\rm{Geo}}_{\beta}.$}
\label{f.bfar1}
\end{figure}
Recalling the notations from \eqref{newnot1} the proof involves considering the following cases:
\begin{enumerate}[i.]
\item $\partial_{in} \mathring{R}_{\rm{B}}$ is a subset of  $\U_{\ge\beta_{-2},n},$
\item $\partial_{in} \mathring{R}_{\rm{B}}$ is a subset of $\U_{< \beta_{2},n},$
\item  $\partial_{in} \mathring{R}_{\rm{B}}$ intersects $\U_{\ge\beta_i,n}$ and $\U_{<\beta_{i+1},n}$ for some $i \in \{-4,\ldots, 3\},$ 
\item  $\partial_{in} \mathring{R}_{\rm{B}}$ is a subset of $\U_{<\beta_{-3},n}\cap \U_{\ge \beta_{3},n},$
\end{enumerate}
(see Figure  \ref{f.bfar1} for an illustration).
We choose $\e_1= \frac{\e}{100}$ and hence it follows that,
 \begin{equation}\label{choice1}
 {\rm{area}}(\U_{<\beta_{j-1}}\cap \U_{\ge\beta_{j}}) \le \frac{C\e}{100},  
 \end{equation} for all $j=-4,\ldots 3,$ for some universal constant $C>0$ depending on $\U$ 
($100$ is just  a large enough number and is nothing special). Also let $\e'>0$ be such that the LHS in \eqref{choice1} is at least $\e'.$ It is easy to see that $\e'\ge c\e_1$ for some $c=c(\U)>0.$
The arguments for cases i., and ii. are  symmetric and hence to avoid repetition we provide arguments only for case i.

By hypothesis $\partial_{in} \mathring{R}_{\rm{B}}$ lies entirely in $\U_{\ge\beta_{-2},n}$. Now by arguments similar to the proof of Lemma \ref{disjointpath23} ii.  for all large enough $n,$ any two points in $ \U_{<\beta_{-1},n}$ are connected by a path in $\U_n$ which lies entirely in $\U_{<\beta_{-1}+\frac{\e_1}{2},n}$ and hence does not intersect $\partial_{in}\mathring{R}_{\rm{B}}$. Thus by definition
$ \U_{<\beta_{-1},n}$ is in $\U_n \setminus \mathring{R_{\rm{B}}}.$ Now recall that $${\rm{area}}(\U_{<\beta_0})=(1-\alpha){\rm{area}}(\U).$$ 
Thus
${\rm{area}} (\U_{<\beta_{-1}})\ge (1-\alpha){\rm{area}} (\U)+\e'.$
Hence the number of blue vertices in  $\U_{<\beta_{-1},n}$ is at least $\e' n^2$.  
Recall from \eqref{glueset1} that both  $\sigma^{-1}({\rm{R}})\cap \mathring{R}_{\rm{B}}$ and $\sigma^{-1}({\rm{B}})\cap \{\U_n \setminus \mathring{R}_{\rm{B}}\}$ are glued to obtain $\overset{wired}{\U_n}$ from $\U_n$. 
Thus at least $\e' n^2$ vertices are glued, and hence we are done by Remark \ref{suff1}. 

 iii.  By hypothesis  in this case $\partial_{in}{\mathring{R}_{\rm{B}}}$ (which is a $*-$ connected set by Remark \ref{connrem}) intersects both ${\U}_{\ge \beta_i,n}$ and ${\U}_{<\beta_{i+1},n}.$ Since by construction $d({\rm{Geo}}_{\beta_i}, {\rm{Geo}}_{\beta_{i+1}})= \e_1$, it follows that $\partial_{in}{\mathring{R}_{\rm{B}}}$ intersects at least $d n$ many $\U^{(i),n}$'s for some $d=d(c,\e_1)$ in the region $\U_{<\beta_i,n}\cap \U_{\ge\beta_{i+1},n}$. Thus $A=\partial_{in} \mathring{R}_{\rm{B}}$ satisfies the hypothesis of  Lemma \ref{disjointpath23} and  is also glued by \eqref{glueset1}. 
Therefore we are done.

 iv. Recall that by hypothesis $\sigma \in \Omega^c_{(\e)}$ and $\partial_{in} \mathring{R}_{\rm{B}}$ is a subset of $\U_{<\beta_{-3},n}\cap \U_{\ge\beta_{3},n}.$  Since $\sigma \in \Omega^c_{(\e)}$ and $\e_1 = \frac{\e}{100}$, there exists at least $\frac{\e n}{20}$ many $\U^{(i),n}$'s  containing at least one red vertex all of which are in $\U_{\ge\beta_{-3},n}$ or  there exists at least $\frac{\e n}{20}$ many $\U^{(i),n}$'s containing at least one blue vertex, all of which are in $\U_{<\beta_{3},n}$. We are done by taking the set of such points to be $A$.
\end{proof}
\section{Proof of Theorem \ref{mainresult}}\label{pmr}

Given the preceding results, the proof of the main theorem follows from the following two hitting time results for the competitive erosion chain $\{\varsigma_t\}$. 
\begin{lem}\label{linehit2} Given any $\e>0,$ there exist
a positive constant $\e_1,$ such that for all small enough $\dd$ (blob size), there exist
positive constants $c=c(\e,\dd),d=d(\e,\dd),$ such that 
for all large enough $n=2^m,$ and $\sigma\in \Omega_{(\e)}\cap \mathcal{A}_{\e_1},$ 
\begin{eqnarray}\label{ht11}
\mathbb{P}_{\sigma}(\tau(\mathcal{G}_{{\e}^{1/4}}) >dn^2) & \le &e^{-cn}.
\end{eqnarray}
\end{lem}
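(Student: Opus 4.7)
The plan is to quantify how quickly the erosion dynamics wipes out the ``dust'' of the wrong color lying outside the $\e^{1/4}$-band around $\partial \U_{(\alpha)}$. Since $\sigma \in \mathcal{A}_{\e_1}$ has at most $2\e_1 n^2$ total dust sites and $\sigma \in \Omega_{(\e)}$ confines this dust to at most $\e n$ level shells $\U^{(i)}$, I will choose $\e_1 = \e_1(\e)$ sufficiently small so that the initial dust is both scarce and, together with its dynamical descendants, squeezed into a narrow strip. Define the potential
$$\Phi_t := \#\{x \in \U_{(\alpha-\e^{1/4})} \cap \U_n : \sigma_t(x)=2\} + \#\{x \in \U_n \setminus \U_{(\alpha+\e^{1/4})} : \sigma_t(x)=1\},$$
so that $\{\tau(\mathcal{G}_{\e^{1/4}}) \le dn^2\}$ coincides with $\{\Phi_t = 0 \text{ for some } t \le dn^2\}$. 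Since $\Phi_0 \le 2\e_1 n^2$, the task reduces to establishing a multiplicative negative drift $\E[\Phi_{t+1}-\Phi_t \mid \mathcal{F}_t] \le -c\,\Phi_t/n^2$ throughout the relevant time window.

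The drift has two contributions. For \emph{absorption} of a red dust site $x \in \U_{(\alpha-\e^{1/4})}$: a blue walker emitted from $\U_{1,n}$ stops at the first red site it meets. Because $\sigma_t$ is close to the target configuration, a robust IDLA hit estimate (provided by the appendix and combined with the $\ell_\infty$-mixing at time scale $n^2$ used in Remark \ref{remupbound}) shows that the chance of hitting any specific red dust site in $\U_{(\alpha-\e^{1/4})}$ before exiting the blue region is at least $c/n^2$; summing over dust sites yields an absorption rate of order $\Phi_t^{\mathrm{red}}/n^2$, and the symmetric argument handles blue dust in $\U_n \setminus \U_{(\alpha+\e^{1/4})}$. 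For \emph{creation} of new dust: the red walker from $\U_{2,n}$ would have to traverse a long chain of essentially pure-blue level shells before reaching $\U_{(\alpha-\e^{1/4})}$, and a Beurling-type estimate in the spirit of Lemmas \ref{capa} and \ref{hitprob2} bounds the penetration probability by $n^{-\beta}$ for some $\beta>0$. Hence new deep dust is created at rate $O(n^{-\beta})$ per step, negligible next to the absorption term.

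Combining the two gives $\E[\Phi_{t+1}-\Phi_t\mid\mathcal{F}_t] \le -c\,\Phi_t/n^2 + O(n^{-\beta})$ so long as the process stays inside an enlarged good set, which by Lemma \ref{goodbad} holds throughout $[0,e^{dn^2}]$ with overwhelming probability. A standard Azuma-type bound for processes with multiplicative drift and bounded increments, coupled with a union bound over the $O(n^2)$ time steps, forces $\Phi_t = 0$ by some $t \le dn^2$ except on an event of probability at most $e^{-cn^{1/3}}$; the exponent $n^{1/3}$ arises from the fluctuation scale of the robust IDLA estimates in the appendix. The main obstacle is the uniform lower bound $c/n^2$ on single-site hit probabilities for dust sites: the random walk from $\U_{1,n}$ could in principle be shielded by the irregular geometry of $R_1(\sigma)$, so one must exploit $\sigma \in \Omega_{(\e)}$ to argue that the harmonic measure on $\partial B_2(\sigma)$ seen from $\U_{1,n}$ is within a constant factor of the harmonic measure on $\partial \U_{(\alpha)}$ seen from $x_1$, and in particular puts mass $\Omega(1/n^2)$ on each deep dust site.
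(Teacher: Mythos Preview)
Your drift approach differs fundamentally from the paper's, and the creation term contains a genuine gap. Even granting a per-step penetration bound $O(n^{-\beta})$ from a Beurling-type estimate (and the planar Beurling exponent is at best $\beta=1/2$), summing over $dn^2$ steps yields $\Theta(n^{2-\beta})$ expected new deep dust sites, which diverges. Equivalently, the drift $-c\,\Phi_t/n^2+O(n^{-\beta})$ vanishes at $\Phi_t\asymp n^{2-\beta}$ rather than at $0$, so no Azuma-type argument can force $\Phi_t$ all the way down to $0$. Per-step control of creation is simply too weak here; you need a bound that holds \emph{simultaneously} over the whole time window.

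The paper decouples the two colors instead. It dominates the evolving red region by an IDLA aggregate started from the initial red set and invokes the IDLA \emph{upper bound} (Theorem~\ref{IDL1}) to conclude that, simultaneously over all $C_1\e n^2$ rounds, no red walker reaches $\U_{(\alpha-C_2\sqrt{\e})}$ except on an event of probability $e^{-c(\sqrt{\e}n)^{1/3}}$; this kills creation of deep red dust outright rather than bounding it step by step, and is where the $n^{1/3}$ exponent actually enters. On that event the blue evolution inside $\U_{(\alpha-C_2\sqrt{\e})}$ dominates a \emph{killed} IDLA, and the paper proves the needed lower bound via the Lawler--Bramson--Griffeath $M{-}L$ decomposition: for each deep red site $z$, the number $M$ of blue walkers that would hit $z$ before exiting (absorption ignored) satisfies $\E M\ge C''C_1\e\, n^2/\log n$ by Lemma~\ref{gflower}, while the number $L$ that hit $z$ only after being absorbed is dominated by a sum $\tilde L$ over the at most $\e_1 n^2$ empty sites with $\E\tilde L\le C'\sqrt{\e_1}\,n^2/\log n$. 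Choosing $\e_1$ small makes $\E M\ge 4\,\E\tilde L$; Azuma on the two indicator sums plus a union bound over the $\le \e_1 n^2$ dust sites finishes. This $M{-}L$ comparison is exactly what resolves the ``main obstacle'' you flag: it replaces the fragile per-site stopping probability (which is indeed hard to lower-bound because shallow red can shield deep red) by a robust visit-count estimate for walkers that are \emph{not} stopped early.
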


\begin{lem}\label{main1}Given $\e>0,$ there exist
a positive constant $\e',$ such that for all small enough $\dd$ (blob size), there exist positive constants $b=b(\e,\dd),d=d(\e,\dd),$  such that 
for all large enough $n=2^m$ and  $\sigma\in \mathcal{G}_{\e}\cap \Gamma_{\frac{\e'}{2}},$
\begin{equation*}\mathbb{P}_{\sigma}
( \tau (\mathcal{G}^c_{{\e}^{1/4}}) >e^{d n})>1-e^{-b n}.
\end{equation*}
\end{lem}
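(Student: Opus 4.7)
The plan is to bootstrap the Lyapunov stability of Lemma~\ref{goodbad} into the spatial regularity claimed in Lemma~\ref{main1}, using the robust IDLA estimates from the Appendix together with a short-block regeneration argument.

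First, choose $\e'=\e'(\e)$ small enough that Remark~\ref{rem2} yields $\Gamma_{\e'}\subset \cA_{\e_{*}}$ for some $\e_{*}\ll \e$ (so in particular $\mathcal{G}_{\e}\cap \Gamma_{\e'/2}\subset \cA_{\e_{*}}$). Apply Lemma~\ref{goodbad} with $\e_{1}:=\e'/2$: for every $\sigma\in \Gamma_{\e'/2}$,
\[
\mathbb{P}_{\sigma}\bigl(\tau(\Omega\setminus \Gamma_{\e'})>e^{c n^{2}}\bigr)\ge 1-e^{-c n^{2}},
\]
with $c=c(\e',\dd)>0$. Since $e^{cn^{2}}\gg e^{dn^{1/3}}$, I condition on the overwhelmingly likely event $E$ that $\sigma_{t}\in \Gamma_{\e'}\subset \cA_{\e_{*}}$ for every $t\le e^{dn^{1/3}}$; this event has probability at least $1-e^{-cn^{2}}$.

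On $E$ the number of wrongly coloured sites is always $\le 2\e_{*}n^{2}$, but a priori such sites could lie arbitrarily far inside the wrong region; the Lyapunov control alone does not rule this out. The crucial second step is to deduce a uniform short-window containment using the robust IDLA estimates of the Appendix. Concretely, one would establish that for every $\sigma'\in \cA_{\e_{*}}\cap \mathcal{G}_{\e}$,
\[
\mathbb{P}_{\sigma'}\bigl(\sigma_{t}\in \mathcal{G}_{\e^{1/4}}\ \text{for all } t\le n^{2},\ \text{and }\sigma_{n^{2}}\in \mathcal{G}_{\e}\bigr)\ge 1-e^{-c_{1} n^{1/3}}.
\]
The heuristic is that on the $n^{2}$-scale (the diffusive/mixing scale on $\U_n$), each emitted walker lands within $O(n^{-1/3+o(1)})$ of $\partial \U_{(\alpha)}$ with overwhelming probability, so spreading to the $\e^{1/4}$-scale would force a random walker to beat a Beurling-type estimate in the spirit of Lemma~\ref{hitprob2}; iterated over the $\Theta(n^2)$ emitted walkers this yields the stretched-exponential $e^{-c_{1} n^{1/3}}$ factor. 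This is exactly where the \emph{robust} (random-environment) IDLA estimates of the Appendix are needed.

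Finally, split $[0,e^{dn^{1/3}}]$ into $e^{dn^{1/3}}/n^{2}$ disjoint blocks of length $n^{2}$ and iterate the displayed bound via the strong Markov property at each block boundary (at which, by the previous block's success, the chain lies back in $\mathcal{G}_{\e}$, resetting the hypothesis). A union bound then gives
\[
\mathbb{P}_{\sigma}\bigl(\tau(\mathcal{G}^{c}_{\e^{1/4}})\le e^{dn^{1/3}}\bigr)\le \frac{e^{dn^{1/3}}}{n^{2}}\cdot e^{-c_{1}n^{1/3}}+e^{-cn^{2}}\le e^{-bn^{1/3}},
\]
once $d<c_{1}/2$. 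The main obstacle is the short-window bound above: it must be uniform over the \emph{random} configuration $\sigma'$ allowed by the Lyapunov constraint, and it requires quantitatively tight control of random walk exit measures from the thin bands $\U_{(\alpha\pm \e^{1/4})}\setminus \U_{(\alpha\pm \e)}$. These are precisely the appendix-level inputs the paper has deferred, and producing them is the real technical content behind this lemma.
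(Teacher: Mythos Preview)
Your overall architecture---Lyapunov stability via Lemma~\ref{goodbad}, then a block/regeneration argument driven by the IDLA estimates---matches the paper. The gap is in your regeneration mechanism: you require $\sigma_{n^{2}}\in\mathcal{G}_{\e}$ at each block endpoint so that the hypothesis resets. But your heuristic (Beurling-type bounds preventing walkers from reaching depth $\e^{1/4}$) is an IDLA \emph{upper} bound; it gives ``stay in $\mathcal{G}_{\e^{1/4}}$'' but says nothing about contracting from $\mathcal{G}_{\e^{1/4}}$ back into $\mathcal{G}_{\e}$. That contraction would need an IDLA \emph{lower} bound that cleans the band from width $\e^{1/4}$ down to $\e$, which is the wrong direction for how these estimates scale (cf.\ Lemma~\ref{linehit2}, which goes from $\Omega_{(\e)}\cap\cA_{\e_1}$ only up to $\mathcal{G}_{\e^{1/4}}$). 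So as stated the iteration cannot be closed.

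The paper sidesteps this by regenerating not at $\mathcal{G}_{\e}$ but at the successive return times $\tau_{(1)}<\tau_{(2)}<\ldots$ to the weaker set $\Omega_{(c\e)}$ (at most $c\e n$ shells contain wrong-colour sites). These returns are controlled purely by the Lyapunov drift: Lemma~\ref{hitlemma1} shows that from $\Omega_{(c\e)}\cap\Gamma_{\e'}$ one returns to $\Omega_{(c\e)}$ within $O(n^{2})$ steps except with probability $e^{-Dn^{2}}$. Meanwhile the IDLA upper bound (Theorem~\ref{IDL1}) needs only $\Omega_{(c\e)}\cap\mathcal{G}_{\e^{1/4}}$ as input to guarantee that $\tau(\mathcal{G}^{c}_{\e^{1/4}})\gtrsim n^{2}$; combining these two gives Lemma~\ref{finalargument1}, namely $\mathbb{P}_{\sigma}(\tau^{+}(\Omega_{(c\e)})\le \tau(\mathcal{G}^{c}_{\e^{1/4}}))\ge 1-e^{-bn^{1/3}}$. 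The union bound then runs over the $\tau_{(j)}$'s rather than over fixed blocks, and the hypothesis at each $\tau_{(j)}$ is $\Omega_{(c\e)}\cap\Gamma_{\e'}\cap\mathcal{G}_{\e^{1/4}}$---all three of which are automatic (the first by definition of $\tau_{(j)}$, the second on the Lyapunov event, the third because exit from $\mathcal{G}_{\e^{1/4}}$ has not yet occurred). No return to $\mathcal{G}_{\e}$ is ever needed after time $0$.
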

The proof of the above two lemmas require developing robust arguments for IDLA on $\U_n$. These are presented in the next section. 
We now show how to finish the proof of Theorem \ref{mainresult} using the above. We start by proving the following lemma.
Given any positive $\e_1,$ and $\e_2,$ 
define the set, $$\mathcal{C}_{\e_1,\e_2}=\mathcal{G}_{\e_1}\cap \Gamma_{\e_2}.$$ 

\begin{lem}
\label{t.quickhit}
Given small enough $\e,\e'>0$, for all small enough $\delta,$ 
there exists  constants $D,D'>0$ (depending on $\delta$) such that for all large enough $n=2^m$ and any $\sigma\in \Omega,$
\begin{align}\label{hittingestimate1}
\mathbb{P}_{\sigma}\bigl(\tau(\mathcal{C}_{{\e}^{1/4},\frac{\e'}{2}})  \ge Dn^2 \bigr)
& \le e^{-D'n},\\
\label{hitest56}
 \E_{\sigma}\bigl(\tau(\mathcal{C}_{{\e}^{1/4},\frac{\e'}{2}})\bigr)& \le 2 Dn^2. 
\end{align}
\end{lem}
\begin{proof}
The bound on the expected hitting time  follows from  \eqref{hittingestimate1}, as the latter holds for any starting $\sigma$ and hence the expected hitting time is clearly dominated by $Dn^2$ times a geometric variable with mean $2$. 
Given $\e,\e'$, recalling the notation \eqref{stopnot}, let
\begin{eqnarray*}
\tau' = \tau (\Gamma_{\frac{\e'}{4}}), \tau'' = \tau({\Gamma_{\frac{\e'}{2}}^\mathsf{c}},\tau'), \\
\tau''' = \tau({\Omega_{(\e)}}\cap \cA_{\e_1},\tau'), \tau'''' = \tau({\mathcal{G}_{{\e}^{1/4}}},\tau'''),
\end{eqnarray*}
where the $\e_1$ in the definition of $\tau''',$ is the same as the one in the hypothesis of Lemma \ref{linehit2}. 
Now to show \eqref{hittingestimate1}, 
we notice the following containment of events for any positive $A$:
$$\{\tau'\le An^2\} \cap\{\tau'' \ge e^{cn^2}\}
\cap \{\tau''' \le An^2 \}
\cap \{\tau'''' \le An^2 \}
\subset \{\tau({\mathcal{C}_{{\e}^{1/4},\e'/2}})\le 3An^2\}.$$ 
To see why this containment holds, we first notice that $\Gamma_{\frac{\e'}{4}}\subset \Gamma_{\frac{\e'}{2}}.$
Thus, the first two events imply that the process 
hits the set $\Gamma_{\frac{\e'}{2}}$ in $An^2$ steps, 
and stays inside, for an exponential (in $n^2$) amount of time, 
and in particular stays in $\Gamma_{\frac{\e'}{2}}$ 
from time $An^2$ through time $3An^2$. 
In addition, the third and fourth events together imply that 
regardless of where the chain is at time $An^2$,
the chain enters $\Omega_{(\e)}\cap \cA_{\e_1}$ by time $2An^2,$
and then enters $\mathcal{G}_{{\e}^{1/4}}$ by time $3An^2$.
Hence, in the intersection of the four events,
the hitting time of 
$\mathcal{C}_{{\e}^{1/4},\frac{\e'}{2}}=\mathcal{G}_{{\e}^{1/4}}\cap \Gamma_{\frac{\e'}{2}}$ 
is at most $3An^2$.  Let $D=3A$.
Now, for a large enough constant $A$,  there exists  $D'>0,$ such that the probabilities 
 of all the four events on the left hand side are at least $1-e^{-D'n}$.
This follows from Lemmas  \ref{hitting}, \ref{goodbad},  and \ref{linehit2}  respectively.
Hence by union bound, \eqref{hittingestimate1} is true for a slightly smaller value of $D'$. 
\end{proof}
\subsection{Proof of Theorem \ref{mainresult}}\label{pom1}
Note that \eqref{hittingestimate1} is true for all small enough choices of $\e$ and $\e'.$ 
However, given $\e>0,$ by Lemma \ref{main1} there exist $\e',b,d,$ such that for all large enough $n=2^m$, and  $\sigma \in \mathcal{C}_{{\e},\frac{\e'}{2}},$
\begin{equation}\label{last1}\mathbb{P}_{\sigma}
( \tau (\mathcal{G}^{c}_{\e^{1/4}}) >e^{dn})>1-e^{-bn}.
\end{equation}
For such an $\e',$ the proof of Theorem \ref{mainresult} follows by using Lemma \ref{hitstation} with the following choices of parameters:
\begin{eqnarray*}
A_1= \mathcal{C}_{{\e},\frac{\e'}{2}}, A_2=\mathcal{G}^c_{{\e}^{1/4}}, t_1=2Dn^2, t_2=e^{dn}.
\end{eqnarray*}
The above choice of parameters satisfy the hypotheses of Lemma \ref{hitstation} by  \eqref{hittingestimate1} and \eqref{last1}.
\qed

\section{Hitting time results using IDLA estimates}\label{IDLArobust}
In this section we provide the proofs of Lemmas \ref{linehit2} and \ref{main1} that were used in Section \ref{pmr}. 
We start with a generalized IDLA estimate on $\U_n,$ (recall the discussion about IDLA from Section \ref{idla1}).  
To ensure brevity, we will omit stating the standard definitions regarding IDLA and only include the new definitions that we will need. 
 For the standard definitions and other results about IDLA on $\Z^d$ see \cite{lbg}. 
 We now precisely define the IDLA process in our setting. To proceed, we first define
adding one particle started at $x$ to an existing aggregate $S\subset \U_n$. For $x \in \U_n$, denote by $A(S; x),$
the IDLA aggregate obtained as follows:  

Let $X= \{X_0, X_1, \ldots\}$ be the discrete time simple random walk on $\U_n,$
with $X_0 = x$ and recall that $\tau(\U_n\setminus S),$ denotes the first exit time from $S$ for the walk. Define,
\begin{align}\label{IDLAcrucnot}
A(S; x) := S \cup \{X_{\tau(\U_n\setminus S)}\}. 
\end{align}
Below we consider the setting when the starting point of the random walk $X_0$ is distributed uniformly over $\U_{{\rm{R}},n}.$ 
The IDLA aggregate is now a growing cluster which at time $t$ will be denoted by $A(t)$ with $A(0)$ denoting the initial aggregate.
One of the key differences between our setting and \cite{lbg} is that the initial aggregate $A(0)$ is not an empty set and consists of all the sites in $\U_{n} \setminus \U_{(\alpha),n}$  along with all the sites in an arbitrary $\e' n $ many $\U^{(i),n}$'s for some small $\e' >0$ (for the definitions of $\U^{(i),n}$ and $\U_{(\alpha),n}$ see \eqref{newnot1} and \eqref{optimalconf} respectively).
 The next result will bound the size of the cluster after $\e n^2$ particles have been released, where  $\e \ge D\e'$ for some constant $D>0.$   
\begin{thm}\label{IDL1}(IDLA upper bound) There exists positive constants $D,C,c$ depending only on $\U,$ and $\alpha,$ such that for all small enough $\e',\e>0,$ with $\e>D\e',$
 and starting with any initial condition $A(0)$ as described above, for all large enough $n,$ with probability at least $1-e^{-c {\e} n},$
$$A(\e n^2)\setminus A(0) \subset \U_n \setminus \U_{(\alpha-C\sqrt \e),n}.$$ 
\end{thm}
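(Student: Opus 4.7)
The plan is to adapt the classical Lawler--Bramson--Griffeath IDLA upper bound \cite{lbg} to our domain $\U_n$ with a nontrivial initial cluster $A_0$. Set $\mathcal{D}_\e := \U_{(\alpha - C\sqrt\e)} \cap \U_n$; by a union bound over the $O(n^2)$ sites of $\mathcal{D}_\e$ the theorem reduces to a pointwise estimate
\begin{equation*}
\mathbb{P}(z \in A_{\e n^2}) \le \exp\bigl(-c'(\sqrt{\e}\,n)^{1/3}\bigr),
\end{equation*}
uniform in $z \in \mathcal{D}_\e$, after which a slight shrinking of the constant absorbs the $\log(n^2)$ loss from the union bound.

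For the pointwise bound, let $X^{(i)}$ denote the $i$-th IDLA walk with stopping time $\tau_i$, and define
\begin{equation*}
M_z := \#\{i \le \e n^2 : X^{(i)} \text{ visits } z \text{ before } \tau_i\},
\end{equation*}
so that $\{z \in A_{\e n^2} \setminus A_0\} = \{M_z \ge 1\}$, since $z$ can join the cluster only as the stopping point of the unique particle that adds it. Because $A_0 \supset \U_n \setminus \U_{(\alpha)}$, on the event $\{X^{(i)} \text{ visits } z \text{ before } \tau_i\}$ the walk must first exit $A_0$ into $\U_{(\alpha)}$, and then traverse the ``moat'' $\mathcal{M} := \U_{(\alpha)} \setminus \U_{(\alpha - C\sqrt{\e}/2)}$ of spatial width $\Theta(\sqrt\e)$ all the way to $z$ without hitting any vacant site of $A_{i-1}^c$. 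The moat $\mathcal{M}$ contains $\Theta(\sqrt\e\, n^2)$ lattice sites, of which at most $O(\e' n^2) = O(\e n^2/D)$ are filled in $A_0$ and at most $\e n^2$ become filled during the process, so choosing $C$ large relative to $1/D$ keeps the vacant fraction of $\mathcal{M}$ bounded below by a constant throughout.

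These inputs reduce the problem to a Beurling-type random walk estimate on $\U_n$: starting from inside $A_0$, the chance that an excursion into $\U_{(\alpha)}$ crosses $\mathcal{M}$ without encountering a vacant site decays exponentially in $\sqrt\e\, n$. Combined with the total of $\e n^2$ particles, a Markov-type bound $\mathbb{P}(M_z \ge 1) \le \mathbb{E}[M_z]$ would already yield a pointwise probability of order $e^{-c_1 \sqrt\e\, n}$; the weaker exponent $(\sqrt\e\, n)^{1/3}$ asserted in the theorem leaves enough slack to accommodate the standard LBG time-partition concentration argument applied to $M_z$ (partitioning the $\e n^2$ particles into batches, controlling the cluster growth scale at each batch via a martingale inequality), even after replacing the per-particle bound by a version robust to the evolving cluster $A_{i-1}$. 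The required Green's function and harmonic measure estimates on $\U_n$ are inherited from Theorem \ref{convergence} and Lemma \ref{hitprob2} via \cite{tc}.

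The main obstacle is the adversarial nature of the initial shells and of the growing cluster: the $\e' n$ initial shells may be placed arbitrarily within $\U_{(\alpha)}$, and during the process $A_{i-1}$ may grow in nonlocal ways, so $\mathcal{M}$ could a priori contain thin filled corridors that the walk would then follow to $z$. Ruling this out uses the star-connectedness of $\partial_{in}$-boundaries (Remark \ref{connrem}) combined with the foliation by shells $\U^{(i)}$ from \eqref{newnot1} to show that any lattice path from $\partial \U_{(\alpha)}$ into $\mathcal{D}_\e$ must cross $\Omega(\sqrt\e\, n)$ shells, a property preserved under insertion of $O(\e/D)\cdot n^2$ adversarial initial fills together with the subsequent $\e n^2$ IDLA-added sites, provided $D$ is chosen large. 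Establishing this deterministic blocking lemma, and then propagating it through the LBG concentration scheme so as to decouple the particle indices from the evolving cluster, is the technical crux.
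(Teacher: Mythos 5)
Your proposal takes a genuinely different route from the paper, but the route contains a real gap that you explicitly flag and then do not close. The paper does not argue pointwise at all: it adapts the iterative multi-scale pausing argument of \cite{hugo}, constructing a sequence of partial aggregates $A_{(j)}$ paused at distances $m_j$, using Lemma \ref{prob123} to show the number $k_j$ of still-active particles decays geometrically until $k_j\le n^{1/3}$, and then bounding the residual growth naively; crucially, the pausing radii $m_j$ are chosen \emph{adaptively} so as to skip the at most $\e' n$ shells $\U^{(i)}$ that are contaminated by the initial cluster, and the contribution of those skipped shells is bounded by $\sum_\ell (m'_\ell-m_\ell)\le \e n/\e_1$ and absorbed into the final constant $C$. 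This adaptive radius choice, together with the geometric decay of $k_j$, is precisely what neutralizes the two adversaries (the arbitrary initial shells and the evolving cluster) simultaneously.

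Your sketch reduces to a pointwise bound on $\mathbb{P}(z\in A_{\e n^2})$ via $\E[M_z]$, where $M_z$ counts walks reaching $z$ while still inside the current aggregate, and then appeals to a ``Beurling-type'' estimate for a walk crossing the moat $\mathcal{M}$ without hitting a vacant site. There are three problems. First, the estimate you invoke (Lemma \ref{hitprob2}) is about hitting a fixed $*$-connected set of macroscopic diameter and gives only polynomial decay $O(n^{-\beta})$; it is not an avoidance estimate for a positive-density set and does not deliver anything like $e^{-c\sqrt\e\, n}$ per walk. Second, even the density-of-vacant-sites observation fails to rule out the essential pathology: the $\e n^2$ newly filled sites can in principle build a one-lattice-wide corridor ($O(\sqrt\e\, n)$ sites, far fewer than $\e n^2$) across the moat, so a small total filled fraction in $\mathcal{M}$ does not prevent crossings; your ``deterministic blocking lemma'' via shell-counting only says paths cross $\Omega(\sqrt\e\,n)$ shells, which a filled corridor does anyway. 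Third, the final sentence — ``propagating it through the LBG concentration scheme so as to decouple the particle indices from the evolving cluster, is the technical crux'' — is an accurate description of the hard part, but it is left as an assertion; this is exactly the step the paper's iteration scheme (geometric decay of $k_j$, abelian property, adaptive $m_j$) is designed to carry out, and there is no obvious way to substitute a pointwise/martingale argument for it. In short, you have correctly identified where the difficulty lives, but the argument you propose neither proves the per-walk estimate you need nor supplies the mechanism that replaces the paper's iteration.
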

\begin{figure}[hbt]
\centering
\includegraphics[scale=.58]{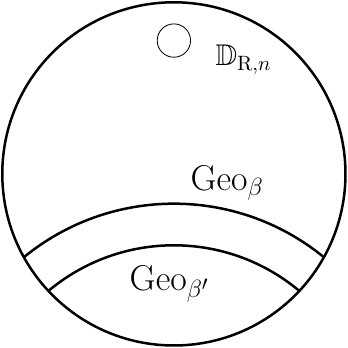}\\
\caption{Illustrating Theorem \ref{IDL1} in case of the disc. The geodesics $\rm{Geo}_\beta$ and ${\rm{Geo}}_{\beta'}$ denotes the boundaries of $\D_{(\alpha)}$ and $\D_{(\alpha-C\sqrt{\e})}$ respectively. The above theorem says that if the initial aggregate contains all of the region above $\rm{Geo}_\beta$, i.e., $\D\setminus \D_{(\alpha)}$, and at most $\e' n$ (where $\e' \ll \e$) many thin strips $\D^{(i),n}$ (see \eqref{newnot1}), then in $\e n^2$ rounds of IDLA on $\D_n,$ where the random walks start from $\D_{{\rm{R}},n}$, the aggregate does not occupy any new site below ${\rm{Geo}}_{\beta'},$ i.e., the new part of the aggregate stays in $\D \setminus \D_{(\alpha-C\sqrt \e)}$.}
\label{idla12}
\end{figure}

\begin{remark}Note that since $\alpha$ is fixed, for all small $\e,$ the geodesics which act as boundaries of $\U_{(\alpha)}$ and $\U_{(\alpha-\e)}$ are up to  constants, at distance $\e$ apart. Thus, the above result says that if the initial aggregate consisted of all sites in $\U_n \setminus \U_{(\alpha),n}$ and at most  $\e'n$ many shells, $\U^{(i),n},$ then in the next $\e n^2$ rounds, the aggregate formed by the new particles is confined within a distance of  $\sqrt{\e}$ from $\U_n \setminus \U_{(\alpha),n}$. Note that one should expect the correct bound to be $O(\e)$ instead of $O(\sqrt{\e}).$ However the weaker bound is sufficient for our purpose.
\end{remark}

\begin{remark}\label{domination123}
The IDLA process $A(t)$ (where only the red cluster grows) and the competitive erosion process $\varsigma_t,$
 can be coupled where the same red random walk is used for the IDLA  and the competitive erosion process. Under the above coupling 
if $A(0)=S_{\rm{R}}(\varsigma_0)$ i.e., the initial set of red particles are same in both the processes, then for all subsequent times, $S_{\rm{R}}(\varsigma_t)\subset A(t)$.
\end{remark}
Moreover, in the subsequent arguments we also need to consider a similar IDLA process by the blue particles. Clearly the above results continue to hold for such a process as well. 

\subsection{Proofs of Lemmas \ref{linehit2} and \ref{main1}} Before proving Theorem \ref{IDL1}, we finish all the remaining  proofs. 
Note that, to prove Lemma \ref{linehit2}, we have to prove a lower bound on the growth of the IDLA cluster. 
We follow the approach in \cite {lbg} (the reader is encouraged to refer to \cite{lbg}, for a more detailed explanation of some of the arguments appearing in the proof). Similar statements for IDLA, on the entire lattice $\Z^2,$ have cleaner proofs owing to symmetry of $\Z^2.$ However, in our setting, we need more robust approaches and hence  will rely on certain heat kernel bounds for the random walk on $\U_n$ from \cite{wtf}. 
We first quote the following hitting time estimate for the random walk on $\U_n$.
\begin{lem}\label{gflower}\cite[Lemma 5.3]{tc1} Given a small enough $\e> 0$,  for all large enough $n$, for all $z \in \U_{(\alpha-\sqrt{\e}),n}$,  and $y\in \U_n,$ such that $d(y,z) \le \e^2$, 
$$\mathbb{P}_{y}(\tau(z)< \tau(\U_n\setminus  \U_{(\alpha-\e),n}))= \Theta(\frac{\log(1/d(y,z))}{\log n}),$$
where the constant in the $\Theta(\cdot)$ notation, depends on $\e, \alpha, \U.$
\end{lem}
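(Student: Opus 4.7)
My plan is to reduce the hitting probability to a ratio of killed Green's functions on $\U_n$ and then estimate each factor using standard two-dimensional Green's function asymptotics, transferred to $\U_n$ via the local CLT of \cite{BC, wtf}. Set $r := d(y, z)$ and $D := \U_n \cap \U_{(\alpha - \e)}$, and let $g^D_n(\cdot,\cdot)$ denote the discrete-time Green's function of simple random walk on $\U_n$ killed on first exit from $D$. The strong Markov / excursion decomposition gives the standard identity
\[
\mathbb{P}_y\bigl(\tau(z) < \tau(\U_n \setminus D)\bigr) \;=\; \frac{g^D_n(y, z)}{g^D_n(z, z)},
\]
so it suffices to show $g^D_n(z,z) \asymp \log n$ and $g^D_n(y,z) \asymp \log(1/r)$, with constants depending on $\e$, $\alpha$, and $\U$.

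The geometric input I would use first is that $z$ sits deep in $D$. Since $z \in \U_{(\alpha - \sqrt{\e})}$, the bi-Lipschitz nature of $\psi$ from Section \ref{commen1} i.\ together with the area parametrization \eqref{areaparam} yields that the Euclidean distance from $z$ to the hyperbolic geodesic bounding $\U_{(\alpha - \e)}$ inside $\U$ is at least $c\sqrt{\e}$, for some $c = c(\U, \alpha) > 0$. Thus $D \supset \U_n \cap B(z, c\sqrt{\e})$ while having Euclidean diameter $O(1)$. Since $r \le \e^2 \ll c\sqrt{\e}$, the point $y$ also lies well inside $D$. Standard two-dimensional Green's function asymptotics---obtained from the potential kernel on $\Z^2$ together with the heat-kernel comparison between random walk on $\U_n$ and reflected Brownian motion in \cite{BC, wtf}---would then give
\[
g^D_n(z, z) \asymp \log(\sqrt{\e}\, n) \asymp \log n,
\qquad
g^D_n(y, z) \asymp \log(\sqrt{\e}/r) \asymp \log(1/r),
\]
the last equivalences using that $\e$ is a fixed constant and $r \le \e^2 \le \sqrt{\e}$. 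Dividing produces the claimed $\Theta(\log(1/r)/\log n)$.

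The main technical obstacle will be the case when $z$ is close to $\partial \U$, which can occur because $\U_{(\alpha - \sqrt{\e})}$ contains an arc of $\partial \U$ through $x_1$; in that regime the walk near $z$ reflects off $\partial \U$ and the naive $\Z^2$ Green's function asymptotics must be replaced by their reflected-walk analogues. I would deal with this via the local half-plane description \eqref{localhalf}: after a bi-Lipschitz change of coordinates in $B(z, c\sqrt{\e})$, the curved boundary $\partial \U$ becomes a graph of a function with quadratic error, and the discretized walk on $\U_n$ matches the reflected simple random walk on a discretized half-plane up to the polynomially small error quantified in \cite{wtf}. Since the Green's function of reflected simple random walk on a discretized half-plane has the same logarithmic behavior as the $\Z^2$ Green's function (up to the familiar factor of two from the method of images), the two-sided estimates above survive uniformly over the position of $z$ in $\U_n \cap \U_{(\alpha - \sqrt{\e})}$, completing the plan.
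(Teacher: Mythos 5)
The paper does not prove this lemma here: it is stated as a citation to \cite[Lemma 5.3]{tc}, and no proof appears in this document, so there is no in-paper argument to compare your attempt against. Judged on its own terms, your plan is the natural one and looks sound in outline. The identity $\mathbb{P}_y(\tau(z) < \tau(\U_n\setminus D)) = g^D_n(y,z)/g^D_n(z,z)$ is a standard first-visit decomposition and holds for any Markov chain, so it is insensitive to the irregular degrees of $\U_n$ near $\partial\U$. The geometric input is also right: since $\alpha$ is fixed in $(0,1/2]$, the level sets $\gamma_{\beta(\alpha-\e)}$ and $\gamma_{\beta(\alpha-\sqrt\e)}$ stay at distance $\Theta(1)$ from $x_1,x_2$ where $\nabla\log\bigl|\frac{\psi-i}{\psi+i}\bigr|$ is bounded away from $0$ and $\infty$, so they lie $\Theta(\sqrt\e)$ apart; together with $r \le \e^2$ this gives $\log(\sqrt\e/r)\asymp\log(1/r)$ and places both $y$ and $z$ well inside $D$. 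The targets $g^D_n(z,z)\asymp\log n$ and $g^D_n(y,z)\asymp\log(1/r)$ are exactly what two-dimensional potential-kernel asymptotics predict, and you correctly single out the only genuinely delicate point — $z$ can be microscopically close to $\partial\U$ — and the right remedy, which is to replace the free lattice Green's function by its reflected analogue.

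Two caveats worth flagging. First, your phrase that the walk on $\U_n$ ``matches the reflected simple random walk on a discretized half-plane up to the polynomially small error quantified in \cite{wtf}'' somewhat overstates the form of those results: \cite{BC,wtf} give heat-kernel convergence to reflected Brownian motion, not a direct graph-to-graph coupling, and the curved, locally irregular boundary of $\U_n$ (edges crossing $\U^c$ are deleted) means the image-method picture is only approximate. Making that step two-sided and uniform in the position of $z$ — including $z$ at graph distance $O(1)$ from $\partial\U$ — is exactly the kind of technical work one expects the companion paper \cite{tc} to carry out, and is the part of your outline that would need the most care if written out in full. Second, the statement should be read with $y\ne z$ (otherwise the asserted order is $\infty/\log n$); for nearest neighbors $d(y,z)=1/n$, so $\log(1/d(y,z))\asymp\log n$ and the claimed order is $\Theta(1)$, which is consistent with your Green's function picture.
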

\begin{proof}[Proof of Lemma \ref{linehit2}] Since by hypothesis $\sigma \in \Omega_{(\e)}\cap \cA_{\e_1},$ there are at most $ \e_1 n^2$ red vertices in $\U_{(\alpha),n}$ and similarly there are at most $\e_1 n^2$ blue vertices in $ \U_n\setminus \U_{(\alpha),n}$.  To bound the hitting time of $\cG_{\e^{1/4}},$ we need to bound the time it takes for the red particles in $\U_{(\alpha-\e^{1/4}),n}$ to turn blue and also the time taken by  blue particles in $\U_n\setminus\U_{(\alpha-\e^{1/4}),n}$ to turn red.
Because of the obvious similarity in the situations, we  provide arguments only for the former case.
The first step in the proof compares the IDLA process $A(t)$ (where only the red cluster grows) and the competitive erosion process $\varsigma_t.$
Now, by Remark \ref{domination123} and Theorem \ref{IDL1}, starting from $\sigma,$ with probability at least $1-e^{c {\e}n},$ no red walker reaches $\U_{(\alpha-C_2\sqrt \e),n}$ in the next $ C_1 \e n^2$ rounds for the IDLA process and hence for the competitive erosion process, where $C_2$ depends on $C_1$. 

Crucially, observe that on this event, the set of blue sites inside $\U_{(\alpha-C_2\sqrt{\e}),n},$ i.e., $\{S_{\rm{B}}(\varsigma_t): t \le C_1 \e n^2\}$ dominates a `killed' blue IDLA process where the blue walkers start uniformly from $\U_{{\rm{B}},n}$ and perform IDLA i.e., occupy the first vacant (red) site that they hit, but are killed on hitting $\partial_{out} \U_{(\alpha-C_2\sqrt \e),n},$ the outer boundary (see \eqref{bdry1}) of the set $ \U_{(\alpha-C_2\sqrt \e),n}$. This is because, on the aforementioned event,  no  red random walker reaches the region $\U_{(\alpha-C_2\sqrt \e),n}$. 

From the above discussion, it suffices to show that in the next  $ C_1 \e n^2$ rounds of the `killed' blue IDLA process, the blue walkers will occupy and hence wipe off all the red vertices in $\U_{(\alpha-{\e}^{1/4}),n}$, (by  assumption on the starting configuration $\sigma$  there are at most $ \e_1 n^2$ such red vertices) which amounts to proving a lower bound for the `killed' IDLA aggregate for the blue walkers. We adapt the proof in \cite{lbg}.   For the purposes of certain definitions, following the same strategy from \cite{lbg}, even if a blue random walk stops on hitting a red particle, we sample the entire trajectory of the walk till it gets killed on exiting $\U_{(\alpha-C_2\sqrt \e),n}.$
Let us denote by ${\rm{IDLA}}_{\rm{B}}(t),$ the blue aggregate for the killed IDLA process after the $t^{th}$ round.

Assuming this for any positive integer $i,$ we associate the following stopping times to the $i^{th}$ blue walker:  
\begin{itemize}
\item $\tau^{i}_{\rm{IDLA}}$: the stopping time in the killed IDLA process (i.e.,  when the blue walker either occupies a red site or gets killed by hitting $\partial_{out} \U_{(\alpha-C_2\sqrt \e),n},$ 
\item $\tau^{i}_z=\tau(z)$: the hitting time of $z \in \U_n,$
\item $\tau^{i}_{\rm{exit}}=\tau(\U_n \setminus \U_{(\alpha-C_2\sqrt \e),n})$:  the exit time of the set 
$\U_{(\alpha-C_2\sqrt \e),n}.$
\end{itemize}   
For $z \in  \U_{(\alpha-\e^{1/4}),n},$ we define the random variables,
\begin{eqnarray*}
N&=&\sum_{i=1}^{{C_1\e n^2}}\mathbf{1}{(\tau^{i}_z<\tau^{i}_{\rm{IDLA}})}\mbox{: the number of blue walkers that visit $z$ before stopping},\\
M&=&\sum_{i=1}^{{C_1\e n^2}}\mathbf{1}{(\tau^{i}_z<\tau^{i}_{\rm{exit}})}\mbox{: the number of blue walkers that visit $z$ before exiting $\U_{(\alpha-C_2\sqrt \e),n}$},\\
L&=&\sum_{i=1}^{{C_1\e n^2}}\mathbf{1}{(\tau^{i}_{\rm{IDLA}}<\tau^{i}_z <\tau^{i}_{\rm{exit}})}  \mbox{: the number of blue walkers that visit $z$ before exiting} \\
& & \U_{(\alpha-C_2\sqrt \e),n}, \mbox{ but after stopping}.
\end{eqnarray*}
Thus $N\ge M-L.$ 
Hence, 
\begin{equation}\label{keybound}
\mathbb{P}\Bigl(z\notin {{\rm{IDLA}}_{\rm{B}}}( {C_1 \e n^2})\Bigr)=\mathbb{P}(N=0)\le \mathbb{P}(M<a)+\mathbb{P}(L>a),
\end{equation}
where the last inequality holds for any $a.$
Now, by definition,  
\begin{equation}\label{inter1}
\E(M)= C_1 \e n^2 \mathbb{P}_{\mu_{\rm{B}}}(\tau(z)<\tau(\U_n \setminus \U_{(\alpha-C_2\sqrt \e),n})),
\end{equation}
where ${\mathbb{P}}_{\mu_{\rm{B}}}(\cdot)$ denotes the random walk measure started uniformly from $\U_{{\rm{B}},n}.$
We now bound the expectation of $L$.  Define the following quantity: let independent random walks start from each `empty' (red) site i.e $S_{\rm{R}}(\sigma)\cap \U_{(\alpha-C_2 \sqrt \e),n}$ and let, $$\tilde{L}=\sum_{{v\in S_{\rm{R}}(\sigma) \cap \U_{(\alpha-C_2\sqrt \e),n}}}\mathbf{1}{(\tau(z)<\tau(\U_n \setminus \U_{(\alpha-C_2 \sqrt \e),n}), \mbox{ for the walker starting at } v)}.$$
Clearly $L$ is stochastically dominated by $\tilde{L}.$ Hence the RHS of \eqref{keybound} can be upper bounded by $\mathbb{P}(M<a)+\mathbb{P}(\tilde{L}>a)$.
Now
$$\E(\tilde{L})= \sum_{{v\in S_{\rm{R}}(\sigma) \cap \U_{(\alpha-C_2\sqrt \e),n}}} \mathbb{P}_v(\tau(z)<\tau(\U_n \setminus \U_{(\alpha-C_2\sqrt \e),n})).$$
Note that by hypothesis there exists at most $ \e_1 n^2$ empty (red) vertices in $\U_{(\alpha-C_2\sqrt \e),n}.$ 
Since the above sum is over a set of size at most $ \e_1 n^2,$ by Lemma \ref{gflower} we have, 
\begin{align*}
\E(\tilde L) & \le C'\sqrt {\e_1} \frac{n^{2}}{\log n},\\
\E(M) & \ge  \e n^2 \frac{C''}{\log n},
\end{align*}
for some constants $C',C'',$ depending on $\e.$  
The first bound follows by summing the bound in Lemma \ref{gflower}, over a set of red points of size $\e_1 n^2$ (this is maximized, when all the points are in a ball of radius $\sqrt{\e_1}$ around $z$).
The latter bound follows since uniformly for any point $z\in  \U_{(\alpha-\e^{1/4}),n},$  the blue random walk starting uniformly from $\U_{{\rm{B}},n}$ has a chance $C_1=C(\e,\U)$ of reaching a point $\e^2$ close to $z$ before exiting $\U_{(\alpha-C_2\sqrt \e),n}.$\footnote{ One can take a tube of  small enough width joining a point in the `interior of'  $\U_n$ (say $10\e$ away from the boundary) to any point $z$ and see that the random walk stays in that tube and hits the end with the point  $z$ with constant probability $C=C(\e,\U)$, independent of $n$. That the random walk starting from $\U_{{\rm{B}},n},$ reaches such an interior point before exiting $\U_{(\alpha-C_2\sqrt \e),n},$ with constant probability, is a straightforward consequence of the heat kernel estimates \cite[Theorem 5.2]{tc1}.}  Now, once the random walker reaches such a point, using Lemma \ref{gflower}, we get a lower bound of $\frac{C''}{\log n}$ of hitting $z$ before exiting $\U_{(\alpha-C_2\sqrt \e),n}$.  
We now choose $\e_1$ small enough so that $\E(\tilde L) \le \E(M)/4.$
Choosing $a=\E(M)/2$, and using Azuma's inequality for sums of indicator variables we get,
\begin{eqnarray*}
\mathbb{P}(\tilde{L}>a) \le  \exp(-dn)\,\,\text{and}\,\,\mathbb{P}(M<a)\le  \exp(-dn),
\end{eqnarray*}
for some constant $d>0$ (independent of $n$).
Thus in \eqref{keybound} we get,
$$\mathbb{P}(M<a)+\mathbb{P}(L>a)\le 2\exp(-dn).$$ 
The proof of the lower bound now follows by taking an union bound over $O(\e_1 n^2)$ red vertices $z \in  \U_{(\alpha-{\e}^{1/4}),n}.$ 
\end{proof}
\subsection{Proof of Lemma \ref{main1}}
For any set $A\subset \Omega$ define the positive return time $\tau^{+}(A)$ to be $\inf\{t\ge 1 : \varsigma_t \in A\}.$
We break the proof into a couple of lemmas. 
\begin{lem}\label{hitlemma1} Given positive numbers $\e,c,$ there exists  $\e'>0,$ such that for all small enough $\dd$ (blob size), there exists $D=D(\e,c, \dd)>0,$ such that for any $\sigma\in \Omega_{(\e)}\cap \Gamma_{\e'},$
\begin{equation}\mathbb{P}_{\sigma}(\tau^+(\Omega_{(\e)})\ge c n^2)<e^{-Dn^2},
\end{equation}
for all large enough $n=2^m.$
\end{lem}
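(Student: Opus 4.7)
The idea is that $w(\cdot)$ is a submartingale with positive drift $a(\e)$ whenever the configuration lies outside $\Omega_{(\e)}$ (Theorem \ref{negdrift123}), while on $\Gamma_{\e'}$ the weight already sits within $2\e' n^2$ of the ceiling $w_{\max}$. If the chain stays outside $\Omega_{(\e)}$ for $cn^2$ steps, the drift would force $w(\sigma_t)$ to rise by roughly $a(\e) c n^2$, which cannot happen if $\e'$ is chosen small enough compared to $a(\e) c$. Azuma-type concentration then upgrades this deterministic comparison to an $e^{-Dn^2}$ probability bound.

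\textbf{Choice of constants.} Let $a = a(\e) > 0$ be the drift constant produced by Theorem \ref{negdrift123}, and let $A_2 = C|\log\dd|$ be the one-step oscillation bound on $w$ coming from Lemma \ref{maxbnd1}. Choose $\e' = \e'(\e,c)$ so that $2\e' < ac/4$; this will be the $\e'$ in the statement.

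\textbf{Reduction to a hitting-time estimate.} Since $\sigma_0 \in \Omega_{(\e)}\cap \Gamma_{\e'}$ but the return time $\tau^+$ is strictly positive, we condition on the first step and write
\begin{equation*}
\mathbb{P}_{\sigma_0}\bigl(\tau^+(\Omega_{(\e)}) \ge cn^2\bigr)
\ \le\ \mathbb{E}_{\sigma_0}\!\left[\mathbb{P}_{\sigma_1}\bigl(\tau(\Omega_{(\e)}) \ge cn^2-1\bigr)\right].
\end{equation*}
On the event in question, $\sigma_1 \notin \Omega_{(\e)}$, and by the one-step bound $|w(\sigma_1)-w(\sigma_0)|\le A_2$, so $w(\sigma_1)\ge w_{\max} - (2\e' n^2 + A_2)$.

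\textbf{Application of Lemma \ref{lemm:azuma1}\,i.} I would now invoke part i of Lemma \ref{lemm:azuma1} exactly as in the proofs of Lemmas \ref{hitting} and \ref{goodbad}, taking
\begin{equation*}
B = \Omega_{(\e)},\qquad A_1 = w_{\max},\qquad a_1 = a(\e),\qquad a_2 = 2\e' n^2 + A_2,\qquad T = cn^2-1.
\end{equation*}
The boundedness hypothesis \eqref{hyp11} follows from Lemma \ref{unibounded}, the increment hypothesis \eqref{hyp22} from Lemma \ref{maxbnd1}, and the drift hypothesis \eqref{submart1} from Theorem \ref{negdrift123}. With our choice of $\e'$ we have $a_2 - a_1 T = (2\e' - ac)n^2 + O(\log\dd) < 0$ for all large enough $n$, and in fact $|a_2-a_1T| \ge (ac/2)n^2$. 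Thus
\begin{equation*}
\mathbb{P}_{\sigma_1}\bigl(\tau(\Omega_{(\e)}) \ge cn^2-1\bigr)\ \le\ \exp\!\Bigl(-\tfrac{(ac/2)^2 n^4}{4 A_2^2 (cn^2-1)}\Bigr)\ \le\ e^{-Dn^2}
\end{equation*}
for $D = D(\e,c,\dd) := a^2 c / (32 C^2 |\log\dd|^2)$. Averaging over $\sigma_1$ gives the stated bound.

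\textbf{Potential obstacle.} The reasoning is routine given the machinery already in place; the only point that needs a moment of care is the discrepancy between $\tau^+$ (positive return time) and $\tau$ (hitting time). This is handled cleanly by conditioning on $\sigma_1$ and absorbing the resulting one-step slack of size $A_2 = O(|\log\dd|)$ into $a_2$, which is negligible compared to $2\e' n^2$ for large $n$. Consequently the argument goes through without modifying any of the earlier estimates.
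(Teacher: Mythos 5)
Your argument is correct and follows essentially the same route as the paper's proof: both condition on the first step of the chain (handling the $\tau^+$ vs.\ $\tau$ distinction), observe that $w(\sigma_1)$ is within $O(\e' n^2)$ of $w_{\max}$, and then apply Lemma~\ref{lemm:azuma1}~i.\ with $B = \Omega_{(\e)}$, drift constant $a_1 = a(\e)$ from Theorem~\ref{negdrift123}, increment bound $A_2 = O(|\log\dd|)$, and the smallness condition on $\e'$ relative to $a(\e)c$ to force $a_2 - a_1 T < 0$. Your bookkeeping of the $O(|\log\dd|)$ one-step slack inside $a_2$ and the use of $T = cn^2 - 1$ are slightly more careful than the paper's, but the choice of constants, the appeal to the same lemmas, and the resulting $e^{-Dn^2}$ bound are the same in substance.
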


\begin{proof} 
The proof follows from Lemma \ref{lemm:azuma1} i. Recalling that $\varsigma_t$ denotes the competitive erosion chain with $\varsigma_0=\sigma$, 
if $\varsigma_1 \in \Omega_{(\e)}$ then $\tau^+(\Omega_{(\e)})=1$. 
Otherwise consider the process $\chi(t)=\varsigma_{t+1}$.
and  let $g= {\rm{W}}(\cdot)$ be the weight function defined in \eqref{wf}. 
We now make the following choice of parameters to apply Lemma \ref{lemm:azuma1} i:
\begin{align*}
\cB &= \Omega_{(\e)},\, a_1 = {\rm{W}}_{\max}, \,a_2 = \log(1/\dd),\\
a_3 &= a(\e)\mbox{ appearing in Theorem }\ref{negdrift123},\\
a_4  & = \e' n^2 \mbox{ with }\e' = \frac{a_3c}{2}  \mbox{ where } c \mbox{ appears in the hypothesis},\\
T &= cn^2.
\end{align*}
The choice of $a_4$ works since $\sigma\in \Gamma_{\e'}$ by hypothesis.
Also the choice of $\e'$ ensures that $a_4-a_3T <0$ and hence the hypothesis of Lemma \ref{lemm:azuma1} i. is satisfied. 
Thus for $\e'=\frac{a_3c}{2},$ by Lemma \ref{lemm:azuma1} i. for all $\sigma \in \Omega_{(\e)}\cap \Gamma_{\e'}$ 
$$\mathbb{P}_{\varsigma_1}(\tau(\Omega_{(\e)})\ge T)\le \exp\left({-D n^2}\right),$$ for some $D=D(\e,c,\dd).$
Since $\tau^{+}(\Omega_{(\e)})$ starting from $\sigma$ is one more than $\tau(\Omega_{(\e)})$ starting from $\varsigma_1,$ we are done.
\end{proof}

\begin{lem}\label{finalargument1}For all small $\e>0,$ there exists $\e'>0,$ such that for all small enough $\dd$ (blob size), there exists a constant $b(\dd,\e),$  such that for large enough $n=2^m,$ and any $\sigma\in \Omega_{(\e)}\cap \Gamma_{\e'}\cap \mathcal{G}_{{\e}^{1/4}},$
\begin{equation*}\mathbb{P}_{\sigma}(\tau^+(\Omega_{(\e)})\le \tau(\Omega \setminus \mathcal{G}_{{\e}^{1/4}}) )>1-e^{-bn}.
\end{equation*}
\end{lem}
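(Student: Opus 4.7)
The strategy is to combine the quick-return estimate for $\Omega_{(\e)}$ from Lemma \ref{hitlemma1} with the IDLA growth estimate of Theorem \ref{IDL1}: the first controls how fast the process returns to $\Omega_{(\e)}$, the second guarantees that on the much shorter timescale $cn^2$ the blue and red clusters cannot grow far enough to leave $\cG_{\e^{1/4}}$.

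First I would apply Lemma \ref{hitlemma1} with parameters $\e$ and a small constant $c>0$ (to be pinned down below): this yields some $\e'=\e'(\e,c)>0$ such that for all $\sigma\in\Omega_{(\e)}\cap\Gamma_{\e'}$,
\begin{equation*}
\mathbb{P}_\sigma\bigl(\tau^+(\Omega_{(\e)})>cn^2\bigr)\le e^{-Dn^2}.
\end{equation*}

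Next I would bound the probability of leaving $\cG_{\e^{1/4}}$ during those $cn^2$ steps using IDLA domination. Because $\sigma\in\cG_{\e^{1/4}}\cap\Omega_{(\e)}$, the initial red set $B_{2}(\sigma)$ is contained in $\U_n\cap\U_{(\alpha-\e^{1/4})}^{c}$, and augmenting it to $A_0:=B_2(\sigma)\cup(\U_n\cap\U_{(\alpha)}^{c})$ produces an initial configuration of precisely the form assumed in Theorem \ref{IDL1}, with ``extra strips'' parameter $\e$ (using $\sigma\in\Omega_{(\e)}$, so the wrong-color red in $\U_{(\alpha)}$ is confined to at most $\e n$ strips). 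The set of vertices that have been red at some time in the first $cn^2$ erosion steps is dominated by the IDLA aggregate $A_{cn^2}$ grown from $A_0$, via the same domination invoked in the proof of Lemma \ref{linehit2}. Choosing $c$ small enough that $C\sqrt{c}<\e^{1/4}$ (so $\U_{(\alpha-\e^{1/4})}\subset\U_{(\alpha-C\sqrt{c})}$), and noting that $A_0\cap\U_{(\alpha-\e^{1/4})}=\emptyset$ (since $B_2(\sigma)\subset\U_{(\alpha-\e^{1/4})}^{c}$ and $\U_{(\alpha-\e^{1/4})}\subset\U_{(\alpha)}$), Theorem \ref{IDL1} gives
\begin{equation*}
A_{cn^2}\setminus A_0\subset\U\setminus\U_{(\alpha-C\sqrt{c})}\subset\U\setminus\U_{(\alpha-\e^{1/4})},
\end{equation*}
with failure probability at most $e^{-c_1(\sqrt{c}\,n)^{1/3}}=e^{-c_2 n^{1/3}}$. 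Hence no red vertex appears in $\U_{(\alpha-\e^{1/4})}\cap\U_n$ during $[0,cn^2]$ with overwhelming probability. The symmetric application to blue walkers from $\U_{1,n}$ shows that no blue vertex appears outside $\U_{(\alpha+\e^{1/4})}\cap\U_n$ in $cn^2$ steps, with the same tail. A union bound over the two IDLA events and the event of Lemma \ref{hitlemma1} gives $\{\tau^+(\Omega_{(\e)})\le cn^2\}\cap\{\tau(\Omega\setminus\cG_{\e^{1/4}})>cn^2\}$ with probability at least $1-e^{-bn^{1/3}}$, which is the required inequality.

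The main obstacle is the IDLA domination step. Competitive erosion is not monotone (blue walkers can re-blue previously red vertices), so the naive ``current red cluster is contained in an IDLA aggregate'' is false; what holds and what is needed is that the monotone set of vertices ever red in the first $k$ CE steps is contained in the IDLA cluster grown from $A_0$. Establishing this cleanly requires a careful coupling of the random-walk trajectories used by the red walkers with those used by IDLA walkers, together with an inductive argument on the current blue region, following the same template as in the proof of Lemma \ref{linehit2}. The small exponent $n^{1/3}$ in the failure bound is inherited directly from Theorem \ref{IDL1}.
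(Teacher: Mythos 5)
Your proposal matches the paper's proof: both combine Lemma \ref{hitlemma1} (giving a quick return to $\Omega_{(\e)}$ on a timescale $cn^2$ with doubly-exponential failure probability) with Theorem \ref{IDL1} (showing the aggregate cannot leave $\cG_{\e^{1/4}}$ on that timescale, with failure $e^{-\Theta(n^{1/3})}$), and finish with a union bound. You are somewhat more explicit than the paper about the precise sense in which IDLA dominates erosion (the set of ever-red vertices, not the current red set) and about the symmetric blue-side estimate, but these are clarifications of the same argument rather than a different route.
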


\begin{proof} Since $\sigma \in \Omega_{(\e)}\cap \mathcal{G}_{{\e}^{1/4}},$ by Theorem \ref{IDL1} (since IDLA dominates competitive erosion as mentioned before),
\begin{equation*}
\mathbb{P}_{\sigma}(\tau(\Omega\setminus \mathcal{G}_{{\e}^{1/4}})\ge   d n^2)\ge 1-e^{-hn},
\end{equation*}
for some $d,h$ depending on $\e,\dd.$
Now using $c=d/2$ in Lemma \ref{hitlemma1},
we get that we can choose $\e'$ such that for any $\sigma\in \Omega_{(\e)}\cap \Gamma_{\e'}\cap \mathcal{G}_{{\e}^{1/4}},$
\begin{equation}\label{quadreturn}
\mathbb{P}_{\sigma}(\tau^+(\Omega_{(\e)})\ge \frac{d n^2}{2})<e^{-Dn^2},
\end{equation}
for some positive constant $D$.
Thus for such an $\e'$ 
\begin{eqnarray*}
\mathbb{P}_{\sigma}(\tau^+(\Omega_{(\e)})\le \tau(\Omega \setminus \mathcal{G}_{{\e}^{1/4}}) )& \ge & \mathbb{P}_{\sigma}(\tau^+(\Omega_{(\e)})\le \frac{d n^2}{2} )-\mathbb{P}_{\sigma}(\tau(\Omega\setminus \mathcal{G}_{{\e}^{1/4}})\le d n^2),\\
 &\ge & 1-e^{-hn}-e^{-Dn^2}.
\end{eqnarray*}
Hence we are done by choosing $b=\frac{h}{2}$.
\end{proof}

\begin{proof}[Proof of Lemma \ref{main1}]
We will specify $\e'$ later. However notice that for any small enough $\e',$ if
 $\sigma\in \Gamma_{\e'/2},$ then by Lemma \ref{goodbad}, there exist  positive constants $c,d$ depending on $\e'$ and the blob size $\delta,$ such that for large enough $n=2^m,$ 
\begin{equation}\label{ht33}
\mathbb{P}_{\sigma}(\tau(\Omega \setminus \Gamma_{\e'}) >e^{cn^2})  \ge  1- e^{-{d}n^2}.
\end{equation}

Now notice that by hypothesis $\sigma\in \mathcal{G}_{\e},$ 
and hence $\sigma \in \Omega_{(c\e)},$ for some constant $c=c(\U),$ by Lemma \ref{rem2}. 
Let $\tau_{(1)},\tau_{(2)}\ldots,$ 
be successive hitting times to $\Omega_{(c\e)}$, of the chain $\varsigma_t$ with $\varsigma_0=\sigma$
i.e.,\ $\tau_{(1)}=0$ and for all $i\ge 0$, 
$$\tau_{(i+1)}=\inf\{t:t>\tau_{(i)},\varsigma_{t} \in \Omega_{(c\e)}\}.$$  

The following containment is true for any positive $b'$: Let $s:=e^{b'n}.$
Then
$$\{\tau(\cG^c_{{\e}^{1/4}})\le s\}\subset \left\{\exists \,i\le s \mbox{ such that }\varsigma_{i}\notin \Gamma_{\e'}\right\}\cup\left(\bigcup_{j=0}^{s}\left\{\bigl\{\tau_{(j)}< \tau(\cG^c_{{\e}^{1/4}})\le \tau_{(j+1)}\bigr\}\cap \bigl\{ \varsigma_{\tau_{(j)}}\in \Gamma_{\e'}\bigr\}\right\}\right).$$
The above follows by first observing whether there exists  any $i<e^{b'n}$ 
such that $\varsigma_{i}\notin \Gamma_{\e'}$. Also, let $j$ be the first index such that  $\tau_{(j)}< \tau({\cG ^c_{{\e}^{1/4}}})\le \tau_{(j+1)}.$
Notice that on the event $\{\tau(\cG^c_{{\e}^{1/4}})\le s\}$ we have $j\le s.$ 
Now by definition on the event $\bigl\{\varsigma_{i}\in \Gamma_{\e'} \mbox{ for all }  i \le s\bigr\}$, we have $\varsigma_{\tau_{(j)}}\in \Gamma_{\e'}\cap \Omega_{(c\e)}\cap \mathcal{G}_{{\e}^{1/4}}.$
Thus by the union bound
\begin{eqnarray*}\mathbb{P}_{\sigma}\bigl(\tau(\cG^c_{{\e}^{1/4}})\le e^{b'n}\bigr)&\le & \mathbb{P}(\exists \,i\le s \mbox{ such that }\varsigma_{i}\notin \Gamma_{\e'})\\ &+&   \sum_{j=1}^{e^{b'n}}\mathbb{P}_{\sigma}\left(\bigl\{\varsigma_{\tau_{(j)}}\in\Gamma_{\e'}\cap \Omega_{(c\e)}\cap \mathcal{G}_{{\e}^{1/4}}\bigr\}\cap \bigl\{\tau(\cG^c_{{\e}^{1/4}})\le \tau_{(j+1)}\bigr\}\right).
\end{eqnarray*}
Recall that by hypothesis $\sigma \in \Gamma_{\e'/2}.$ Thus by \eqref{ht33}, for any small enough $\e'$ there exists $d=d(\e',\dd)>0$ such that for any $b'>0$ the first term is less than $e^{-dn^2}$ for large enough $n$.
Also notice that by Lemma \ref{finalargument1} we can choose $\e'$ such that every term in the sum is at most $e^{-bn}$ for some constant $b>0$. 
Thus for such an $\e'$, putting everything together we get that for any $b'>0$ and large enough $n,$ 
$$\mathbb{P}_{\sigma}(\tau(\cG^c_{{\e}^{1/4}}) \le e^{b'n}) \le e^{-dn^2}+\sum_{i=1}^{e^{b'n}}e^{-bn}.$$
Hence by choosing $b'<b$ we get that for all large enough $n,$ $$\mathbb{P}(\tau(\cG^c_{{\e}^{1/4}})\le e^{b'n})\le 2e^{(b'-b)n}.$$
The proof is thus complete.
\end{proof}

Finally we prove Theorem \ref{IDL1}. The argument will be a  modification of the argument in \cite{hugo}, adapted to our setting (we will follow the same definitions and notation as in \cite{hugo}, to which we refer the interested reader to, for further  details).
Recalling the notation from \eqref{IDLAcrucnot}, for the purposes of this section, we consider a slightly more general process, where the growth of the
aggregate is stopped at certain stopping times,  e.g., upon exiting a set $T$. Starting with an aggregate $S\subset \U_n,$ denote by
$A(S; x\rightarrow T)$, the aggregate obtained by letting a particle doing a random walk starting from $x$, and stopping if it exits $S$ or
pausing it, if it exits $T$, i.e.,  
$A(S; x \rightarrow T) := S \cup \{X_{\tau(\U_n\setminus S) \wedge (\tau(\U_n\setminus T) - 1)}\}$.   To keep track of the position of a paused particle we define
$$P(S; x \to T) = \left\{\begin{array}{cc}
X_{\tau(\U_n\setminus T)} & \text{if } \tau(\U_n\setminus T) \le \tau(\U_n\setminus S), \\
\perp & \text{otherwise}.
\end{array}
\right.
$$
Thus, $\perp$ means that the particle is already €œabsorbed€ in the aggregate.
Given vertices $x_1, \ldots , x_k$ in $\U_n$ and a set $T$, define $A(S; x_1,\ldots , x_k \to T)$ to be the IDLA aggregate formed from an existing aggregate $S,$ by $k$ particles, started at $x_1,x_2,\ldots x_k$ and paused upon exiting $T$. Formally we use induction: $$S_0=S,\quad S_j=A(S_{j-1};x_j \to T),$$ for $j\in\{1,2,\ldots k\},$ and we have, $$A(S;x_1,\ldots,x_k \to T):=S_k.$$  Again, to keep track of paused particles, define $P(S;x_1,\ldots,x_k \to T)$ to be the sequence of particles paused in this process, i.e., if, $$p_j:=P(S_{j-1};x_j \to T) \text{ for } j \in \{1,\ldots, k\},$$ then $$ P(S;x_1,\ldots,x_k \to T) \text{ is the sequence } \{p_j: p_j \neq \perp\}.$$ We recall the \textit{Abelian property} of IDLA: $$A(S;x_1,\ldots,x_k) \text{ has the same distribution as }A(A(S;x_1,\ldots,x_k \to T );P(S;x_1,\ldots,x_k \to T)).$$ For more details about the Abelian property see \cite{lbg} and the references therein. 

To prove Lemma \ref{IDL1}, we need the following  two lemmas from \cite{hugo} adapted to our setting.
Recall the notations  $\U_{\ge a,n}$,  and $\U^{(i),n}$ from \eqref{newnot1}. 
\begin{lem}\label{hugo1}\cite[Lemma 5]{hugo} Fix a compact interval $I \in \R $. There exists positive constants $\e,\e_1,$  such that for all  large enough $n,$ and positive integer $r,$ with $n^{1/6} < r < \e n,$ and $a \in I$, the
following holds: Let $x\in  \U_{<a,n}$ and let $S \subset  \U_{<a+\frac{r}{n},n}$ be such that $|S\cap   \U_{\ge a,n}| \le \e_1 r^2.$ Moreover, let $R$ be a set which intersects at most $\e_1$ fraction of the $\U^{(i),n}$'s in $\U_{\ge a,n} \cap \U_{< a +\frac{r}{n},n}.$ 
Then if  $X=\{X_{0},X_{1},\ldots\}$ is a random walk started at $x,$ and stopped upon hitting $ \U_{\ge a+\frac{r}{n},n}$, then,
$$ \P( {\rm{Trace}}(X) \cap  \left(\U_{\ge a ,n} \setminus \{S \cup R \cup \U_{\ge a +\frac{r}{n},n}\}\right)
\neq \emptyset) \ge  \eta,
$$
for $\eta=\eta(I,\U),$ where ${\rm{Trace}}(X)$ denotes the trace of the random walk path. 
\end{lem}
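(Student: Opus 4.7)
The plan is to adapt the argument of \cite[Lemma 5]{hugo} to our setting using (i) the bi-Lipschitz property of the conformal map $\psi$ (Section \ref{commen1} i.), which renders the band $\U_\beta \setminus \U_{\beta+\frac{r}{n}}$ locally comparable to a rectangular strip of Euclidean width $\Theta(r/n)$ with comparison constants depending only on $I$ and $\U$, and (ii) the convergence of random walk on $\U_n$ to Reflected Brownian motion (\cite{BC}, \cite{wtf}), which yields Harnack-type estimates on discrete hitting distributions. Since $\beta$ ranges over the compact interval $I$, the geodesic $\gamma_\beta$ lies at positive distance from $\{x_1,x_2\}$, so the band contains $\Theta(rn)$ lattice sites of $\U_n$ organized into $\Theta(r)$ shells $\U^{(i)}$ of size $\Theta(n)$ each.

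First I would identify a \emph{good} level $\gamma_{a_{i^*}}$ interior to the band satisfying: (a) both adjacent shells $\U^{(i^*-1)}$ and $\U^{(i^*)}$ are disjoint from $A$, and (b) $|S \cap \gamma_{a_{i^*}}| \le C \e_1 r$ for a suitably large constant $C$. Such $i^*$ exists by a counting argument: since $A$ intersects at most an $\e_1$-fraction of the $\Theta(r)$ shells in the band, at most a $2\e_1$-fraction of levels violate (a); meanwhile $\sum_i |S \cap \gamma_{a_i}| \le |S \cap \U_\beta| \le \e_1 r^2$, so by Markov's inequality the fraction of levels violating (b) is smaller than $1/4$ for $C$ large. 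For sufficiently small $\e_1$ this yields $\Omega(r)$ doubly-good levels interior to the band.

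Second, since $x \notin \U_\beta$ and $\xi$ is stopped only upon entering $\U_{\beta+\frac{r}{n}}$, the walker $\xi$ necessarily crosses $\gamma_{a_{i^*}}$ before being stopped. Let $T$ be the first time $\xi$ enters $\U_{a_{i^*}}$; then $\xi(T) \in \U^{(i^*)}$, and by (a) this lies outside $A$. By convergence of random walk to Reflected Brownian motion and the Harnack principle, the distribution of $\xi(T)$ approximates the continuous harmonic measure on $\gamma_{a_{i^*}}$ seen from $x$ in $\U \setminus \U_{\beta+\frac{r}{n}}$. Since $\gamma_{a_{i^*}}$ is bounded away from $\partial \U$ and from $\gamma_{\beta+\frac{r}{n}}$ (in the hyperbolic sense, with constants depending only on $I,\U$), this measure assigns mass at least $c/n$ to each lattice site on a constant-length bulk portion $M \subset \gamma_{a_{i^*}}$ away from $\partial \U$. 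Using $r \le \e n$ and (b), $|S \cap M| \le C\e_1 r \le C\e_1 \e n < |M|/2$ for $\e_1 \e$ small enough, whence
\[
\mathbb{P}\bigl(\xi(T) \in M \setminus S\bigr) \;\ge\; c\,|M \setminus S|/n \;\ge\; \eta
\]
for some $\eta = \eta(I,\U) > 0$. Any such $\xi(T) \in \U^{(i^*)} \setminus (S \cup A) \subset \U_\beta \setminus (S \cup A \cup \U_{\beta+\frac{r}{n}})$, so the desired event occurs with probability at least $\eta$.

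The main technical obstacle will be establishing the uniform Harnack-type lower bound on the discrete hitting measure, with constants depending only on $I$ and $\U$ (not on $n$, $r$, or the starting point $x$). In the continuous setting this follows from the boundary Harnack principle for harmonic measure on a smooth interior curve combined with conformal invariance. Transferring it to the discrete random walk via the local CLT of \cite{wtf}, while maintaining uniformity as $r/n \to 0$ (where the band becomes infinitesimally thin and finite-size corrections to the CLT must be controlled), will require a careful scale-invariance argument at the mesoscopic scale $r/n$ that exploits the bi-Lipschitz property of $\psi$.
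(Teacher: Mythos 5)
Your strategy is genuinely different from the paper's, and it has a gap at the key step that the paper's argument sidesteps entirely. The paper first uses the strong Markov property: it lets $\xi$ run until it first hits the mid-level set $\U_{\beta+r/2n}$, at a random point $y$ whose distribution is never analyzed. It then restricts attention to the ball $B(y,cr/n)$, which for $c$ small enough lies entirely inside the band $\U_\beta\setminus\U_{\beta+r/n}$. This ball contains $\Theta(r^2)$ sites of $\U_n$; at most $\e_1 r^2$ of them lie in $S$, and at most $O(\e_1 r^2)$ lie in shells $\U^{(i)}$ met by $A$ (there are $\Theta(r)$ shells in the band of which at most $\e_1$ fraction meet $A$, each contributing $O(r)$ sites to the ball), so for $\e_1$ small the ball still contains $\Omega(r^2)$ sites outside $S\cup A$. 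A local heat-kernel estimate (\cite[Theorem 5.2]{tc}) then gives a lower bound, uniform in $n$, $r$ and the location of $y$, on the probability that a walk started at $y$ hits a given subset of $\Omega(r^2)$ sites in $B(y,cr/n)$ before leaving $B(y,r/2n)$. Nothing about the law of $y$ on the mid-level is required.

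You instead analyze the first-crossing distribution on a single chosen level $\gamma_{a_{i^*}}$. (Your counting argument for a good $i^*$ is fine, and mirrors the paper's deletion of shells met by $A$.) The gap lies in the claim that this crossing distribution "assigns mass at least $c/n$ to each lattice site on a constant-length bulk portion $M$." That is false when $x$ is close to the band: the crossing measure on $\gamma_{a_{i^*}}$ has spread of order $d(x,\gamma_{a_{i^*}})$, which is $\gtrsim r/n$ but can be $\ll 1$, so for $d(x,\gamma_\beta)=O(r/n)$ the mass concentrates on an arc of only $\Theta(r)$ sites at per-site mass $\Theta(1/r)$, not $c/n$; the subsequent inequality $|S\cap M|\le C\e_1\e n<|M|/2$ also implicitly uses $|M|=\Theta(n)$ and breaks down. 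One can try to patch this by letting $|M|$ track the actual spread (since the spread is $\gtrsim r/n$ one has $|M|\gtrsim r$, and $|S\cap\gamma_{a_{i^*}}|\le C\e_1 r$ still gives $|S\cap M|<|M|/2$), but this still requires a discrete Harnack-type comparison of the crossing distribution to a Cauchy-type kernel, uniformly in $x$, in $n$, and over the mesoscopic scales $r/n\in[n^{-5/6},\e]$, including near $\partial\U$. You rightly flag this as the main obstacle; it does not follow directly from the fixed-time local CLT estimates the paper cites, and the paper does not prove any such bound. The paper avoids this entirely by coupling strong Markov at the mid-level with a local capacity estimate in a small ball, which is the idea missing from your proposal.
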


Observe that $\U_{\ge a'}\subset \U_{\ge a}$ for any $ a' \ge  a.$
Thus the above lemma says that if the size of the intersection of the set $S,$ with $\U_{\ge a,n} \cap \U_{< a+\frac{r}{n},n},$ is not large, then there is a constant chance that the random walk starting from $\U_{< a,n},$ will visit sites in $\U_{\ge a,n} \cap \U_{< a+\frac{r}{n},n},$ which are neither in $S$ nor in $R,$ before hitting $\U_{\ge a+\frac{r}{n},n}.$ 

Note that on the above event, in the IDLA process the random walk will occupy a site in $\U_{\ge a,n} \cap \U_{< a+\frac{r}{n},n},$. This is the type of result which allows us to then bound the growth rate of the IDLA cluster.

After analyzing the behavior of a single particle, we can analyze the behavior of the
whole aggregate. The following lemma says that with high probability, a constant fraction
of the aggregate is absorbed in a wide enough shell. 
\begin{lem}\label{prob123}\cite[Lemma 6]{hugo} Let $I$ be as above. Then, there exists positive constants $c,c_1,c_2,$ and $0<p < 1,$ such that for all $n$ large enough, for all $n^{1/3} < k < c n^2,$ and $x_1, \ldots, x_k \in \U_{< a,n}$, and for all $S \subset  \U_{< a,n}$ where $ a \in I,$ the following is true: for any set $R,$ which intersects at most $c_1 $ fraction of the $\U^{(i),n}$'s in  $\U_{\ge a ,n} \cap \U_{<\gamma,n},$ then, 
$$\P( |A\bigl (S\cup R; x_1,\ldots, x_k \rightarrow \U_{<\gamma,n}\bigr) \setminus S\cup R| \le  c_2 k)
\le  p^k
$$
where $\gamma= a +\frac{\sqrt k}{n}.$
\end{lem}

The above lemma says that if at any time the aggregate is a subset of $\U_{< a,n},$ along with some  $c_1$ fraction of the $\U^{(i),n}$'s in $\U_{\ge a,n} \cap \U_{<\gamma,n}$ then in the next $k$ rounds (as long as $k<cn^2$) a constant fraction of the particles fill up sites in $ \U_{<\gamma,n}$ with high probability. 
Our  hypothesis about the initial cluster in Theorem \ref{IDL1} will allow us to apply the above lemmas.
Before proving the above lemmas we finish the proof of Theorem \ref{IDL1}.
\begin{proof}[Proof of Theorem \ref{IDL1}] Note that $\U_{\ge\beta}=\U_{(\alpha)}$ (see \eqref{areaparam}) and $ \U_{(\alpha-C\sqrt{\e})} \subset \U_{\ge\beta+{C_1 \sqrt{\e}}}$ for some $C_1$ depending on $C$ (due to the same reasoning as in \eqref{order6}).  Thus it suffices to prove that the growth cluster will not touch any new points in $\U_{\ge\beta+{C_1\sqrt{\e}},n}$ for some $C_1.$

 The proof consists of inductively constructing
a sequence of aggregates $A_{(j)}$ by pausing the particles at different distances $m_j$ from the
starting set $\U_{{\rm{R}},n}$. If $k_j$ is the number of paused particles, we choose the next distance $m_{j+1}$, at which
we pause the particles again, in terms of $m_j$ and $k_j.$ We iterate this procedure until there are less than $\e n$
paused particles. Since there are too few particles now, we naively bound the growth of the cluster from this point on.

Define $A_{(j)},m_j,P_j,k_j$ as follows: $A_{(0)}$ is the initial set of filled sites as in the hypothesis of Theorem \ref{IDL1}. Let
 $m_1=\sqrt{\e} n$ and $A_{(1)}$ be the cluster after $\e n^2$ random walks were released uniformly from $\U_{{\rm{R}},n}$ and stopped on hitting $\U_{\ge\beta_1,n}$ where $\beta_1=\beta+\frac{m_1}{n}$. Correspondingly let $P_1$ be the sequence of stopped particles and $k_1=|P_1|$.
We now define $A_{(j)}, P_j$ for all $j>1$. 
For $j>1$ define $m_{j+1}$ in the following way:
(this is where we make the adaptations from \cite{hugo} since in our setting the initial cluster has some particles and hence not all the sites in $\U_{\ge\beta_j,n}$ are empty.) 

For $i=1,2,\ldots$, let $b_i=\beta+\frac{m_{j}+ i\sqrt{k_{j}}}{n}.$
Let $\ell$ be the smallest integer such that  at most $c_1$ fraction of the shells $\U^{(i),n}$ that intersect $\U_{\ge b_{\ell-1},n}\cap \U_{<b_{\ell},n}$, contain particles from the initial cluster $A_{(0)}$ ($c_1$ is the same as in the hypothesis of Lemma \ref{prob123}). We define $m'_j$ and $m_{j+1}$ to be $m_{j}+ (\ell-1) \sqrt{k_{j}}$ and $m_{j}+ \ell \sqrt{k_{j}}$ respectively. 


Let \begin{align*}
A_{(j+1)}&=A(A_{(j)};P_j \rightarrow  \U_{<b_{\ell},n}), \\
P_{j+1} &=P(A_{(j)}; P_j \rightarrow  \U_{<b_{\ell},n}),
\end{align*} and let $k_{j+1}=|P_{j+1}|.$
Also let $J$ be the (random) first time at which $k_j \le \e n$.
We do not stop the particles after this point and hence by construction $A_{(j)}=A_{(J+1)},$ for any $j \ge J+1.$ As mentioned earlier, the Abelian property guarantees that $A_{(J+1)}$  has the same law as the IDLA cluster with $m_0=\e n^2$ particles. We use the crude bound,  $$A_{(J+1)}\setminus A_0 \subset   \U_{<\beta+\beta',n},$$
where $\beta'=\frac{m_J}{n}+ C\frac{(\e'n+\e n)}{n},$
 for some constant $C=C(\U)$. This follows because there are at most $\e n$ 
particles left and initially at most $\e'n$ many shells $\U^{(i),n}$  filled up, and hence the growth from $J$ onwards cannot hit more than  $\e'  n+\e n$ shells $\U^{(i),n}$ (note that the initial cluster $A_0$ by hypothesis can intersect $\U_{\ge\beta+\beta',n},$ but the above argument implies that  no new site added to the aggregate intersects it).
By Lemma \ref{prob123}, for some $c_2>0,$ and $p>0$ (depending only on $\alpha$) 
$$\P[\exists\,\,2 \le j\le J \wedge n\,:\, k_j \ge (1-c_2)^j  k_1 ] \le \P[\exists\,2 \le j\le J \wedge n \,:\, k_j \ge (1-c_2)  k_{j-1} ] = O(n) p^{\e n}.$$
Since $k_1 \le \e n^2$ by hypothesis, for all small enough $\e$ this implies that with probability at least $1-Cn p^{\e n}$ (for some constant $C$ independent of $n$) we have  $J \le n,$  and $k_j$ 
decreases geometrically at each step. Hence,
\begin{align}\label{boundarg2}
m_J& =m_1+ (m'_1-m_1)+\sqrt{k_{1}}+(m'_2-m_2)+\ldots + (m'_{J-1}-m_{J-1})+ \sqrt{k_{J-1}},\\
\nonumber
 & \le m_1+ \sqrt{k_{1}}\frac{1}{1-\sqrt{(1-c_2)}}+ \sum_{\ell=1}^{J-1}(m'_{\ell}-m_{\ell}).
\end{align}
Now we finish by bounding $\sum_{\ell=1}^{J-1}(m'_{\ell}-m_{\ell})$. Notice that by definition at least $c_1(m'_{\ell}-m_{\ell})$ shells intersect the initial cluster. Also by assumption on the initial cluster, number of such shells is at most $\e' n\le \frac{\e n}{D} $. 
Thus $c_1 \sum_{\ell=1}^{J-1}(m'_{\ell}-m_{\ell}) \le  \frac{\e}{D} n,$ and hence choosing $D=\frac{1}{c_1},$ we get that $\sum_{\ell=1}^{J-1}(m'_{\ell}-m_{\ell}) \le  {\e} n.$
Since $k_1 \le \e n^2,$ putting everything together it follows that with  probability at least  $1- np^{\e n}$ we have
$
m_J\le C m_1=C\sqrt{\e}n$ for some absolute constant $C.$ 
Thus $A_{(J+1)}\setminus A_0$ does not intersect $\U_{\ge\beta+C_1 \sqrt{\e},n}$ and we are done.
\end{proof}
We now provide the proofs of Lemmas \ref{hugo1} and \ref{prob123}.
 \begin{proof}[Proof of Lemma \ref{hugo1}] Let $y$ be the first point that the random walk hits, as it exits $\U_{< a+\frac{r}{2n},n}$. Now using the smoothness of $\U,$  uniformly in $y,$ and all small enough $c,$ we have $|B_n(y,\frac{cr}{n})|\ge  \Theta(c^2 r^{2}).$   Now, if $\e_1=\e_1(c) $ is small enough, then
$|B_n(y,\frac{cr}{n}) \setminus \{S \cup A\}|\ge \Theta(c^2 r^{2}).$ 
The result now follows from the heat kernel estimate in \cite[Theorem 5.2]{tc1} which says that, the probability that the random walk started at $y,$ hits the set $B_n(y,\frac{c r}{n})\setminus \{S \cup A\}$ before exiting $B_n(y,\frac{r}{2n})$ is lower bounded by a constant independent of $r,n$.
\end{proof}

\begin{proof}[Proof of Lemma \ref{prob123}] The proof of  \cite[Lemma 6]{hugo} uses \cite[Lemma 5]{hugo}. The same  arguments as in \cite[Lemma 6]{hugo} using  Lemma \ref{hugo1} above, instead of \cite[Lemma 5]{hugo},  completes the proof.
\end{proof}

\appendix\label{app1}
\section{Proofs of some earlier statements}\label{pol}  
In this section we provide  proofs of 
 Lemmas \ref{unibounded}, \ref{closelemm}, \ref{maxbnd1} \ref{rem2}, \ref{unifcon} and \ref{step1} that were omitted in the main text.
\begin{proof}[Proof of Lemma \ref{unibounded}] 
Clearly by \eqref{transformed} and Lemmas \ref{unifcon}, and \ref{step1}, we see that $G_*(z)$ is a bounded continuous function on $\U$ (where the bound is $O(|\log(\dd)|))$.
Approximating integral by Riemann sums, we get, $$\int_{\U}|G_*(z)| \,d\xi d\eta=\lim_{n\rightarrow \infty} \sum_{z_n\in \U_n}\frac{1}{n^2}|G_*(z_n)|,$$ where $z=\xi+i\eta$. 
The proof will then immediately follow from Theorem \ref{convergence}, if we can show that,
$$\limsup_{\dd\rightarrow 0}\int_{\U}|G_*(z)|\, d\xi d\eta < \infty,$$
which follows since,
\begin{eqnarray} \label{unifbound1}
\lim_{\dd\rightarrow 0}\int_{\U}|G_*(z)|\, d\xi d\eta &=&\int_{\U}\frac{64}{\pi}\left|\log \bigl|\frac{\psi(z)-i}{\psi(z)+i}\bigr|\right| d\xi d\eta,\\
\nonumber 
&=& \frac{64}{\pi} \int_{\DD}|\phi'(z)|^2\left|\log \bigl|\frac{z-i}{z+i}\bigr|\right| d\xi d\eta, \\
\nonumber 
&<& \infty.
\end{eqnarray} 
The first equality is a consequence of Lemmas \ref{unifcon} and \ref{step1}. The second equality is just the  change of measure formula. The last inequality holds since  $|\phi'(z)|$ is bounded and the function 
$\log \bigl|\frac{z-i}{z+i}\bigr|$ is an integrable function on the disc.
Thus, we are done.
\end{proof}
\begin{proof}[Proof of Lemma \ref{closelemm}]
Let us fix a configuration where every vertex  in $\U_{(\alpha),n},$ except possibly $O(n)$ many, is colored blue and call it $\sigma_*$. The $O(n)$ correction is needed since every configuration has exactly $\lfloor{\alpha|\U_n|}\rfloor$  many blue vertices.
By Lemma \ref{unifcon}, for all $z \in \overline\U,$  such that $\min(d(z,x_{\rm{B}}),d(z,x_{\rm{R}}))\ge \dd^{a},$ we have, 
\begin{equation}\label{imperr3}
G_*(z)=\frac{64}{\pi}\log\bigl|\frac{\psi(z)-i}{\psi(z)+i}\bigr|+o(1).
\end{equation}
By Lemma \ref{step1}, on the remaining set of measure $O(\dd^{a}),$ one has $|G_*|=O(|\log(\dd)|).$
This along with Theorem \ref{convergence}, implies that given any number $c,$ for small enough $\dd,$ and $n=2^{m}>N(\dd),$  we have, 
\begin{eqnarray}
\nonumber
\left|{\rm{W}}_{\max}-n^2\frac{64}{\pi}\int_{\U_{(\alpha)}}\log\bigl|\frac{\psi(z)-i}{\psi(z)+i}\bigr|\,d\xi d\eta \right| & \le & cn^2,\\
\label{close11}
|{\rm{W}}(\sigma_*)-{\rm{W}}_{\max}|& \le & cn^2.
\end{eqnarray}
From Lemmas \ref{unifcon}, and \ref{step1}, it follows that the function $G_*(z)$ (as $\dd$ goes to $0$) is uniformly integrable i.e., given any $a,$ there exists $b=b(a)$ ($b(a) \to 0$ as $a$ goes to $0$) such that,
$$\limsup_{\dd \to 0} \sup_{A}\int_{A}G_{*}(z)d\xi d\eta\le b,$$ where the supremum is taken over all subsets $A$ of $\U$ of Lebesgue measure at most $a.$ 
Using this and Theorem \ref{convergence}, it follows that:  Given any constant $a,$ there exists a constant $b=b(a)$ such that for all small enough $\dd$ and $n=2^{m}>N(\dd)$:
\begin{equation}\label{error}
\sup_{{A\subset\U_n}\atop |A|\le an^2}\sum_{z_n\in A}|G_n(z_n)|\le bn^2,
\end{equation} where $b=b(a)$ tends to $0,$ as $a$ tends to $0$.     
Thus for any $\sigma\in \cA_{\e},$ by \eqref{error}, 
$|{\rm{W}}(\sigma)-{\rm{W}}(\sigma_{*})|\le b(2\e)n^2.$
Hence we are done by \eqref{close11}.
\end{proof}
\begin{proof}[Proof of Lemma \ref{maxbnd1}] It follows immediately from Lemmas \ref{unifcon}, \ref{step1} and Theorem \ref{convergence}.
\end{proof}

\begin{proof}[Proof of Lemma \ref{rem2}.]
The upper containment follows from \eqref{close}. The lower containment follows from \eqref{imperr3} and Theorem \ref{convergence}.  That $\Omega_{(\e_3)} \subset 
\cA_{\e}$ for some $\e_3=O(\e)$ follows immediately from the definitions. 
\end{proof}
\begin{proof}[Proof of Lemma \ref{unifcon}] Recall that the map $\phi$ is bi-Lipschitz.  Thus it suffices to show that $$\lim_{\dd \rightarrow 0}\sup_{{{z \in \overline \DD:}\atop {d(z,-i)\ge \dd^a}}\atop {d(z,i)\ge \dd^a}}\left|G_*\circ \phi(z)-\frac{64}{\pi}\log \left|\frac{z-i}{z+i}\right|\right|=0.$$
Note that by hypothesis, $\U_{\rm{B}} \subset B(x_{\rm{B}},2\dd)\cap \U,$ and hence, 
\begin{equation}\label{lip12}
\psi(\U_{\rm{B}}) \subset B(-i,C\dd) \cap \DD, 
\end{equation} 
for some constant $C$ (where $C/2$ is the Lipschitz constant of $\psi$). Similar containment holds for $\U_{\rm{R}}$ and $\psi(\U_{\rm{R}}).$ 
Now, notice that by the change of variable formula for ${\rm{C}}\in \{{\rm{B}},\rm{R}\},$
\begin{equation}\label{area1}
\left|\frac{1}{\rm{ area}(\U_{\rm{B}})}\int_{A_{\rm{C}}} \tilde f\circ\phi(\zeta)|\phi'(\zeta)|^2 d\xi d\eta\right|=16.
\end{equation}
These facts along with \eqref{transformed} clearly imply that 
\begin{equation}\label{imperr}
G_*\circ \phi(z)=\frac{64}{\pi}\log\left[\frac{|z-i|+O(\dd)}{|z+i|+O(\dd)}\right],
\end{equation}
 where the constant in the $O(\cdot)$ term depends only on $C$ in \eqref{lip12}. 
 Thus, $$\left|G_*\circ\phi (z)-\frac{64}{\pi}\log \left|\frac{z-i}{z+i}\right|\right|\le \frac{64}{\pi}\left(\left|\log\left(1+\frac{O(\dd)}{|z-i|}\right)\right|+\left|\log\left(1+\frac{O(\dd)}{|z+i|}\right)\right|\right).$$
Since $a<1,$ we have $\displaystyle{\lim_{\dd\to 0}\frac{\dd}{\dd^a}=0}.$ Thus  sending $\dd$ to $0,$ we see that the RHS  uniformly converges to $0$ on the set $\{z\in \overline \DD: \min(|z-i|,|z+i|)\ge \dd^a\}.$ Hence we are done.
\end{proof}

\begin{proof}[Proof of Lemma \ref{step1}] 
We only consider the first case, since the argument for the other case is similar.  Since $a$ will be fixed throughout the proof,  $C=C(a)$ will be a universal constant independent of everything. Assuming  $d(x_{\rm{B}},z)\le \dd^{a},$ and using the bi-Lipschitz property of $\phi,$ it suffices to show that for all $z \in \overline \DD$ such that $d(z,-i)\le \dd^{a},$ $$\frac{a}{C}|\log(\dd)|\le  (G_*\circ \phi)(z) \le C|\log(\dd)|.$$
We first show the above bounds for, 
 \begin{align} \label{eq:step1}
 \frac{1}{\rm{ area}(\U_{\rm{B}})}\frac{1}{\pi}\int_{|\zeta|<1}\bigl(\mathbf{1}(\psi(\U_{\rm{R}}))-\mathbf{1}(\psi(\U_{\rm{B}}))\bigr)(\zeta)|\phi'(\zeta)|^2\log|\zeta-z|d\zeta. 
 \end{align}
Clearly, $$\frac{1}{\rm{ area}(\U_{\rm{B}})}\frac{1}{\pi}\int_{|\zeta|<1}\mathbf{1}(\psi(\U_{\rm{R}}))(\zeta)|\phi'(\zeta)|^2\log|\zeta-z|d\zeta =O(1),$$ since $d(z,\psi(\U_{\rm{R}}))>\frac{d(x_{\rm{B}},x_{\rm{R}})}{2}$ for all small enough $\dd$. 
Since $\psi$ is bi-Lipschitz,  $\psi(\U_{\rm{B}})$ is contained in a ball of radius $C \dd,$ and contains a ball of radius $c\dd$ for some constants $C,c,$ which do not depend on $\dd.$ This implies that ${\rm{area}} (\psi(\U_{\rm{B}}))=\Theta(\dd^2).$  Also,  $|\phi'|$ is  bounded away from $0$ and $\infty$.
Thus, for any $z\in \overline\DD$ we have,  
\begin{eqnarray*}
\left|\frac{1}{\rm{ area}(\U_{\rm{B}})}\frac{1}{\pi}\int_{|\zeta|<1}\mathbf{1}(\psi(\U_{\rm{B}}))(\zeta)|\phi'(\zeta)|^2\log|\zeta-z|d\zeta\right| &=& 
 O\left(\left|\frac{1}{\dd^2}\int_{0}^{C\dd}\int_{|\zeta-z|=r}\log|\zeta-z| d\theta dr\right|\right), \\
     &=& O\left(\left|\frac{1}{\dd^2}\int_{0}^{C\dd}r\log(r)dr\right|\right),\\
     &=& O\left(|\log(\dd)|\right).
\end{eqnarray*}
The lower bound follows immediately since  $d(z,\psi(\U_{\rm{B}}))\le \dd^{a}.$

 We now prove the same bound for
 \begin{align}\label{step2}
 \frac{1}{\rm{ area}(\U_{\rm{B}})}\frac{1}{\pi}\int_{|\zeta|<1}\bigl(\mathbf{1}(\psi(\U_{\rm{B}}))-\mathbf{1}(\psi(\U_{\rm{R}}))\bigr)(\zeta)|\phi'(\zeta)|^2\log|1- \bar \zeta z|d\zeta. 
 \end{align}
Again, it is easy to check that, $$\frac{1}{\rm{ area}(\U_{\rm{B}})}\frac{1}{\pi}\int_{|\zeta|<1}\mathbf{1}(\psi(\U_{\rm{R}}))(\zeta)|\phi'(\zeta)|^2\log|1-  \bar \zeta  z|d\zeta =O(1).$$
To find the order of the term,  $$\frac{1}{\rm{ area}(\U_{\rm{B}})}\frac{1}{\pi}\int_{|\zeta|<1}\mathbf{1}(\psi(\U_{\rm{B}}))(\zeta)|\phi'(\zeta)|^2\log|1- \bar \zeta z|d\zeta,$$
recall from Section \ref{para}, that $d(\U_{\rm{B}},\partial \U)=\Theta(\dd)$ and hence $d(\psi(\U_{\rm{B}}),\partial \DD)=\Theta(\dd).$ 
Thus, for any $\zeta \in \psi(\U_{\rm{B}})$, we have $C\dd \le |1-\bar \zeta  z|.$  Also, since  $d(z,x_{\rm{B}})\le \dd^a$ by hypothesis, $|1-\bar \zeta  z|=O(\dd^a).$ 
Therefore, the sought bounds follow by plugging in the above estimates, and hence by  \eqref{transformed}, we are done.
\end{proof}

\section{ Basic geometric properties of $\U$} \label{RWE}

We finish by including a short discussion of some basic geometric properties of $\U_n$ used before in the article.
\begin{figure}
\centering
\begin{tabular}{cc}
\includegraphics[scale=.3]{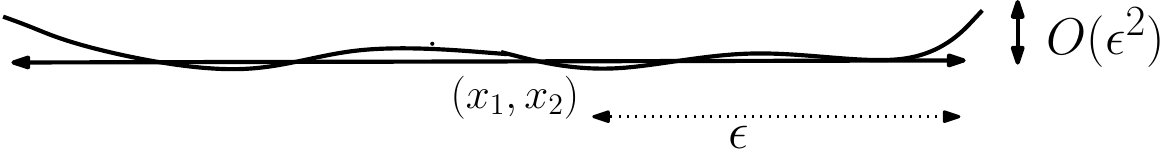}& 
\includegraphics[scale=.4]{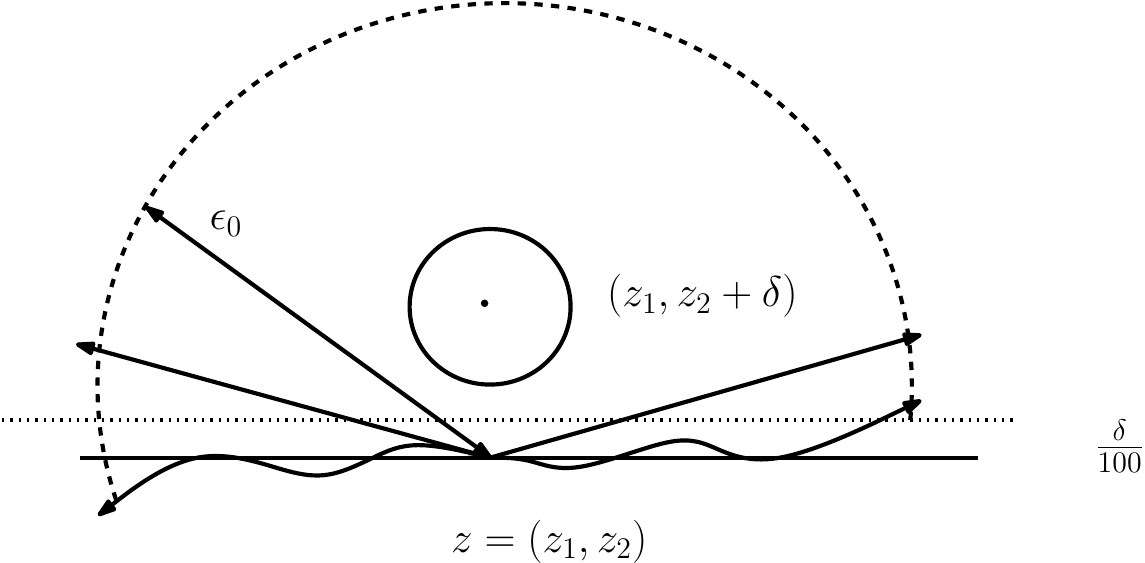} \\
i. & ii.
\end{tabular}
\caption{i. Locally the region near the boundary looks like a half plane. ii. Illustrating the proof of Corollary \ref{ias1}.}
\label{fig.iap}
\end{figure}
The following is a well known locally half plane like property of the boundary of $\U$: since the boundary $\partial \U$ is analytic, there exists a $C>0,$ and an $\e_0,$ such that for all $x=(x_1,x_2)\in \partial \U,$ there exists an orthogonal system of coordinates centered at $x$, such that for all $\e\le \e_0:$ 
\begin{equation}\label{localhalf}
B(x,\e)\cap \U=\left\{ (x_1',x_2') \in B(x,\e): x_1'\in (x_1-\e,x_1+\e),x_2'\ge f(x_1') \right\},
\end{equation}  
 and $|f(x_1')-x_2|\le C|x_1'-x_1|^2.$ The above  is a simple consequence of Taylor expansion up to second order of the curve, locally near $x$.  See Figure \ref{fig.iap} i.
As a simple corollary of the above fact, we see that $\U$ satisfies the following property, which shows that the $y_{\rm{C}}$'s  in Section \ref{para} can indeed be chosen. 
\begin{cor}\label{ias1}
Let $\U$ be as in Section \ref{para}. Then there exists $\dd_0=\dd_0(\U)$ such that for all $x\in \overline{\U}$ and $\dd<\dd_0$ there exists $y\in \U$ such that 
$ d(y,x)\le \dd\,\,\text{and}\,\, B(y,\frac{\dd}{2})\subset  \U.$
\end{cor}
\begin{proof}Choose  $\dd_0\le \frac{\e_0}{4},$ such that $C\dd_0^{2}\le \frac{\dd_0}{100},$ where $\e_0$ and $C$ appear in \eqref{localhalf}. For any $\dd< \dd_0,$ the lemma is immediate if $d(x,\partial \U)>\frac{\dd}{2}$; since, then we can choose $y=x.$ 
Otherwise, let $z=(z_1,z_2)\in \partial \U$ be the closest point on the boundary to $x$. 
Now, in the local coordinate system centered at $z$ as in \eqref{localhalf}, choose $y=(z_1,z_2+\frac{\dd}{2})$.
Then, $d(x,y)\le d(x,z)+d(z,y)\le \dd.$ Also, clearly $B(y,\frac{\dd}{2})\subset \U,$  and hence we are done. See Figure \ref{fig.iap} ii.
\end{proof}

\section*{Acknowledgments}
We are grateful to the anonymous referees for the detailed comments that helped improve the exposition of the paper.
We thank  Krzysztof Burdzy, Zhen-Qing Chen, Wai-Tong Fan, Lionel Levine, and Steffen Rohde for valuable discussions at various stages of this work.
We also thank Gerandy Brito, Christopher Hoffman, Matthew Junge and Brent Werness  for several useful comments.
The work was initiated when S.G. was an intern with the Theory Group at Microsoft Research, Redmond. He thanks the group for its hospitality.

\bibliographystyle{plain}
\bibliography{GFF}

\end{document}